\newtheorem{theorem}{Theorem}[section]
\newtheorem{corollary}[theorem]{Corollary}
\newtheorem{lemma}[theorem]{Lemma}
\newtheorem{proposition}[theorem]{Proposition}
\theoremstyle{definition}
\newtheorem{definition}[theorem]{Definition}
\newtheorem{remark}[theorem]{Remark}
\newtheorem{assumption}[theorem]{Assumption}
\newtheorem{example}[theorem]{Example}
\newenvironment{proofof}[1]{\noindent {\bf Proof of #1.}}{ \hfill\qed\\ }
\newcommand{\To}{\rightarrow}
\newcommand{\orb}{\text{orb}}
\newcommand{\proj}{\text{proj}}
\newcommand{\card}{\text{card}}
\newcommand{\dist}{\mbox{dist}}
\newcommand{\bbr}{\mbox{$\mathbb{R}$}}
\newcommand{\bbz}{\mbox{$\mathbb{Z}$}}
\newcommand{\bbn}{\mbox{$\mathbb{N}$}}
\newcommand{\bbp}{\mbox{$\mathbb{P}$}}
\newcommand{\di}{\text{d}}
\def\S{\mathbb{S}^1}
\def\SP{\mathbb{S}^2}
\newcommand{\dP}{\partial P}
\begin{document}

\title[Homotopical rigidity of polygonal billiards]{Homotopical rigidity of polygonal billiards}

\author{Jozef Bobok}

\author{Serge Troubetzkoy}

\address{Department of Mathematics of FCE\\Czech Technical University in Prague\\
Th\'akurova 7, 166 29 Prague 6, Czech Republic}
\email{bobok@mat.fsv.cvut.cz}

\address{Aix-Marseille Universit\'e\\
Centre de physique th\'eorique\\
Federation de Recherches des Unites de Mathematique de Marseille\\
Institut de math\'ematiques de Luminy\\
Luminy, Case 907, F-13288 Marseille Cedex 9, France}
\email{troubetz@iml.univ-mrs.fr}
\urladdr{http://iml.univ-mrs.fr/{\lower.7ex\hbox{\~{}}}troubetz/} \date{}

\begin{abstract}
Consider two $k$-gons $P$ and $Q$.  We say that the billiard flows in $P$ and $Q$ are homotopically equivalent
if the set of conjugacy classes  in the fundamental group of $P$ which contain a periodic billiard orbit
agrees with the analogous set for $Q$.  We study this equivalence relationship and compare it to the equivalence
relations, order equivalence and code equivalence, introduced in \cite{BT1,BT2}.
In particular we show if $P$ is a rational polygon, and $Q$ is homotopically equivalent to $P$, then
$P$ and $Q$ are similar, or affinely similar if all sides of $P$ are vertical and horizontal.
\end{abstract}\maketitle

\vspace{-0.5cm}
\section{Introduction}
In mathematics one often wants to know if one can reconstruct an
object (often a geometric object) from certain discrete
data, i.e., does rigidity hold.  A well known example is a question posed by Burns and Katok
whether  a negatively
curved surface is determined by its marked length spectrum \cite{BK}.
The ``marked length spectrum'' of a surface $S$ is the function that
associates to each conjugacy class in the fundamental group $\pi_1(S)$ the length of the
geodesic in the class. This question
was resolved positively by Otal \cite{O} and Croke \cite{C}.  See \cite{S} for a nice survey of
rigidity theory.

In this article we consider the same question for polygonal billiard tables (see \cite{MT} for details about polygonal billiards).
More precisely,
fix $P$ a simply connected polygon with $k$-vertices. The billiard in $P$ is
describe by a point mass which moves with unit speed without
friction in the interior of $P$ and reflects elastically from the boundary.
We think of $P$ as having a top and a bottom, when the billiard ball
arrives at the boundary it continues on the other side of $P$. This makes $P$ with the corners removed
into a  punctured sphere with $k$-punctures on the equator $E$ (consisting of the sides of $P$ without the punctures) and a flat metric away from these punctures.
This is often called the pillow case model,
we will denote the punctured sphere by $\bbp$ or more precisely by $(\bbp,E,\{p_1,p_2,\dots,p_k\})$ where $p_i$ denote the removed points and the flat metric is implicit.
We will used the phrase ``arc-side'' to
denote a side of $P$  in the pillowcase model.
The billiard flow $\{T^P_t\}_{t\in\tiny\bbr}$ on $\bbp$ is then
the geodesic flow on the unit tangent bundle $T\bbp$ of this sphere.

  Let $D(P)$ be the set of conjugacy classes
in $\pi_1(\bbp)$ which contain a closed billiard trajectory, the set $D(P)$ is the domain of the marked length spectrum map. Consider a polygon $Q$ with the same number of sides as $P$
and call $Q$ homotopically equivalent to $P$, denoted $Q \equiv_{\text{hom}} P$, if $D(Q) = D(P)$.
We show that if $Q$ is a polygon homotopically equivalent to a rational polygon $P$ (all angles between sides are rational multiples of $\pi$) then
$Q$ is similar to $P$ (or affinely similar if all sides of $P$ are vertical or horizontal) (Corollary \ref{main}).
Thus among the rational billiard metrics on the $k$ punctured sphere the domain of the marked length spectrum
determines the polygon up to similarity (resp.\ affine similarity).

We study the set of free homotopy classes in the $k$ punctured sphere.  Motivated by the billiard case we assign
symbolic codes to each free homotopy class  and show that they these codes are in bijection with the free homotopy classes (Theorem \ref{biject}, Corollary \ref{c:1}, Proposition \ref{p:2}). In Theorem  \ref{compare} we compare the equivalence
relation defined above with the ones defined in \cite{BT1} and \cite{BT2}, order equivalence and code equivalence.
Our main result then follows from the bijection between homotopy classes and codes
established in Corollary \ref{c:1}, and using the techniques  and results of \cite{BT2} and \cite{BT1}.

\section{Homotopy equivalences of closed curves}

For $k\in \bbn$, let a pillowcase $(\bbp,E,\{p_1,p_2,\dots,p_k\})$ be the sphere with the standard topology
and $E$ be a simple closed curve in the sphere with punctures $p_i$ corresponding to parameters  in $\S$ ordered counterclockwisely. We need to identify pillowcases ($\mathbb{P},E,\{p_1,p_2,\dots,p_k\})$  and $(\mathbb{P}',E',\{p'_1,p'_2,\dots,p'_k\})$ with same number of punctures. This is possible owing to the Jordan-Schoenflies Theorem, a sharpening of the Jordan curve theorem.

\begin{theorem}\cite{M77}~\label{JS}A simple closed curve in the plane can be mapped onto a circle by a homeomorphism of the whole of $\bbr^2$.
\end{theorem}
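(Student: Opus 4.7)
The plan is to prove the Jordan--Schoenflies theorem using the classical complex-analytic route built on the Jordan curve theorem, the Riemann mapping theorem, and Carath\'eodory's boundary-extension theorem, finishing with an Alexander-type radial gluing. Let $C\subset\bbr^2\cong\bbc$ be a simple closed curve.

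First I would invoke the Jordan curve theorem to write $\bbr^2\setminus C = U\cup V$, where $U$ is bounded, $V$ is unbounded, both are simply connected, and $\partial U=\partial V=C$. (For $V$, one passes to the Riemann sphere $\hc$ so that $V\cup\{\infty\}$ is a simply connected subset of $\hc$ avoiding at least one point of $U$, which makes both components simply connected proper subsets of $\bbc$.) Next I would apply the Riemann mapping theorem to obtain conformal isomorphisms $\ph_1\colon U\to \bbd$ and $\ph_2\colon V\cup\{\infty\}\to \hc\setminus\ol{\bbd}$, with $\ph_2(\infty)=\infty$. The crucial input is Carath\'eodory's theorem: because $C$ is a Jordan curve, $\ph_1$ and $\ph_2$ extend to homeomorphisms $\ol{\ph_1}\colon\ol U\to\ol{\bbd}$ and $\ol{\ph_2}\colon\ol V\cup\{\infty\}\to\hc\setminus\bbd$, sending $C$ homeomorphically onto $\S$.

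The remaining step is to glue $\ol{\ph_1}$ and $\ol{\ph_2}$ into a single homeomorphism of $\hc$ (hence of $\bbr^2$ after removing $\infty$). The obstruction is that on $C$ the two boundary maps typically disagree: one obtains two parametrizations of $\S$, hence an orientation-preserving homeomorphism $h=\ol{\ph_2}\circ \ol{\ph_1}^{-1}|_{\S}\colon \S\to\S$. I would resolve this with the Alexander trick: extend $h$ to a homeomorphism $H\colon\ol{\bbd}\to\ol{\bbd}$ by $H(re^{i\ta})=r\,h(e^{i\ta})$ for $r\in[0,1]$, and then replace $\ol{\ph_1}$ by $H\circ\ol{\ph_1}$. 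The new map on $\ol U$ now agrees with $\ol{\ph_2}$ along $C$, so the two piece together into a continuous bijection $\Phi\colon\hc\to\hc$ with continuous inverse; deleting $\infty$ yields the required homeomorphism of $\bbr^2$ carrying $C$ to $\S$.

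The main obstacle is really the Carath\'eodory extension step: the Riemann mapping theorem by itself only gives a conformal map of interiors, and it is a nontrivial theorem that local connectedness of the boundary (guaranteed here by $C$ being a Jordan curve) forces the map to extend homeomorphically to the closure. Everything else -- the Jordan curve decomposition, the Riemann mapping theorem, and the Alexander radial extension -- is either standard or elementary once that extension is in hand. An alternative, more combinatorial route (followed by Moise in the cited reference) avoids complex analysis entirely: first prove the theorem for polygonal Jordan curves by induction on the number of vertices via ear-cutting, then approximate an arbitrary Jordan curve uniformly by polygonal ones and take a carefully controlled limit of the resulting homeomorphisms; there the main difficulty shifts to the uniform control needed to ensure convergence to a homeomorphism rather than a mere continuous surjection.
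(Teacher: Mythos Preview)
The paper does not prove this statement at all: Theorem~\ref{JS} is quoted verbatim from Moise's book \cite{M77} with a citation and no proof, and is used only as a black box in Proposition~\ref{p:3}. So there is nothing in the paper to compare your argument against.

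That said, your proposal is a correct outline of the standard complex-analytic proof of the Jordan--Schoenflies theorem, and you have identified the genuine crux (Carath\'eodory's extension) accurately. Two minor remarks. First, you assert that $h=\ol{\ph_2}\circ\ol{\ph_1}^{-1}|_{\S}$ is orientation-preserving; this need not be true for arbitrary choices of $\ph_1,\ph_2$, but it is irrelevant since the Alexander radial extension $H(re^{i\ta})=r\,h(e^{i\ta})$ works for any self-homeomorphism of $\S$. Second, your closing paragraph already notes that Moise's own proof in \cite{M77} follows the combinatorial route (polygonal approximation and limit) rather than the Riemann--Carath\'eodory route, so you are aware that your argument differs from the one in the cited source; for the purposes of this paper either proof is equally serviceable, since only the statement is used.
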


Since any homeomorphism of the plane can be extended to a homeomorphism of the Riemann sphere $\SP$ that is homeomorphic to $\overline{\bbp}=\bbp\cup\{p_1,p_2,\dots,p_k\}$, Theorem \ref{JS} implies the following fact.

\begin{proposition}\label{p:3}For ($\mathbb{P},E,\{p_1,p_2,\dots,p_k\})$  and $(\mathbb{P}',E',\{p'_1,p'_2,\dots,p'_k\})$ there is a homeomorphism $\phi\colon\overline{\bbp}\to\overline{\bbp'}$ such that $\phi(E) = E'$ and $\phi(p_i) = p'_i$ for each $i$.\end{proposition}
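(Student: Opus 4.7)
The plan is to reduce both pillowcases to a standard model via Jordan--Schoenflies, and then to use an elementary ``move the punctures'' homeomorphism of the sphere.

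First I would apply Theorem \ref{JS} to the simple closed curve $E \subset \overline{\bbp}$. Composing with the identification $\overline{\bbp} \cong \SP$ already mentioned in the excerpt, and extending the planar homeomorphism to $\SP$ by sending $\infty \mapsto \infty$, I obtain a homeomorphism $\psi : \overline{\bbp} \to \SP$ with $\psi(E)$ equal to the equator $\mathbb{S}^1 \subset \SP$. The same construction applied to $E' \subset \overline{\bbp'}$ yields a homeomorphism $\psi' : \overline{\bbp'} \to \SP$ with $\psi'(E') = \mathbb{S}^1$. After these reductions the proposition amounts to constructing a self-homeomorphism $h$ of $\SP$ with $h(\mathbb{S}^1) = \mathbb{S}^1$ and $h(\psi(p_i)) = \psi'(p_i')$ for each $i$, because then $\phi := (\psi')^{-1}\circ h \circ \psi$ is the desired map.

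Next I would build $h$. The points $\psi(p_1),\dots,\psi(p_k)$ lie on $\mathbb{S}^1$ in counterclockwise order, and so do $\psi'(p_1'),\dots,\psi'(p_k')$, since both $\psi$ and $\psi'$ either preserve or reverse the counterclockwise order along the equator; in the latter case I precompose one of them with a reflection of $\SP$ fixing the equator so that both agree. I then choose an orientation-preserving homeomorphism $h_0 : \mathbb{S}^1 \to \mathbb{S}^1$ with $h_0(\psi(p_i)) = \psi'(p_i')$ (for example, piecewise affine on the arcs between consecutive marked points in the angular parameter). Finally I extend $h_0$ to each of the two closed hemispheres by the radial Alexander trick: identifying a closed hemisphere with the closed unit disk in polar coordinates $(r,\theta)$, set $h(r,\theta) = (r, h_0(\theta))$. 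These extensions agree on the equator and therefore glue to a homeomorphism $h : \SP \to \SP$ with $h(\mathbb{S}^1) = \mathbb{S}^1$ and the required action on the punctures.

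There is no real obstacle: Jordan--Schoenflies does all the heavy topological lifting, and the only subtlety is matching the cyclic order of punctures. The mildly delicate point is handling orientation: in principle one of $\psi$, $\psi'$ could reverse the counterclockwise ordering of the punctures, but this is repaired by composing with an equator-preserving reflection of $\SP$, after which the existence of $h_0$ on $\mathbb{S}^1$ and its Alexander-trick extension to $\SP$ are standard.
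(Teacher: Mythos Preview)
Your proof is correct and follows essentially the same route as the paper: reduce both pillowcases to $(\SP,\S)$ via Jordan--Schoenflies, then build a self-homeomorphism of $\SP$ preserving $\S$ and matching the marked points, and compose. You supply more detail than the paper does (the explicit piecewise-affine $h_0$ on $\S$, the Alexander-trick extension to the hemispheres, and the orientation fix), whereas the paper simply asserts the existence of the intermediate homeomorphism $\tau$ from the counterclockwise ordering hypothesis.
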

\begin{proof}By Theorem \ref{JS} we can consider a homeomorphism $\psi\colon\overline{\bbp}\to\SP$, resp.\ $\psi'\colon\overline{\bbp'}\to\SP$  such that $\psi(\overline{E}) = \S$, resp.\  $\psi'(\overline{E'}) = \S$. Since the punctures $\{p_1,p_2,\dots,p_k\}$, resp.\ $\{p'_1,p'_2,\dots,p'_k\}$ are parametrized counterclockwisely, there also exists a homeomorphism $\tau$ of $\SP$ such that $\tau(\S)=\S$ and $\tau(\{p_i\})=\psi'(\{p'_i\})$ for all $i$. Clearly the map $\phi=(\psi')^{-1}\circ\tau\circ\psi$ is a homeomorphism with the required properties.\end{proof}

A {\it pillowcase code} $\sigma$  is a finite sequence $\sigma = (\sigma_0 \sigma_1 \cdots \sigma_{2n-1})$ with $\sigma_i \in \{1,2,\dots,k\}$ of even length
such that $\sigma_i \not = \sigma_{i+1}$ (througout this section pillowcase code subscripts will be taken modulo the length of the code, here $2n$).
We call two pillowcase codes $\sigma = (\sigma_0 \sigma_1 \cdots \sigma_{2n-1})$ and  $\sigma'= (\sigma'_0 \sigma'_1 \cdots \sigma'_{2m-1})$ equivalent if
$m=n$ and there is a $j$ so that $\sigma_{i+j} = \sigma'_i$ for all $i$.
It is easy to see that this is an equivalence relation which we denote $\sigma \equiv \sigma'$.

The main result of this section is the following theorem.

\begin{theorem}\label{biject} Fix $k \ge 3$, and a pillowcase $(\bbp,E,\{p_1,p_2,\dots,p_k\})$.
There is a bijection between the set of free homotopy classes of closed curves on the $k$-punctured sphere
and  the set of equivalence classes of pillowcase codes.
\end{theorem}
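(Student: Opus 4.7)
The plan is to reduce the statement to a standard combinatorial bijection by passing to a dual graph homotopy equivalent to $\bbp$. Let $\Gamma$ be the multi-graph with two vertices $v^+,v^-$ joined by $k$ parallel edges $e_1,\dots,e_k$. Define a continuous map $f\colon\bbp\to\Gamma$ that collapses the open top hemisphere of the pillowcase to $v^+$, the open bottom hemisphere to $v^-$, and sends each open arc-side $\alpha_j\subset E$ monotonically onto the edge $e_j$ (with the endpoints $p_{j-1},p_j$ of $\alpha_j$ going to $v^+$ and $v^-$). By construction, if a closed curve $\gamma$ on $\bbp$ is in general position with respect to $E$, then $f\circ\gamma$ is a closed edge-walk in $\Gamma$ whose $i$-th edge is $e_{\sigma_i}$, where $\sigma_i$ is the index of the arc-side crossed at the $i$-th intersection of $\gamma$ with $E$.

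The main step is to show that $f$ is a homotopy equivalence. Both $\bbp$ and $\Gamma$ are Eilenberg--MacLane spaces of type $K(\pi,1)$ --- the $k$-punctured sphere because it is a noncompact surface and hence has contractible universal cover, and $\Gamma$ because it is a graph --- so by Whitehead's theorem it is enough to verify that $f_{*}\colon\pi_1(\bbp)\to\pi_1(\Gamma)$ is an isomorphism. Both groups are free of rank $k-1$. A small loop encircling a puncture $p_i$ crosses exactly the two arc-sides $\alpha_{i-1},\alpha_i$ adjacent to $p_i$, so $f$ maps it to the length-two walk across $e_{i-1}$ and $e_i$; tracking these images one checks that the standard generating system of $\pi_1(\bbp)$ is carried to a generating system of $\pi_1(\Gamma)$. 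A surjection between free groups of equal finite rank is an isomorphism, so $f_{*}$ is an isomorphism and $f$ is a homotopy equivalence.

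A homotopy equivalence induces a bijection between sets of free homotopy classes of closed curves, so it remains to identify those on $\Gamma$ with pillowcase codes. In any graph, free homotopy classes of closed curves are in canonical bijection with cyclically reduced closed edge-walks modulo cyclic rotation, since conjugacy classes in a free group are represented uniquely by cyclically reduced words up to cyclic rotation. In the bipartite multi-graph $\Gamma$, every closed walk has even length $2n$, is determined by its cyclic sequence of edges $(e_{\sigma_0},\dots,e_{\sigma_{2n-1}})$, and is cyclically reduced exactly when $\sigma_i\neq\sigma_{i+1}$ for every $i\bmod 2n$ (``backtracking'' on a single parallel edge is the only possibility here). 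These cyclic sequences modulo rotation are by definition the equivalence classes of pillowcase codes, so the bijection on free homotopy classes identifies those on $\bbp$ with equivalence classes of pillowcase codes.

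The main obstacle is the verification that $f$ is a homotopy equivalence; everything downstream is formal combinatorics of walks in graphs. Hidden in this step is the topological fact that each closed hemisphere minus its boundary punctures is contractible, which legitimates collapsing the hemispheres to vertices; geometrically, this contractibility is also what underlies the reduction of consecutive equal letters $\sigma_i=\sigma_{i+1}$ on the $\bbp$-side, since such a repeat corresponds to a puncture-free bigon in one hemisphere that can be pushed across $E$ to eliminate two crossings.
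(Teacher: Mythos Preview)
Your approach is genuinely different from the paper's and, once repaired, more conceptual. The paper works entirely inside $\bbp$: it introduces \emph{nice} curves (those meeting $E$ transversally, alternating hemispheres, with no consecutive repeated arc-side), shows every nontrivial closed curve is homotopic to a nice one (Proposition~\ref{nicerep}), and then proves directly that two nice curves are freely homotopic if and only if their codes agree up to cyclic rotation (Proposition~\ref{code-homotop}), the forward implication via an open--closed supremum argument along the homotopy using an \emph{admissibility} condition (Definition~\ref{def}, Lemma~\ref{l:1}). Your route through the theta-graph $\Gamma$ and the combinatorics of cyclically reduced words in a free group is shorter and places the result in its natural algebraic-topological setting; the paper's argument, by contrast, is self-contained and keeps the link to billiard codes completely explicit.

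That said, the map $f$ as you describe it does not exist. If the open top hemisphere is sent to $v^+$ and the open bottom hemisphere to $v^-$, then for any point $x$ on an arc-side one can approach $x$ from either hemisphere, and continuity forces $f(x)=v^+$ and $f(x)=v^-$ simultaneously; so $f$ cannot send an arc-side onto the interior of an edge. (Also, the punctures $p_i$ are not points of $\bbp$, so prescribing their images is vacuous, and in any case adjacent arc-sides share a puncture, so your rule would send that shared endpoint to both $v^+$ and $v^-$.) The standard fix is to go the other way: embed $\Gamma$ in $\bbp$ dually by placing $v^{\pm}$ in the open hemispheres and drawing an edge $e_j$ from $v^+$ to $v^-$ that crosses the arc-side $\alpha_j$ exactly once. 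Then $\bbp\setminus\Gamma$ is a disjoint union of $k$ once-punctured open disks, each of which deformation retracts onto its boundary, giving a deformation retraction $r\colon\bbp\to\Gamma$. This is the homotopy equivalence you need; under $r$, a nice curve with code $(\sigma_0,\dots,\sigma_{2n-1})$ is carried (up to homotopy in $\Gamma$) to the closed walk with edge sequence $(e_{\sigma_0},\dots,e_{\sigma_{2n-1}})$, and your final paragraph then applies verbatim.
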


To  prove the theorem we need a series of lemmas.
\begin{lemma}\label{l:2} Fix $\{q_1,q_2,\dots,q_m\} \subset  \bbr^2$ and let $S=\bbr^2\setminus\{q_1,q_2,\dots,q_m\}$ be equipped with the standard topology, i.e., the restriction of the topology of
$\bbr^2$. Assume that $\gamma\colon\S\to S$ is a  homotopically trivial curve in $S$. Then $\{q_1,q_2\dots,q_m\}
\subset  U_{\gamma}$, where $U_{\gamma}$ is the unique unbounded component of $S\setminus \gamma$.  \end{lemma}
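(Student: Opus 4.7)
The plan is to derive a contradiction by computing the winding number of $\gamma$ around a prospective interior puncture in two incompatible ways. The argument works directly when $\gamma$ is a simple closed curve, which is the natural reading of the statement in view of the Jordan-Schoenflies theorem (Theorem~\ref{JS}) invoked in the surrounding material.

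First I will apply the Jordan curve theorem to $\gamma(\S)\subset\bbr^2$, obtaining a decomposition $\bbr^2\setminus\gamma(\S)=V\sqcup W$ with $V$ bounded and $W$ unbounded. Since deleting finitely many points from a connected open subset of the plane preserves connectedness, the unique unbounded component $U_\gamma$ of $S\setminus\gamma(\S)$ coincides with $W$ after removing those $q_j$ that happen to lie in $W$; in particular, the conclusion ``$q_j\in U_\gamma$'' amounts to asserting $q_j\in W$, so it suffices to show that $V$ contains no puncture.

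Next I will suppose $q_i\in V$ for contradiction. By Theorem~\ref{JS} there is a self-homeomorphism $h$ of $\bbr^2$ carrying $\gamma(\S)$ to the unit circle and $V$ to the open unit disc. Transporting $\gamma$ by $h$ reduces the winding number calculation to that of a parametrization of the unit circle around a point inside it, giving $\mathrm{wind}(\gamma,q_i)=\pm 1$ (winding number is invariant up to sign under planar homeomorphisms). On the other hand, the null-homotopy hypothesis yields a continuous extension $F\colon D^2\to S$ of $\gamma$; since $q_i\notin S$, this $F$ avoids $q_i$, so $F$ is in fact a null-homotopy of $\gamma$ in $\bbr^2\setminus\{q_i\}$, forcing $\mathrm{wind}(\gamma,q_i)=0$. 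The two computations contradict each other, so $V$ contains no puncture and every $q_j$ lies in $W$, as required.

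The step I expect to be the crux is the geometric evaluation of the winding number to $\pm 1$: once Theorem~\ref{JS} has straightened $\gamma(\S)$ to a round circle this is transparent, but the reduction via Jordan-Schoenflies is the essential topological input. The homotopy invariance of winding numbers and the fact that removing finitely many points from an open connected subset of the plane does not disconnect it are both standard and will not cause difficulty.
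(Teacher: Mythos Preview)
Your argument is sound once $\gamma$ is assumed to be an embedding, but that hypothesis is not in the statement and is not the intended reading: the lemma is invoked (through Lemma~\ref{l:3}, in the proof of Lemma~\ref{l:1}) for the closed curves assembled in Definition~\ref{def}(iii)--(iv), which are concatenations of pieces of $\delta$, of $\gamma$, and of arcs on the equator and have no reason to be simple. The appearance of Theorem~\ref{JS} nearby concerns only the equator $E$, not the curve $\gamma$ of the lemma, so it does not support restricting to simple $\gamma$.

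The paper's proof uses neither the Jordan curve theorem nor winding numbers. It fixes a null-homotopy $H\colon [0,1]\times\S\to S$ from $\gamma$ to a constant map and, assuming some $q_i\notin U_\gamma$, sets
\[
s_0=\sup\{\,s\in[0,1]:\ q_i\notin U_{H(s,\cdot)}\,\},
\]
then argues that at this threshold parameter one must have $q_i=H(s_0,t_0)$ for some $t_0\in\S$, contradicting the fact that $H$ takes values in $S=\bbr^2\setminus\{q_1,\dots,q_m\}$. This is a direct continuity argument along the homotopy, with no Jordan--Schoenflies straightening.

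Under your extra simplicity hypothesis the winding-number computation is correct and pleasantly clean, but it does not cover the general case the paper needs. The very step you flag as the crux---obtaining $\mathrm{wind}(\gamma,q_i)=\pm 1$---is exactly what breaks for non-simple $\gamma$: a point in a bounded component of $\bbr^2\setminus\gamma(\S)$ can have winding number $0$, so the contradiction evaporates.
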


\begin{proof}Assume that $q_i \notin U_{\gamma}$ for some $i$. For a point $p\in S$ consider a homotopy $H=H(s,t)\colon~[0,1]\times\S\to S$ such that $H(0,t)=\gamma(t)$ and $H(1,t)=p$. If we define a value $s_0$ as
$$s_0=\sup\{s\in [0,1]\colon~q_i \notin U_{H(s,\cdot)}\},$$
then clearly $0<s_0<1$ and $q_i=H(s_0,t_0)$ for some $t_0\in \S$, a contradiction.
\end{proof}

\begin{lemma}\label{l:3}Let $\gamma\colon\S\to \bbp$ be a closed homotopically trivial curve in the punctured sphere $\bbp$. Then there is a simply connected open subset $\Gamma\subset \bbp$ such that $\Gamma$ contains all punctures and $\gamma\cap\overline \Gamma=\emptyset$. \end{lemma}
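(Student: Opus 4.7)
The plan is to pass to the closed sphere $\overline{\bbp}$, find a single connected component $V$ of $\overline{\bbp}\setminus\gamma$ that contains every puncture, show that $V$ is simply connected, and then pick inside $V$ an open topological disk $\Gamma$ whose closure still lies in $V$ and hence meets $\gamma$ nowhere. Throughout I view $\gamma$ as a compact subset of $\overline{\bbp}\setminus\{p_1,\dots,p_k\}$.

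To locate $V$, I pick one puncture, say $p_1$, and take a homeomorphism $\phi\colon \overline{\bbp}\setminus\{p_1\}\to\bbr^2$, which is available because $\overline{\bbp}$ is a topological $2$-sphere via Proposition~\ref{p:3}. This identifies $\bbp$ with $\bbr^2\setminus\{q_2,\dots,q_k\}$, where $q_i=\phi(p_i)$, and the image $\phi(\gamma)$ remains a homotopically trivial curve in this punctured plane. Lemma~\ref{l:2} then forces $\{q_2,\dots,q_k\}\subset U_{\phi(\gamma)}$, the unbounded component of $(\bbr^2\setminus\{q_i\})\setminus\phi(\gamma)$. Since ``unbounded in $\bbr^2$'' pulls back under $\phi^{-1}$ to ``contained in the component of $\overline{\bbp}\setminus\gamma$ that contains $p_1$,'' a single component $V$ of $\overline{\bbp}\setminus\gamma$ contains every $p_i$.

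To prove simple connectivity of $V$, I invoke the classical planar fact that a connected open $U\subsetneq\SP$ is simply connected if and only if $\SP\setminus U$ is connected. Write $K=\overline{\bbp}\setminus V=\gamma\cup\bigcup_{C\ne V}\overline{C}$, where the union runs over the remaining components of $\overline{\bbp}\setminus\gamma$. Each such $C$ is open with $\partial C\subset\gamma$ (a boundary point of $C$ not lying on $\gamma$ would have to belong to another component of $\overline{\bbp}\setminus\gamma$, which is impossible). Hence each $\overline{C}\cup\gamma$ is connected, so $K$ is a union of connected sets all sharing $\gamma$, and $K$ is connected. Therefore $V$ is simply connected.

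Finally, because $V$ is a proper simply connected open subset of the $2$-sphere, it is homeomorphic to $\bbr^2$. Under such a homeomorphism the finite set $\{p_1,\dots,p_k\}$ sits inside some open Euclidean disk whose closure is compact in $\bbr^2$; pulling this disk back yields an open simply connected set $\Gamma\subset V$ with $\{p_1,\dots,p_k\}\subset\Gamma$ and $\overline{\Gamma}\subset V$. In particular $\gamma\cap\overline{\Gamma}\subset\gamma\cap V=\emptyset$, as required. The main technical hurdle is the simple-connectivity of $V$: since $\gamma$ need not be simple, $\overline{\bbp}\setminus\gamma$ may have many components and $V$ can be topologically intricate; the complement-connectedness criterion in $\SP$ sidesteps this by reducing the question to the clean observation that every other component of $\overline{\bbp}\setminus\gamma$ has its boundary inside $\gamma$.
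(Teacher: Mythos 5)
Your proof is correct, but it takes a genuinely different route from the paper's. The paper's argument is very short and concrete: having placed all punctures in a single component $G$ of the complement of $\gamma$ (via Lemma \ref{l:2}), it uses arc-wise connectedness of $G$ to build a simple arc $A$ (a finite concatenation of segments) joining all the punctures inside $G$, and then takes $\Gamma$ to be a small open $\delta$-neighborhood of $A$; since $A$ and $\gamma$ are disjoint compacta, $\delta$ can be chosen so the tubular neighborhood stays inside $G$, and such a neighborhood of a simple arc is automatically an open disk. You instead prove that the whole component $V$ containing the punctures is already simply connected, by invoking the criterion that an open connected $U\subsetneq\SP$ is simply connected iff its complement is connected (your verification via $\partial C\subset\gamma$ is clean), and then appeal to the classification of proper simply connected open subsets of $\SP$ to identify $V$ with $\bbr^2$ and extract a round disk $\Gamma$. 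Both arguments are sound. The paper's is more elementary and self-contained (just thicken an arc); yours costs the complement-connectedness criterion and the topological version of the Riemann mapping theorem, but buys a stronger intermediate fact -- the entire component $V$ is simply connected -- that the paper never states.
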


\begin{proof}It follows from  Lemma \ref{l:2} that all punctures have to belong to the same component $G$ of $\bbp\setminus\gamma$. Since this component is open and connected, it is also arc-wise connected and there is a simple arc $A\subset G$ which is the closure of finitely many segments joining the punctures. Now, the set $\Gamma$ can be taken as an open $\delta$ neighborhood of $A$ for sufficiently small positive $\delta$.\end{proof}

A closed parametrized curve $\gamma$ in $\bbp$ is called nice if it intersects the equator a finite number of times at times (in the clockwise cyclic sense)
\begin{equation}\label{e:2}t(0) < t(1) \cdots < t(2m) = t(0),\end{equation}
two consecutive intersections are not in the same arc-side and the segments
$\{\gamma(t) : t \in [t_i,t_{i+1}]\}$ alternate between the top and bottom of the sphere as we vary $i$.
Note that the set of times $t(i)$ are fixed, thus an origin of the curve is marked to be $\gamma(t(0))$,  a cyclic permutation of the $t(i)$ will give ``another'' nice curve.

Let $\proj_1 : \bbp \times \S \to \bbp$ denote the first natural projection (to the foot point). Notice that if $\hat{\gamma_u} = \{T_tu: t \in \mathbb{R}\}$ is a closed billiard orbit in $T\bbp$ such that $\proj_1(u) \in E$, then $\gamma_u = \proj_1(\hat{\gamma})$ is
 a nice curve.

The symbolic pillowcase code of a nice closed curve $\gamma$ in $\bbp$ is
the sequence
$\sigma(\gamma)=(\sigma_0(\gamma),  \sigma_1(\gamma), \cdots , \sigma_{2m-1}(\gamma)) \in \{1,\dots,k\}^{2m}$
given by
\begin{equation*}\gamma({t(i))}\in e^{\circ}(\sigma_i).\end{equation*}
In the same way that nice curves are defined up to a cylic permutation, symbolic pillowcase codes
are also only defined up to a cyclic permutation of the $t(i)$'s, thus we will
say $\sigma(\gamma_1) \equiv \sigma(\gamma_2)$ if there is a $j$ so that $\sigma(\gamma_1)_{i+j } = \sigma(\gamma_2)_i$
for all $i$.

Let $[\alpha,\beta] \subset E$ denote the  oriented segment starting at $\alpha$ and ending at $\beta$.
For parameter intervals in $\S$ we will use the notation $[t_0,t_1]$ for the clockwise oriented interval and $[t_0,t_1]^{cc}$ for the counterclockwise
case.


\vspace{1.5pt}
\begin{figure}
\begin{center}\hskip-19mm\includegraphics[width=14.5cm]{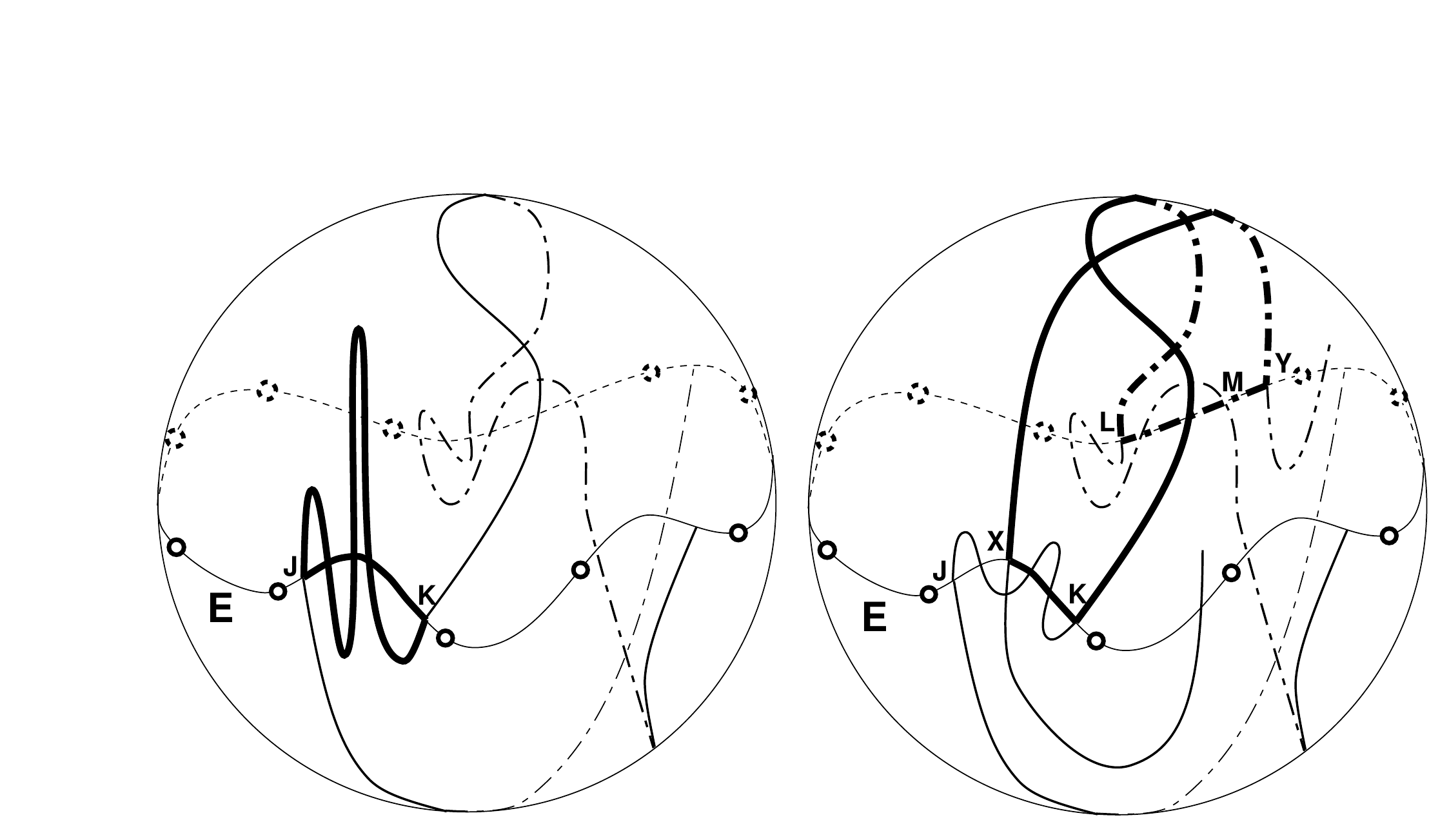}\end{center}
\caption{Definition \ref{def}(iii)-left and (iv)-right. $J=\delta(t_i^-)$, $K=\delta(t_i^+)$, $L=\delta(t_{i+1}^-)$, $M=\delta(t_{i+1}^+)$; $X=\gamma(t(i))$, $Y=\gamma(t(i+1))$.}
\end{figure}
\vskip2mm

\begin{definition}\label{def}Let $\gamma $ be a nice curve in $\bbp$
 with symbolic pillowcase code  $\sigma(u)=(\sigma_0\sigma_1\cdots\sigma_{2m-1})$. We say that a closed curve $\delta=\delta(t)$, $t\in\S$, on the sphere $\bbp$ is $\gamma$-admissible if there are $t^-_0,t^+_0,\dots,t^-_{2m},t^+_{2m}\in\S$ such that \end{definition}
\begin{itemize}
\item[(i)] $t^-_0\le t^+_0<t^-_1\le t^+_1 < \cdots<t^-_{2m} = t^-_0\le t^+_{2m} = t^+_0$ (in clockwise cyclic sense),
\item[(ii)] for each $0\le i\le 2m-1$, $\delta(t^-_i),\delta(t^+_i)\in e^{\circ}(\sigma_i)$,
\item[(iii)] for each $0\le i\le 2m-1$,  the set $$\Big \{\delta(t)\colon~t\in [t^-_i,t^+_i] \Big \}\cup [\delta(t^+_i),\delta(t^-_i)]$$ is a homotopically trivial curve,
\item[(iv)] for each $0\le i\le 2m-1$, the set
$$\{\gamma(t)\}_{[t(i),t(i+1)]}\cup [\gamma(t(i+1)),\delta(t^-_{i+1})]\cup
\{\delta(t)\}_{[t^-_{i+1},t^{+}_{i}]^{cc}}\cup
[\delta(t^+_i),\gamma(t(i))]$$
is a homotopically trivial curve.
\end{itemize}

\begin{lemma}\label{l:1}Let $\gamma$ be a nice closed curve in $\bbp$.  Assume that a closed curve $\delta=\delta(t)$, $t\in\S$, on the sphere $\bbp$ is $\gamma$-admissible. Then every closed curve on the sphere $\bbp$ which is sufficiently close with respect to the Hausdorff metric to $\delta$ is also $\gamma$-admissible.\end{lemma}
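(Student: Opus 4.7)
The plan is to show that $\gamma$-admissibility is an open condition in the Hausdorff topology on closed curves in $\bbp$, by producing witness times $t_i'^\pm$ for every $\delta'$ sufficiently close to $\delta$. Write $P_i^\pm:=\delta(t_i^\pm)\in e^\circ(\sigma_i)$ for the witness points of $\delta$.

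First, around each $P_i^\pm$ I would fix an open neighborhood $U_i^\pm\subset\bbp$ small enough that $U_i^\pm\cap E\subset e^\circ(\sigma_i)$, and, by continuity of $\delta$, an arc $I_i^\pm\subset\S$ containing $t_i^\pm$ with $\delta(I_i^\pm)\subset U_i^\pm$. The arcs $I_i^\pm$ can be chosen pairwise disjoint (coinciding when $t_i^-=t_i^+$) while preserving the cyclic order required by (i). For $\delta'$ sufficiently Hausdorff-close to $\delta$, I would then select $t_i'^\pm\in I_i^\pm$ with $\delta'(t_i'^\pm)\in U_i^\pm\cap E\subset e^\circ(\sigma_i)$; the disjointness of the arcs yields (i), and (ii) is immediate.

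For conditions (iii) and (iv), I would apply Lemma~\ref{l:3} to each of the homotopically trivial loops $L$ determined by the witness times of $\delta$ (and, in the case of (iv), by the fixed pieces of $\gamma$). Lemma~\ref{l:3} produces a simply connected open $\Gamma\subset\bbp$ enveloping all punctures and disjoint from $L$, so that $L\subset\bbp\setminus\overline\Gamma$. Invoking Theorem~\ref{JS}, the set $\bbp\setminus\overline\Gamma$ is itself an open disc, hence simply connected. For $\delta'$ close enough, the corresponding perturbed loop $L'$ built from $\delta'$ and the new witness times $t_i'^\pm$ remains inside this puncture-free region, so $L'$ is also homotopically trivial.

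The main obstacle is the second step: Hausdorff closeness of images does not a priori supply parametrizations matching that of $\delta$, nor force $\delta'$ to actually meet $E$ near each $P_i^\pm$. This is circumvented by the pairwise disjointness of the neighborhoods $U_i^\pm$ (which compels any sufficiently close continuous closed curve to visit them in the prescribed cyclic order), together with the observation that admissibility conditions (iii)--(iv) endow $\delta$ with essential, stable excursions across $E$, a feature inherited by small Hausdorff perturbations.
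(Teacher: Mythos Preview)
Your approach is essentially the same as the paper's: apply Lemma~\ref{l:3} to each of the finitely many trivial loops from Definition~\ref{def}(iii)--(iv) to obtain a positive distance between these loops and the puncture-enveloping region, then argue that any sufficiently close curve admits witness times for which the corresponding loops stay in that puncture-free (hence simply connected) region. You have in fact supplied more detail than the paper---in particular your explicit construction of the neighborhoods $U_i^\pm$ and the observation that $\bbp\setminus\overline\Gamma$ is a disc---and you correctly flag the one soft spot (that Hausdorff closeness alone does not control parametrizations), which the paper's proof passes over with the phrase ``by continuity'' and which is harmless in the only application (Proposition~\ref{code-homotop}), where the nearby curves arise from a homotopy and are therefore uniformly close as parametrized maps.
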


\begin{proof} For each homotopically trivial curve $\beta\subset\bbp$ given by  Definition \ref{def}(iii)-(iv) applied to the curve $\delta$ we use  Lemma \ref{l:3}
to produce a simply connected open subset $\Gamma_{\beta}$ of $\bbp$.  Since there are finitely many such $\beta$'s and $\beta\cap \overline{\Gamma_{\beta}}=\emptyset$, we get
\begin{equation*}\eta=\min_{\beta}\dist(\Gamma_{\beta},\beta)>0.\end{equation*}
            By continuity for any closed curve less than $\eta$ close with respect to the Hausdorff metric to
the closed curve  $\delta$,  there exist parameters $t_i^{\pm}$ such that the requirements (i)-(ii) of Definition \ref{def} are fulfilled and the curves defined in (iii)-(iv)  are homotopically trivial. \end{proof}

\begin{proposition}\label{code-homotop} Consider two nice closed curves  $\gamma_1,\gamma_2$ in $\bbp$.
Then $\gamma_1$ and $\gamma_2$ are from the same free homotopy class if and only if $\sigma(\gamma_1) \equiv \sigma(\gamma_2)$.
\end{proposition}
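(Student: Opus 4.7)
The plan is to prove the two implications separately.

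For the easier direction, suppose $\sigma(\gamma_1) \equiv \sigma(\gamma_2)$. After a cyclic relabelling I may assume $\sigma(\gamma_1) = \sigma(\gamma_2) = (\sigma_0, \ldots, \sigma_{2m-1})$. For each $i$ the crossings $\gamma_1(t_1(i))$ and $\gamma_2(t_2(i))$ both lie in the same open arc-side $e^{\circ}(\sigma_i)$, which is path-connected and contains no puncture, so I first slide the crossings of $\gamma_1$ along these arcs to match those of $\gamma_2$. By the alternation condition in the definition of nice, between two consecutive crossings the corresponding arcs of $\gamma_1$ and $\gamma_2$ lie in the same hemisphere, and each hemisphere is a topological disk in $\bbp$ containing no puncture. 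Since two paths in a disk with matching endpoints are homotopic rel endpoints, I homotope each of the $2m$ intermediate arcs of $\gamma_1$ to those of $\gamma_2$ and concatenate to obtain a free homotopy.

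For the converse, suppose $\gamma_1$ and $\gamma_2$ are freely homotopic. I would rely on the admissibility framework of Definition~\ref{def}. First note that $\gamma_1$ is trivially $\gamma_1$-admissible by taking $t_i^- = t_i^+ = t_1(i)$: condition (iii) reduces to a degenerate point and (iv) reduces to a piece of $\gamma_1$ traversed forward and then back, both manifestly contractible. Given a free homotopy $H \colon [0,1] \times \S \to \bbp$ with $H(0,\cdot) = \gamma_1$ and $H(1,\cdot) = \gamma_2$, let $A := \{s \in [0,1] : H(s,\cdot) \text{ is } \gamma_1\text{-admissible}\}$. Then $0 \in A$, and $A$ is open by Lemma~\ref{l:1}. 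To conclude $1 \in A$, I would prove $A$ is also closed. For $s_n \to s^* \in [0,1]$ with $s_n \in A$, pass to a subsequence along which the parameters $t_i^\pm(s_n) \in \S$ converge to $t_i^\pm(s^*)$. The strict inequalities $t_i^+ < t_{i+1}^-$ of (i) survive in the limit, because equality would force $H(s^*, t_i^+)$ to lie in $\overline{e^{\circ}(\sigma_i)} \cap \overline{e^{\circ}(\sigma_{i+1})}$, a set whose only possible point is a puncture, which is excluded from $\bbp$. Homotopic triviality of the closing loops in (iii)--(iv) is preserved under uniform limits, so (iii)--(iv) persist at $s^*$. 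Hence $A$ is clopen and equal to $[0,1]$, so $\gamma_2$ is $\gamma_1$-admissible.

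Finally, I would extract $\sigma(\gamma_1) \equiv \sigma(\gamma_2)$ from the facts that $\gamma_2$ is nice and $\gamma_1$-admissible. Since $\gamma_2$ is nice, its equator crossings are transverse and finite, so each $t_i^\pm$ coincides with one of $\gamma_2$'s crossing times $t_2(k_i^\pm)$, and (ii) yields $\sigma(\gamma_2)_{k_i^\pm} = \sigma_i$. The remaining claim is that $k_i^- = k_i^+$ for every $i$ and that these indices exhaust all crossings of $\gamma_2$ in cyclic order, giving $\sigma(\gamma_1) \equiv \sigma(\gamma_2)$. Any intermediate crossing of $\gamma_2$ in $(t_i^-, t_i^+)$ would, together with the alternation of hemispheres, force the loop in (iii) to wind nontrivially around a puncture enclosed in the region it bounds, contradicting its homotopic triviality; a parallel argument using (iv) excludes extra crossings of $\gamma_2$ in the gaps $(t_i^+, t_{i+1}^-)$. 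I expect this final combinatorial-topological step --- ruling out spurious intermediate crossings by combining homotopic triviality with the alternating top/bottom structure of nice curves --- to be the main technical obstacle of the proof.
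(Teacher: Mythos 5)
Your ``easy'' direction and the bulk of your ``hard'' direction match the paper. For the first, the paper uses a geodesic homotopy built hemisphere by hemisphere, which is essentially your piecewise homotopy. For the second, your clopen set $A$ is a cosmetic variant of the paper's $\bar s = \sup S$; both parts (the openness via Lemma~\ref{l:1}, the survival of strict inequalities in the limit, the preservation of homotopic triviality under uniform limits) track the paper's argument closely.

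The genuine gap is your final step. You correctly recognize that admissibility gives you $\sigma(\gamma_1)$ sitting inside $\sigma(\gamma_2)$ as a cyclic subsequence, and you propose to finish by a direct topological argument that spurious crossings of $\gamma_2$ inside $(t_i^-,t_i^+)$ or $(t_i^+,t_{i+1}^-)$ would force a loop in (iii)--(iv) to encircle a puncture. That claim is not established and is dubious as stated: a nice curve $\gamma_2$ can make an even number of back-and-forth transverse crossings of the equator on such a subinterval, with the resulting loop pinching off small contractible digons that enclose no puncture. Homotopic triviality of the loops in (iii)--(iv) does not by itself exclude this. The paper sidesteps the problem entirely with a symmetry trick you did not spot: since a free homotopy from $\gamma_1$ to $\gamma_2$ can be reversed, the same supremum argument with the roles of $\gamma_1$ and $\gamma_2$ exchanged shows $\gamma_1$ is $\gamma_2$-admissible, hence $\sigma(\gamma_2)$ is also a cyclic subsequence of $\sigma(\gamma_1)$. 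Both codes are finite, so the two containments force equal lengths and hence equality up to a cyclic shift, i.e., $\sigma(\gamma_1)\equiv\sigma(\gamma_2)$, with no need to rule out spurious crossings directly. You should replace your final paragraph with this symmetry argument.
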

\begin{proof}

Let $H=H(s,t)$, $(s,t)\in [0,1]\times\S$ be a homotopy between $\gamma_1$ and $\gamma_2$ such that $H(0,t)=\gamma_1(t)$, $H(1,t)=\gamma_2(t)$, $t\in\S$.
Let
$$S=\{s\in [0,1]\colon~\{H(s,t)\}_{t\in\S}\text{ is $\gamma_1$-admissible}\} \
\text{and}\ \bar s=\sup S.$$
First note that by the definition of symbolic code $\gamma_1$ is $\gamma_1$-admissible with times $t_i^- = t_i^+ = t(i)$.
It follows from Lemma \ref{l:1}  that $\bar{s}>0$. Now, take an increasing sequence $\{s_n\} \subset [0,1]$ such that
$\lim_ns_n=\bar{s}$ and for each $i\in\{0,\dots,2m\}$ the limits $\lim_nt^-_{i}(s_n)=t^-_{i}(\bar{s})$ and $\lim_nt^+_{i}(s_n)=t^+_{i}(\bar{s})$ exist, where
$t^-_{i}(s_n)$, $t^+_{i}(s_n)$ satisfy Definition \ref{def}(i) for an admissible curve $\{H(s_n,t)\}_{t\in\S}$.
Then by continuity of $H$ we have
$$t^-_0(\bar{s})\le t^+_0(\bar{s}) \le t^-_1(\bar{s}) \le \cdots \le t^-_{2m}(\bar{s})=t^-_0(\bar{s}).$$
If $t_i^+(\bar{s}) = t_{i+1}^-(\bar{s})$ for some $i$ then $H(\bar{s},t_i^+(\bar{s})) = H(\bar{s},t_{i+1}^-(\bar{s}))$, which can not
happen since then this point would be in different arc-sides which is impossible.
Thus
$$t^-_0(\bar{s})\le t^+_0(\bar{s})< t^-_1(\bar{s}) \le t^+_1(\bar{s}) < \cdots<t^-_{2m}(\bar{s})=t^-_0(\bar{s})$$
again, i.e., the requirement of Definition \ref{def}(i) is fulfilled.

Moreover, since the homotopy $H$ is uniformly continuous on the space $[0,1]\times \S$ and the uniform limit of a homotopically trivial curve is homotopically trivial, the conditions (ii)-(iv) of Definition \ref{def} hold again. It shows that $\bar s\in S$. Thus we can apply Lemma \ref{l:1} to obtain $\bar s=1$. But it means that also $\gamma_2$ is $\gamma_1$-admissible.

We have shown that the code $\sigma(\gamma_1)$ is seen in the nice curve $\gamma_2(t)$ at the times $t_i^+(\bar{s})$.  The code
of $\sigma(\gamma_2)$ might have a different choice of time 0, thus we have shown that
the code $(\sigma_0(\gamma_1) \sigma_1(\gamma_1)\cdots\sigma_{2m-1}(\gamma_1) )$ is contained in some cyclic permutation of the code
$(\sigma_0(\gamma_2) \sigma_1(\gamma_2)\cdots\sigma_{2m-1}(\gamma_2) )$  in an increasing order.
By symmetry, $\sigma(\gamma_1) \equiv\sigma(\gamma_2)$.

We assume that $\sigma(\gamma_1) \equiv\sigma(\gamma_2)$. Since a reparametrized closed curve is homotopic to the original curve, we can assume that the curves $\gamma_1$ and $\gamma_2$ are parametrized  such that $\sigma(\gamma_1)=\sigma(\gamma_2)$ and that the times defined in (\ref{e:2}) are the same.
Let us denote $\langle\gamma_1(t),\gamma_2(t)\rangle$ the subarc in $\bbp$ of the geodesic and lying in the same hemisphere as its endpoints $\gamma_1(t),\gamma_2(t)$.
We can define a ``geodesic'' homotopy $H(s,t)$, $(s,t)\in [0,1]\times \S$, from $\gamma_1$ to $\gamma_2$ by letting $H(s,t)$ the point of $\langle\gamma_1(t),\gamma_2(t)\rangle$ satisfying $$(1-s)~\dist(\gamma_1(t),H(s,t))=s~\dist(\gamma_2(t),H(s,t)).$$\end{proof}

Next we show that this theorem is of interest for all closed curves.

\begin{proposition}\label{nicerep}
Let $\gamma_1$ be a closed curve which is not homotopically trivial. Then there exists a nice closed curve $\gamma_2$ which is homotopic to $\gamma_1$.
\end{proposition}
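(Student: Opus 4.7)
The plan is to start from an arbitrary homotopically nontrivial closed curve $\gamma_1$ and perform two reductions: first make it transverse to the equator, and then cancel pairs of consecutive intersections that lie in the same arc-side.

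First I would use a small homotopy to replace $\gamma_1$ by a piecewise smooth closed curve $\gamma$ that is transverse to $E$ and whose crossings of $E$ avoid the punctures. By transversality $\gamma$ meets $E$ at only finitely many points $t(0) < t(1) < \cdots < t(2m) = t(0)$, and at every crossing it switches hemispheres, so the requirement that consecutive segments alternate between the top and bottom of $\bbp$ is automatic. If $m=0$, the entire curve $\gamma$ lies in one open hemisphere, which is an open topological disk containing no punctures (the punctures lie on $E$); hence $\gamma$ would be contractible, contradicting the assumption on $\gamma_1$. So $m\ge 1$.

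Next I would remove violations of the ``not in the same arc-side'' condition one at a time. Suppose $\gamma(t(i))$ and $\gamma(t(i+1))$ lie in the same arc-side $e^\circ(j)$. Then the subarc $\gamma|_{[t(i),t(i+1)]}$ is contained in the closure of a single hemisphere (say the top), and together with the segment $[\gamma(t(i+1)),\gamma(t(i))]\subset e^\circ(j)$ it forms a closed loop $\beta$ in the closed top hemisphere. Since the closed hemisphere is a topological closed disk whose only punctures sit at the vertices $p_1,\dots,p_k$ of $E$, and since both endpoints of the segment lie strictly inside $e^\circ(j)$, a small neighborhood of $\beta$ lies in a sub-disk of the closed hemisphere that avoids every puncture. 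Therefore $\beta$ is contractible in $\bbp$, and I can homotope $\gamma|_{[t(i),t(i+1)]}$ across this puncture-free disk onto the boundary segment and then push it slightly into the opposite hemisphere, producing a new closed curve homotopic to $\gamma$ with two fewer intersections with $E$ while preserving transversality.

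Finally, since each such cancellation strictly decreases the finite number of intersections, the procedure terminates after finitely many steps. The resulting curve $\gamma_2$ is transverse to $E$, has no two consecutive intersections in the same arc-side, alternates between the two hemispheres, and is homotopic to $\gamma_1$ by construction, so it is the required nice curve. The main obstacle I expect is the cancellation step: one must verify that the disk across which the homotopy is performed is truly puncture-free, which relies on the fact that the punctures lie only at the endpoints of the arc-sides and that by transversality the crossings can be chosen in the interiors of arc-sides.
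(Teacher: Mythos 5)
Your argument is correct, and it takes a genuinely different route from the paper's. The paper never invokes transversality: instead it starts directly from the continuous curve, isolates the maximal parameter intervals $[t_i^-,t_i^+]$ during which $\gamma_1$ visits a single arc-side, observes (by compactness) that there are finitely many such intervals, and then for each interval decides whether the curve should cross or not by comparing the hemispheres of $\gamma_1(t_i^- - t)$ and $\gamma_1(t_i^+ + t)$, replacing the piece $\gamma_1([t_i^- - t, t_i^+ + t])$ accordingly. Your version perturbs first to a transverse representative --- at which point ``finitely many crossings'' and ``alternation of hemispheres'' are automatic --- and then reduces the count by cancelling consecutive crossings in the same arc-side using the simple connectivity of a closed hemisphere with punctures only on its boundary. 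One thing your approach buys is that it handles the condition ``two consecutive intersections are not in the same arc-side'' explicitly; the paper's replacement procedure produces one crossing per ``opposite-hemisphere'' interval at a point of $e(j_i)$, but does not visibly rule out $j_i = j_{i'}$ for two consecutive crossing intervals, so a reader following the paper has to supply an argument very much like your cancellation step anyway. One small point to tighten in your write-up: when you ``push the subarc slightly into the opposite hemisphere'' you should really perform the replacement on a slightly enlarged interval $[t(i)-\epsilon,\,t(i+1)+\epsilon]$, so that the endpoints of the replaced piece lie strictly in the open opposite hemisphere; otherwise the modified curve still meets $E$ tangentially at $\gamma(t(i))$ and $\gamma(t(i+1))$, which would violate both the finite-intersection bookkeeping and the alternation requirement. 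This is exactly what the paper does by working on $[t_i^- - t,\, t_i^+ + t]$ with $t>0$.
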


\begin{proof}
Since $\gamma_1$ is compact, there exists on open neighborhood $O$ of the punctures $\{p_1,p_2,\cdots,p_k\}$
such that $\gamma_1 \cap \overline{O} = \emptyset$.  Thus if $\gamma_1(t_1)$ and $\gamma_1(t_2)$
are in different arc-sides, then there is some non-degenerate interval $I  \subset [t_1,t_2]$ such that $\gamma_1(t)$ is not in an arc-side
for all $t \in I^{\circ}$.  We can find a sequence of times $t_0^- \le t_0^+ < t_1^- \le t_1^+ < t_2^- \cdots$, which is either finite, or
$\lim_{m \to \infty} t^{\pm}_m = t_{\infty}$ for some $t_\infty\in\S$, such that for each $i$
\begin{itemize}
\item there is a side $e(j_i)$ such that
all visits of $\gamma_1(t)$ of a side for $t \in [t_i^-,t_i^+]$  are in $e(j_i)$ and
\item $\gamma_1(t)$ is not in an arc-side for all
$t \in (t_i^+,t_{i+1}^-)$.
\end{itemize}
Furthermore by continuity $\gamma(t_{\infty})$ is in at least two sides,
which is not possible. Thus  we must have only a finite number of $t_i$'s.

We will now construct the nice curve by modifying $\gamma_1$ in a homotopic way.
Fix $i$. There are two cases,  either there exists small positive $\varepsilon$ so that   $\gamma_1(t_i^- - t)$ and $\gamma_1(t_i^+ + t)$  are in the same hemisphere, or opposite hemispheres for all  $t \in (0,\varepsilon)$. The latter case has to happen at least twice since the curve $\gamma_1$ is not homotopically trivial.

Fix $t \in (0,\varepsilon)$. In the same hemisphere case we replace  the piece of the curve $\gamma([t_i^--t,t_i^++t])$ by a curve completely contained in the same open
hemisphere starting at $\gamma([t_i^--t])$  and ending at $\gamma([t_i^++t])$. In the case when they are in opposite hemispheres we
replace  the piece of the curve $\gamma([t_i^--t,t_i^++t])$ by a curve  starting at $\gamma([t_i^--t])$  and ending at $\gamma([t_i^++t])$
which crosses the equator exactly once in the arc-side $e(j_i)$.
\end{proof}

\begin{proofof}{Theorem \ref{biject}}
The theorem follows immediately by combining Propositions \ref{code-homotop} and \ref{nicerep}.
\end{proofof}

\subsection{Application to polygonal billiards}\label{application}

We have seen in Proposition \ref{p:3} that two pillowcases $$(\mathbb{P},E,\{p_1,p_2,\dots,p_k\}),~(\mathbb{P}',,E',\{p'_1,p'_2,\dots,p'_k\})$$  can be homeomorphically identified by a homeomorphism $\phi: \overline{\mathbb{P}}\to \overline{\mathbb{P}'}$ so that $\phi(E) = E'$ and $\phi(p_i) = p'_i$ for all $i$. This is what we call a homeomorphism between pillowcases.
Note that the homeomorphism does not preserve the flat metric.

Consider now two $k$-gons $P$ and $Q$.
We fix a cyclic labeling of the sides of $P$ and of $Q$, thus by the above we can identify their pillowcase models.  Up to cyclic permutation,
there are $k$ different identifications possible.
Each of these labelings can be identified with $\bbp$.
Fix a labeling of $Q$, thus are thinking of $\bbp$ as representing $P$ and $Q$ at the same time.

Since the projection of a closed billiard trajectory is a nice curve we have the following corollary.
\begin{corollary}\label{c:1} Let $P$ and $Q$ be homeomorphically identified with fixed labeling.
Let $\hat{\gamma_P}$, resp.\ $\hat{\gamma_Q}$ be a closed billiard trajectory on $P$, resp.\ $Q$.
Then $\gamma_P=\proj_1(\hat{\gamma_P})$ and $\gamma_Q=\proj_1(\hat{\gamma_Q})$ are from the same homotopy class
if and only if $\sigma(\gamma_P) \equiv \sigma(\gamma_Q)$.
\end{corollary}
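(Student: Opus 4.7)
The plan is to deduce the corollary directly from Proposition \ref{code-homotop} after two brief reductions that are already essentially in place.

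First, by Proposition \ref{p:3} there is a homeomorphism $\phi\colon\overline{\mathbb{P}}_P\to\overline{\mathbb{P}}_Q$ that sends the equator to the equator, the punctures to the punctures, and respects the cyclic labeling. Transporting $\gamma_Q$ by $\phi^{-1}$, I can regard both $\gamma_P$ and $\gamma_Q$ as closed curves in a single punctured sphere $\bbp$. Since a homeomorphism induces a bijection on free homotopy classes, the condition ``$\gamma_P$ and $\gamma_Q$ lie in the same free homotopy class'' is preserved by this identification, and similarly the pillowcase codes are preserved because $\phi$ takes arc-side $e(i)$ to arc-side $e'(i)$.

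Second, I would verify that $\gamma_P$ and $\gamma_Q$ are nice closed curves, as indicated in the text just before the definition of the symbolic pillowcase code. The reflection times of the closed billiard trajectory $\hat\gamma$ provide a finite increasing sequence $t(0)<t(1)<\cdots<t(2m)=t(0)$ on $\S$; between two consecutive reflections the projected trajectory is a straight segment in one hemisphere of $\bbp$, and since the pillowcase construction swaps hemispheres at each crossing of $E$, the top/bottom alternation in the definition of ``nice'' is automatic. Moreover, in a simple polygon two consecutive bounces cannot occur on the same side: a reflection off a straight side sends the trajectory away from that side into the interior, and to revisit the same side it must first cross another side. Hence $\sigma(\gamma_P)$ and $\sigma(\gamma_Q)$ are well-defined, and the curves satisfy all hypotheses of Proposition \ref{code-homotop}.

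With both preparations in hand, the corollary is simply the statement of Proposition \ref{code-homotop} applied to the pair of nice curves $\gamma_P$ and $\gamma_Q$ in $\bbp$. There is no real obstacle, since the two inputs--homeomorphic identification of pillowcases and niceness of billiard projections--are either explicitly proved earlier in the section or follow immediately from the straight-line geometry of billiard segments between reflections. The only point that warrants a sentence of explanation is the ``consecutive bounces on distinct sides'' property, which uses the simplicity of $P$ and $Q$.
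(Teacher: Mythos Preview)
Your proposal is correct and follows the paper's approach exactly: the paper's proof consists of the single sentence ``Since the projection of a closed billiard trajectory is a nice curve we have the following corollary,'' invoking Proposition~\ref{code-homotop} implicitly after the identification of the two pillowcases discussed just above the corollary. You have simply spelled out the two ingredients (the homeomorphic identification via Proposition~\ref{p:3} and the verification of niceness) in more detail than the paper does, including the useful remark that consecutive reflections lie on distinct sides because the outgoing ray after a reflection cannot return to the line containing that side.
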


\section{Polygonal billiards}

A polygonal billiard table is a polygon $P$.  Our polygons are assumed to be planar, simply connected, not necesarily convex, and compact, with all angles non trivial, i.e., in $(0,2\pi)\setminus \{\pi\}$.
The billiard
flow $\{T_t\}_{t\in\bbr}$ in $P$ is generated by the free motion of
a point mass subject to elastic reflections in the boundary. This
means that the point moves along a straight line in $P$ with a
constant speed until it hits the boundary. At a smooth boundary
point the billiard ball reflects according to the well known law of
geometrical optics: the angle of incidence equals  the angle of
reflection. If the billiard ball hits a corner, (a non-smooth
boundary point), its further motion is not defined. Additionally to
corners, the billiard trajectory is not defined for orbits
tangent to a side.

By $D$ we denote
the group generated by the reflections in the lines through the
origin, parallel to the sides of the polygon $P$. The group $D$ is
either

\begin{itemize}\item finite, when  all the angles of $P$ are of the form $\pi
m_i/n_i$ with distinct co-prime integers $m_i$, $n_i$,  in this
case $D=D_N$ the dihedral group generated by the reflections in
lines through the origin that meet at angles $\pi/N$, where $N$
is the least common multiple of $n_i$'s,\end{itemize} or
\begin{itemize}\item countably infinite, when at least one angle
between sides of $P$ is an irrational multiple of $\pi$.
\end{itemize} In the two cases we will refer to the polygon as
rational, respectively irrational.

Consider the phase space $P\times \S$ of the billiard flow $T_t$,
and for $\theta\in \S$, let $R_{\theta}$ be its subset of points
whose second coordinate belongs to the orbit of $\theta$ under $D$.
Since a trajectory changes its direction by an element of $D$ under
each reflection, $R_{\theta}$ is an invariant set of the billiard
flow $T_t$ in $P$.
In fact if $P$ is a rational polygon then the set $R_{\theta}$ is a compact surface
with conical singularities and a flat metric away from the singularities, i.e., a  translation
surface. The set $P\times \theta$ will be called a
floor of the phase space of the flow $T_t$.

The billiard map $T\colon~V_P=\cup e\times\Theta\subset \dP\times
(-\frac{\pi}{2},\frac{\pi}{2})\To V_P$ associated with the flow $T_t$ is the first return map
to the boundary $\dP$ of $P$. Here the union $\cup e \times \Theta$
is taken over all sides of $P$ and for each side $e$ over the inner
pointing directions $\theta\in\Theta = (-\frac{\pi}{2},\frac{\pi}{2})$ measured with respect to the inner pointing normal.  We will denote points of $V_P$ by $u = (x,\theta)$. We sometimes use the map $\varrho\colon~V_P\to \bbr^+$ defined as
\begin{equation}\label{rho}\varrho(u,\tilde u)=\max\{\vert\proj_1(u)-\proj_1(\tilde u)\vert, \vert\proj_2(u)-\proj_2(\tilde u)\vert\}.\end{equation}
As usual, $\proj_1$, resp.\  $\proj_2$
denotes the first natural projection (to the foot
point), resp.\  the second natural projection (to the
direction). Clearly the map $\varrho$ is a metric.

The billiard map $T$ has a
natural invariant measure on its phase space $V_P$, the phase area given by the formula
$\sin\theta~\di x~\di\theta$.  In the case, when $P$ is rational, this measure
is not ergodic since for each $\theta$ the skeleton $K_P =V_P \cap R_{\theta}$ of the
surface $R_{\theta}$ is an invariant set.
We will call the associated measure $\mu$ suppressing the $\theta$ dependance.
In particular, an edge $e$
of $V_P \cap R_{\theta}$ associated with $\theta\in (0,\pi)$ has the $\mu$-length $\vert
e\vert\cdot\sin\theta$.

A {\em saddle connection} is a  segment of a billiard orbit starting in a corner of $P$ and ending in
a corner of $P$ which does not pass through any corners in between.
We can concatenate a saddle connection which ends at a certain vertex
of $P$ with another saddle connection which starts at this vertex, if we arrive back at
the starting vertex after a finite number of such concatenations we call this a {\em saddle loop}.
Note that a saddle loop can consist of a single saddle connection.  Closed billiard trajectories always
appear in families; any such family fills a {\em maximal cylinder} bounded on each side by a saddle loop.
We will call such saddle loops {\em regular} and those which do not bound a closed cylinder {\em irregular}.

%

 A direction, resp.\  a point $u$ from the phase space is
exceptional if it is the direction of a saddle connection, resp.\
$\proj_2(u)$ is such a direction. Obviously there are countably many
saddle connections hence also exceptional directions. A
direction, resp.\  a point $u$ from the phase space, which is not
exceptional will be called non-exceptional.

We proceed by recalling several well known and useful (for
our purpose) results about polygonal billiards (see for example
\cite{MT}).
Recall that a flat
strip $\mathcal T$ is an invariant subset of the phase space of the
billiard flow/map such that
\begin{itemize}\item[1)] $\mathcal T$ is
contained in a finite number of floors,\item[2)] the billiard
flow/map dynamics on $\mathcal T$ is minimal in the sense that
any orbit which does not hit a corner is dense in $\mathcal T$,
\item[3)] the boundary of $\mathcal
T$ is non-empty and consists of a finite union of saddle connections.\end{itemize}

The set of the corners of $P$ is denoted by $C_P$. As usual, an $\omega$-limit set of a point $u$  is denoted by
$\omega(u)$. Let $F_P=F_P(T)$ be the set of all $u\in V_P$ for which the forward trajectory (with respect to $T$) exists.

\begin{proposition}\cite{MT}~\label{p-summary}Let $P$ be rational and $u \in F_P$. Then exactly one of the following three possibilities has to be satisfied.
\begin{itemize}\item[(i)] $u$ is periodic.
\item[(ii)] $\overline{\orb}(u)$ is a flat strip; the billiard flow/map is minimal on $\overline{\orb}(u)$.
\item[(iii)] For the flow $T_t$, $\omega(u)=R_{\proj_2(u)}$. The billiard flow/map is minimal on $R_{\proj_2(u)}$. We have
\begin{equation*}
\#(\{\proj_2(T^n(u))\colon~n\ge 0\})=2N,\end{equation*} and for
every $x\in\partial  P\setminus C_P$,
\begin{equation*}
\#\{u_0\in\omega(u)\colon~\proj_1(u_0)=x\}=N,\end{equation*} where $N=N_P$
is the least common multiple of the denominators of angles of
$P$. Moreover, in this case
\begin{equation*}\proj_2(\{u_0\in\omega(u)\colon~\proj_1(u_0)=x\})=\proj_2(\{u_0\in\omega(u)\colon~\proj_1(u_0)=x'\})\end{equation*}
whenever $x'\notin C_P$ belongs to the same side as $x$. Case (iii) holds whenever $u \in F_P$ is non-exceptional.
\end{itemize}
\end{proposition}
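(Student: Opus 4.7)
The plan is to pass from the billiard dynamics on $P$ to the straight-line flow on the translation surface $R_\theta$, where $\theta = \proj_2(u)$, via the Zemlyakov--Katok unfolding construction. Concretely, I would take $2N$ isometric copies of $P$, indexed by the elements of $D = D_N$, rotate each copy by the corresponding element, and glue each side $e$ of the $g$-copy to the same side $e$ of the $r_e g$-copy, where $r_e$ denotes reflection in the line through the origin parallel to $e$. The resulting surface is a compact translation surface with conical singularities at the lifts of the corners, and the billiard flow on $P$ in the direction $\theta$ is conjugate to the straight-line flow on $R_\theta$ in that direction. This reduces the proposition to a statement about directional flows on translation surfaces.

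Next I would invoke the basic trichotomy (essentially due to Katok, Masur, and Veech) for a directional flow on a compact translation surface: every forward orbit which never hits a singularity either is closed, or its closure is a proper invariant subset bounded by saddle connections on which the flow is minimal (i.e., a flat strip or a union of such strips in the same direction), or is dense in a minimal component containing no saddle connections whatsoever. Pulled back to $P$, these three possibilities correspond exactly to (i), (ii), and (iii): periodic orbits, flat strips bounded by saddle loops, and orbits that are dense in the entire floor $R_\theta$. The minimality assertions in (ii) and (iii) are inherited from the corresponding minimality on the translation surface.

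The counting statements in (iii) reduce to a combinatorial analysis of the $D$-action on $\theta$. Since $\theta$ is non-exceptional, its $D$-orbit has exactly $2N$ elements; at each interior boundary point $x$, precisely $N$ of these $2N$ directions point inward into $P$, giving the cardinality $N$ of $\{u_0 \in \omega(u) : \proj_1(u_0) = x\}$. The equality of direction sets along a fixed side then follows because the gluing in the unfolding identifies two interior points of the same side via a fixed reflection, so the sheets of $R_\theta$ meeting over a given side all carry the same inward direction set. The final assertion, that a non-exceptional $u \in F_P$ always falls in case (iii), is immediate: both (i) and (ii) require saddle connections in the direction $\theta$, since closed cylinders are bounded by regular saddle loops and flat strips are bounded by saddle connections, so non-exceptionality of $\proj_2(u)$ excludes them.

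The main obstacle is the translation-surface trichotomy itself, whose proof rests on the theory of interval exchange transformations (Keane's minimality theorem, together with the classification of minimal components for directional flows). I would only sketch this, citing \cite{MT} for the detailed argument, and focus the exposition on the unfolding and on the combinatorics of the $D$-orbit that produce the precise counts in (iii).
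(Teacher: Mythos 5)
The paper does not prove this proposition; it is quoted directly from \cite{MT}, and your outline --- Zemlyakov--Katok unfolding to the translation surface $R_\theta$, the directional-flow trichotomy (periodic / proper minimal component bounded by saddle connections / minimal on all of $R_\theta$), and the count of inward-pointing members of $D\theta$ over a point of a fixed side --- is precisely the standard argument behind that reference, so it matches the approach the paper relies on. One step you leave implicit is that a non-exceptional $\theta$ has trivial stabilizer in $D$, i.e.\ that every reflection axis of $D_N$ is a saddle-connection direction; this is true but not immediate from the definition of ``non-exceptional,'' and it is exactly what guarantees the orbit $D\theta$ has the full $2N$ elements rather than fewer, so it deserves a sentence.
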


In general the  existence of periodic points in a polygonal billiard is still not sufficiently understood. For example, it is not known if every obtuse irrational triangle billiard has  a periodic point. But, for rational billiards the following statements holds true.

\begin{theorem}\label{dense}\cite{BGKT} If $P$ is a rational polygon, then periodic points are dense in the corresponding phase space.\end{theorem}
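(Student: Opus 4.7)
\emph{Plan.} Given a non-empty open set $U\subset V_P$, the goal is to find a periodic point of $T$ in $U$. The exceptional directions form a countable set, so I can pick a non-exceptional $u_0=(x_0,\theta_0)\in U$. By Proposition~\ref{p-summary}(iii) the forward orbit of $u_0$ is minimal on $R_{\theta_0}$, hence returns to $U$ infinitely often. Since $\proj_2(T^n u_0)$ takes values in the finite set $D\cdot\theta_0$, I can pass to a subsequence of return times $n\to\infty$ along which $\proj_2(T^n u_0)=\theta_0$ and $T^n u_0\to u_0$ in $V_P$.

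Next I unfold. Fix such a large $n$. Because $\theta_0$ is non-exceptional, the orbit segment $u_0,Tu_0,\dots,T^{n-1}u_0$ avoids the corners $C_P$, so there is an open neighborhood $W\ni u_0$ on which the combinatorial sequence of sides struck during the first $n$ steps is constant. Unfolding across these $n$ reflections produces a planar staircase $\widetilde P$ and an orientation preserving isometry $\Phi\in D\ltimes\bbr^2$ such that for every $(x,\theta)\in W$ one has $T^n(x,\theta)=\Phi^{-1}\bigl(x+t(x,\theta)(\cos\theta,\sin\theta)\bigr)$, where $t(x,\theta)$ is the exit time from $\widetilde P$. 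The identity $\proj_2(T^n u_0)=\theta_0$ forces the linear part of $\Phi$ to fix the vector $(\cos\theta_0,\sin\theta_0)$; after replacing $n$ by $2n$ if necessary (using the finiteness of $D$) I may assume $\Phi$ is a pure translation by some vector $v\in\bbr^2$.

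Closing the orbit is then a direct calculation. The periodicity equation $T^n(x,\theta)=(x,\theta)$ reduces to $t(x,\theta)(\cos\theta,\sin\theta)=v$, i.e.\ $\theta$ must be parallel to $v$ with matching orientation. The near-return $T^n u_0\approx u_0$ says precisely that $v$ is nearly parallel to $(\cos\theta_0,\sin\theta_0)$, so there is a unique $\theta_\ast\in\S$ near $\theta_0$ with $(\cos\theta_\ast,\sin\theta_\ast)=v/|v|$. For this $\theta_\ast$ and every foot point $x$ in a small neighborhood of $x_0$ on the same side (chosen so that $(x,\theta_\ast)\in W$), the straight segment from $(x,\theta_\ast)$ in $\widetilde P$ exits at $x+v=\Phi(x)$, hence $T^n(x,\theta_\ast)=(x,\theta_\ast)$. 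This produces an open $1$-parameter family of $n$-periodic points in $W$, which is exactly the cylinder of periodic orbits one expects, and it meets $U$.

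\emph{Main obstacle.} The decisive technical point is quantitative: as $n$ grows, the combinatorial neighborhood $W$ shrinks (trajectories become more sensitive to perturbation of $(x,\theta)$), while the near-return gap $d(T^n u_0,u_0)$ also shrinks along the chosen subsequence. One must select $n$ along this subsequence large enough that the near-return gap, and hence $|\theta_\ast-\theta_0|$, is controlled by the size of $W$ in the $\theta$-direction; this guarantees that the perturbed family $(x,\theta_\ast)$ actually lies in $W$ and in $U$. The non-exceptionality of $\theta_0$ is what makes $W$ open and what permits this scale-matching in the first place; arranging $\Phi$ to be a pure translation is a minor bookkeeping matter.
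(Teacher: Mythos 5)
The paper does not prove Theorem \ref{dense}; it is quoted from \cite{BGKT}, so there is no internal proof to compare against. Judged on its own terms, your outline follows the natural strategy (pick a non-exceptional $u_0\in U$, use minimality on $R_{\theta_0}$ to produce a near-return with the direction returning exactly to $\theta_0$, arrange the unfolding isometry $\Phi$ to be a pure translation, and tilt the angle to close the orbit), and you have correctly isolated the decisive step. But you then only \emph{name} the obstacle and assert that non-exceptionality resolves it; that is not an argument, and in fact this step is the crux of the theorem.

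The difficulty is genuine. The angular width of the combinatorial window $W$ for $n$ steps is governed by the minimum distance, over the first $n$ reflections, from the unfolded trajectory to the unfolded corners, divided by the elapsed time. Because the orbit is minimal it approaches the corners arbitrarily closely, so this minimum distance tends to $0$ as $n\to\infty$, and the rate at which it does so has no a priori relation to the rate at which $|x_n-x_0|\to 0$ along your chosen near-return times: the closest approach to a corner is typically realized at some intermediate time $m<n$ at which the orbit is nowhere near $u_0$, so the two quantities decouple. Without an inequality of the form $|x_n-x_0|\lesssim \mathrm{dist}(\text{orbit segment},\text{corners})$ there is no reason $\theta_\ast$ lands in $W$. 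Establishing such a bound --- via an area/strip argument on the flat surface $R_{\theta_0}$, or via the bounded complexity of the first-return interval exchange to a short transversal, together with a careful choice of $u_0$ and of the return time --- is precisely the content of \cite{BGKT}; it is not a consequence of $W$ merely being open. A secondary gap: $D\Phi_{2n}$ is not $(D\Phi_n)^2$ unless $T^nu_0$ already lies in $W$ (the same scale-matching issue); the standard fix is to pick two return times whose unfolding isometries have the \emph{same} linear part in the finite group $D$ and to use the intermediate orbit segment.
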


To deal with an irrational billiard we will use

\begin{theorem}\label{t-irrat}\cite[Theorem 4.1]{BT1}~Let $P$ be irrational and $u \in F_P$.
\begin{itemize}\item[(i)] If $\proj_2(u)$ is non-exceptional then
$\{\proj_2(T^nu)\colon~n\ge 0\}$ is infinite.
\item[(ii)] If $u$ is not periodic, but visits only a finite
number of floors then ($u$ is uniformly recurrent and) $\overline{\orb}(u)$ is
a flat strip.
\end{itemize}
\end{theorem}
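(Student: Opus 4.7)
My plan is to prove both parts by unfolding the forward orbit of $u$ into an invariant compact translation surface and invoking the structure theorem for translation flows (decomposition in a given direction into periodic cylinders plus minimal components, each bounded by saddle connections in that direction). The irrationality hypothesis on $P$ is what makes the finite-direction assumption restrictive, since the full reflection group $D$ is infinite.

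For part (i) I argue by contrapositive: assume $\Theta := \{\proj_2(T^n u) : n \ge 0\}$ is finite and show that $\theta_0 := \proj_2(u)$ is exceptional. A periodic $u$ already forces $\theta_0$ to be exceptional, as the boundary of its maximal cylinder is a saddle loop in the same direction; hence I may assume $u$ is non-periodic. Construct a space $X$ by taking one copy $P_\theta$ of $P$ for each $\theta \in \Theta$ and gluing the side $e_j$ of $P_\theta$ to $e_j$ of $P_{\rho_j(\theta)}$ whenever the orbit crosses $e_j$ from direction $\theta$. Once the gluings are verified consistent, $X$ is a compact translation surface of area at most $|\Theta|\cdot\mathrm{area}(P)$, on which the orbit of $u$ lifts to a translation-flow orbit in the fixed direction $\theta_0$. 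Applying the structure theorem on $X$, the lifted orbit lies either in a periodic cylinder or in a minimal component; either way, the bounding saddle connections in direction $\theta_0$ on $X$ project to billiard saddle connections on $P$ with direction in $D\cdot\theta_0$, so $\theta_0$ is exceptional.

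For part (ii), construct $X$ in the same way from the hypothesis that only finitely many floors are visited. The lift of the orbit is a non-periodic translation orbit in direction $\theta_0$ on the compact surface $X$, hence belongs to a minimal component $\mathcal M$ of the direction-$\theta_0$ decomposition. By the structure theorem $\mathcal M$ is bounded by a non-empty finite union of saddle connections in direction $\theta_0$. Projecting $\mathcal M$ back to $P$ yields $\overline{\orb}(u)$, which inherits the three defining properties of a flat strip: (1) containment in the finitely many floors indexed by $\Theta$; (2) minimality under the billiard flow, inherited from minimality of the translation flow on $\mathcal M$; (3) boundary a non-empty finite union of billiard saddle connections, obtained by projecting the bounding saddle connections of $\mathcal M$.

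The principal obstacle is rigorously constructing the compact surface $X$ in the irrational setting. Since $D$ itself is infinite, the naive unfolding $P \times D$ is not compact, and one must extract from the orbit a finite effective subgroup $D' \le D$. The key step is to show that the reflections $\rho_{j_n}$ appearing along the orbit generate a group $D'$ whose action on $\Theta$ via the dynamics is well-defined and faithful: any non-identity element of $O(2)$ fixing a direction is the unique reflection through that direction, so distinct elements of $\Theta$ cannot be simultaneously fixed, and the action $D' \hookrightarrow \mathrm{Sym}(\Theta)$ is injective. This forces $|D'| \le |\Theta|!$, making $X$ a finite cover of $P$ and hence compact. This combinatorial step is the heart of the argument and is where I expect the main technical difficulty; once it is established, the rest is a direct application of translation-surface machinery.
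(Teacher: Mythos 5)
Your framework (unfold to a compact translation surface, apply the structure theorem in the fixed direction) is a natural and essentially correct one, but the step you single out as ``the heart of the argument'' is both unnecessary and incorrect, and the place where the work actually lies is not addressed.

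First, compactness of $X$ does not require $D'$ to be finite: you are taking one copy of the compact polygon $P$ for each $\theta$ in the finite set $\Theta$, so $X$ is compact regardless of the group. The claimed faithful action $D'\hookrightarrow \mathrm{Sym}(\Theta)$ is not available: $\Theta$ is the specific set of directions visited by the orbit of $u$, and there is no reason for $\Theta$ to be $D'$-invariant. The recursion $\theta_{n+1}=\rho_{j_n}\theta_n$ only tells you that \emph{one particular} generator maps $\theta_n$ into $\Theta$; applying some other $\rho_j\in D'$ to some other $\theta\in\Theta$ can easily leave $\Theta$. In fact $D'$ can be infinite even when $\Theta$ is finite — e.g.\ an orbit bouncing between two sides whose defining lines meet at an irrational multiple of $\pi$ generates an infinite dihedral group, while the dynamical sequence $(\theta_n)$ follows a single path in its Cayley graph and may have small range.

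Second, you assert $X$ is a finite (unbranched) cover of $P$, hence a \emph{closed} translation surface. This is precisely what cannot happen here. If every side of every copy $P_\theta$ were glued, then $\Theta$ would be invariant under all the generators $\rho_j$, hence under all of $D$; since $P$ is irrational, $D$ is infinite and the stabiliser of any direction in $O(2)$ has order at most $2$, so $D\cdot\theta_0$ — and with it $\Theta$ — would be infinite. So for an irrational polygon with finitely many visited directions, $X$ is necessarily a surface \emph{with boundary}; your own gluing rule (glue only the crossed sides) produces boundary, but you then treat $X$ as closed.

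Third, and this is the genuine gap: once you know $X$ is not a closed surface, the structure theorem for closed translation surfaces does not apply verbatim, and in the closed case there is the possibility that the minimal component is all of $X$, in which case there are \emph{no} bounding saddle connections in direction $\theta_0$ and the conclusion of (i) fails. You write ``either way, the bounding saddle connections \dots project to billiard saddle connections,'' silently excluding the boundaryless case; ruling it out is exactly where irrationality enters, via the invariance argument above. This step — showing that the orbit closure is a proper flat strip with nonempty boundary of saddle connections, rather than a closed invariant surface — is the substance of both parts of the theorem and is missing from the proposal. A cleaner route that avoids these surface-with-boundary issues is to pass directly to the first return map on a transversal segment: finitely many visited directions make it an interval exchange (possibly with flips) on finitely many intervals, and the Keane-type dichotomy for such maps (periodic orbit vs.\ minimal set bounded by finitely many discontinuity orbits, which correspond to saddle connections) gives both (i) and (ii), with the irrationality of $P$ again used to exclude minimality on the whole phase set.
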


For a simply connected $k$-gon $P$ we always consider counterclockwise
numbering of sides $e_1=[p_1,p_2],\dots,e_k=[p_k,p_1]$; we denote $e_i^{\circ}=(p_i,p_{i+1})$.

For $u\in V_P$, the $i$th symbolic coordinate $\sigma_i(u)$, $i\in\bbz$, is a number
$\{1,\dots,k\}$ defined by (if it exists)
\begin{equation*}\proj_1(T^iu)\in e^{\circ}_{\sigma_i}.\end{equation*}

A symbolic forward (backward, bi-infinite) itinerary of $u$ with respect to the sides of $P$ is a sequence
$$\sigma(u)=\{\sigma_i(u)\}_{i\ge 0},~(\sigma(u)=\{\sigma_i(u)\}_{i\le 0},~\sigma(u)=\{\sigma_i(u)\}_{i\in\tiny\bbz}).$$

For a sequence $\sigma=\{\sigma_i\}_{i\ge 0} \in \{1,\dots,k\}^{\tiny\bbn\cup\{0\}}$ we denote by $X(\sigma)$ the set of points from $V_P$ whose symbolic forward itinerary equals to $\sigma$.

We will repeatedly use the following result.

 \begin{theorem}\label{t1}\cite{GKT}~Let $P$ be a polygon. The following is true. \begin{itemize}\item[(i)]If $\sigma$ is periodic with period $n$ then if $n$ is even each point from $X(\sigma)$ is a periodic point of period $n$ and if $n$ is odd then there is one periodic point in $X(\sigma)$ of period $n$ and all other points from $X(\sigma)$ have period $2n$.
\item[(ii)]If $\sigma$ is non-periodic then the set $X(\sigma)$ consists of at most one point.\end{itemize}\end{theorem}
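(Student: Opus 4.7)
The plan is to use the standard unfolding construction: any orbit in $X(\sigma)$ unfolds to a straight line in a chain of reflected copies of $P$, where the $i$-th reflection is taken across the copy of side $e_{\sigma_i}$. I will let $\Phi$ be the isometry of $\mathbb{R}^2$ obtained by composing the first $n$ reflections, so that $\Phi$ sends the initial copy of $P$ to the $n$-th copy and $T^n$ on $X(\sigma)$ corresponds to pulling back by $\Phi^{-1}$.

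For part (i), I would split on the parity of $n$. When $n$ is even, $\Phi$ is orientation-preserving; periodicity of $\sigma$ forces $\Phi$ to preserve the orbit direction $\theta$ (any nonzero rotation angle would shift the direction and hence change the symbol sequence), and a boundedness argument on the perpendicular component then shows $\Phi$ must be a translation parallel to $\theta$. Consequently $T^n$ acts as the identity on $X(\sigma)$, and minimality of $n$ as the symbolic period forces the billiard period to be exactly $n$. When $n$ is odd, $\Phi$ is orientation-reversing and thus a reflection or glide reflection with axis parallel to $\theta$. Parametrizing $X(\sigma)$ by the signed perpendicular offset $t$ from this axis, $T^n$ acts as the affine involution $t\mapsto -t$, whose unique fixed point $t=0$ is the sole orbit of period $n$. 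For every other $u\in X(\sigma)$, $T^{2n}u=u$ but $T^n u\ne u$; since for odd $n$ no divisor of $2n$ lies strictly between $n$ and $2n$, the billiard period of $u$ is exactly $2n$.

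For part (ii), I argue by contradiction: suppose distinct $u, v$ both realize the non-periodic itinerary $\sigma$. Their unfolded orbits are two straight lines passing through the same infinite chain of reflected copies of $P$, and confinement to this chain for all forward times forces them to be parallel, with some common direction $\theta$. Every parallel line between them still lies in the same chain and still has itinerary $\sigma$, so $X(\sigma)$ contains a nondegenerate segment of parallel orbits whose phase-space closure is an invariant set fulfilling the three defining conditions of a flat strip recalled above: contained in finitely many floors, minimal, and bounded by a finite union of saddle connections. A flat strip with finite boundary saddle-connection set must, when unfolded, repeat under some translation parallel to $\theta$; this translation period furnishes a period of the itinerary of every orbit in the strip, contradicting non-periodicity of $\sigma$.

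The most delicate step will be the odd $n$ case in (i): identifying $T^n|_{X(\sigma)}$ as a nontrivial affine involution on a one-parameter family, extracting its unique fixed point, and using the parity of $n$ with minimality of the symbolic period to rule out any intermediate period between $n$ and $2n$. The main geometric obstacle in (ii) will be the passage from a tube of orbits sharing a common infinite itinerary to a bona fide flat strip, and then extracting periodicity of $\sigma$ from the finiteness of its boundary saddle connections.
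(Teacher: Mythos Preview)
This theorem is quoted from \cite{GKT}; the present paper gives no proof of it, so there is nothing in the paper to compare against and I evaluate your sketch on its own merits. Your treatment of part (i) is the standard unfolding argument and is correct: for even $n$ the composed isometry $\Phi$ is forced to be a translation parallel to the unfolded direction, so $T^n$ fixes every point of $X(\sigma)$; for odd $n$ it is a glide reflection with axis parallel to that direction, whence $T^n$ acts on the one-parameter family $X(\sigma)$ as $t\mapsto -t$, and the divisor argument pinning down the exact periods is fine.

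Your plan for part (ii), however, has a genuine gap. You assert that the phase-space closure of the parallel orbits in $X(\sigma)$ satisfies the three defining conditions of a flat strip recalled in the paper, and then that such a flat strip ``must, when unfolded, repeat under some translation parallel to $\theta$''. Neither step is justified, and the second is in fact false. A flat strip in the paper's sense is by definition a \emph{minimal} invariant set: every non-singular orbit in it is dense. If its unfolding repeated under a translation parallel to $\theta$, every orbit in it would be periodic, contradicting that very minimality. Flat strips are exactly the non-periodic minimal components of the billiard dynamics, so invoking them cannot manufacture a period of $\sigma$. As for the first step, you have not shown that the closure of $\bigcup_{k\ge 0} T^k(X(\sigma))$ is minimal (the orbits in $X(\sigma)$ remain at a fixed positive perpendicular distance in the unfolding, so none of them accumulates on another), nor, for irrational $P$, that only finitely many floors are visited.

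The argument in \cite{GKT} is direct and does not pass through the flat-strip notion. Once $X(\sigma)$ is a nondegenerate segment of parallel orbits of perpendicular width $w>0$, one checks that the iterates $T^k(X(\sigma))$ are pairwise disjoint in phase space (any overlap at times $k<l$ would give $\mathrm{shift}^{\,l-k}\sigma=\sigma$), each an interval of transverse length $w$ inside the finite union $\bigcup_i e_i\times(-\tfrac{\pi}{2},\tfrac{\pi}{2})$; a pigeonhole/recurrence argument on this infinite disjoint family then yields the contradiction. That is the step you should supply in place of the flat-strip detour.
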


We remind the reader of the notion of
an unfolded billiard trajectory. Namely,
instead of reflecting the trajectory in a side of $P$ one may reflect $P$ in
this side and unfold the trajectory to a straight line. We leave to the reader the verification of the following fact.

\begin{proposition}\label{p:4}~Let $P$ be a polygon. For every $\delta>0$ there exists an $m=m(\delta)\in\bbn$ such that whenever $u,\tilde u\in V_P$ satisfy $$\vert\proj_2(u)-\proj_2(\tilde u)\vert>\delta$$ and the symbols $\sigma_0(u),\dots,\sigma_m(u),\sigma_0(\tilde u),\dots,\sigma_m(\tilde u)$ exist, then
\begin{equation*}(\sigma_0(u),\dots,\sigma_m(u))\neq (\sigma_0(\tilde u),\dots,\sigma_m(\tilde u)).\end{equation*}\end{proposition}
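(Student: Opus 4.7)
My plan is to unfold both billiard trajectories into straight lines in the plane. If $u, \tilde u \in V_P$ share the same $m$-code $(\sigma_0,\ldots,\sigma_m)$ (we may assume $\sigma_0(u) = \sigma_0(\tilde u)$, else we are already done), then the two unfolded straight-line trajectories pass through a common chain of $m+1$ reflected copies of $P$: they start at $x = \proj_1(u)$ and $\tilde x = \proj_1(\tilde u)$ on the common initial side $e_{\sigma_0}$ of $P_0 = P$, and after $m$ reflections they end at the unfolded images $y$ and $\tilde y$ of $T^m u$ and $T^m \tilde u$, which lie on a common segment of the chain (the unfolded image of the side $e_{\sigma_m}$ along which the $m$-th reflection occurs).

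The first main step is a triangle-inequality bound on the two trajectory lengths. Write $t = |y - x|$, $\tilde t = |\tilde y - \tilde x|$, and let $\theta, \tilde\theta$ denote the two unfolded directions; because $u$ and $\tilde u$ lie on the same side, $|\theta - \tilde\theta| = |\proj_2(u) - \proj_2(\tilde u)| > \delta$. Setting $d := \dia(P)$, both pairs $(x, \tilde x)$ and $(y, \tilde y)$ lie on segments of length $\leq d$, so
$$\bigl|(y - x) - (\tilde y - \tilde x)\bigr| \;\leq\; |y - \tilde y| + |x - \tilde x| \;\leq\; 2d.$$
Squaring and expanding gives $t^2 + \tilde t^2 - 2 t \tilde t \cos(\theta - \tilde\theta) \leq 4 d^2$; since $1 - \cos(\theta - \tilde\theta) \geq 1 - \cos\delta > 0$, we deduce $2 t \tilde t (1 - \cos\delta) \leq 4 d^2$, hence $\min(t, \tilde t) \leq L_*(\delta) := d\sqrt{2/(1-\cos\delta)}$. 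Thus at least one of the two trajectories has total length at most $L_*(\delta)$ through its first $m$ reflections.

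To conclude, I will invoke the standard fact that, for a fixed polygon $P$ and any $L > 0$, the number of reflections of any billiard trajectory in $P$ of length at most $L$ is uniformly bounded by some $N(P, L) \in \bbn$. Setting $m(\delta) := N(P, L_*(\delta)) + 1$ then contradicts the assumption $\sigma_i(u) = \sigma_i(\tilde u)$ for $i \leq m$, and the proposition follows.

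The main obstacle is establishing the uniform reflection bound $N(P, L)$. Near a corner of small angle a trajectory can reflect rapidly, so the bound is not immediate; it follows from the classical wedge billiard estimate, which caps the number of reflections per ``corner excursion'' at $\lceil \pi/\alpha_{\min}(P)\rceil$, combined with a positive lower bound $r_0(P) > 0$ on the length of any trajectory segment between two consecutive corner excursions, which together yield the linear bound $N(P, L) = O(L)$.
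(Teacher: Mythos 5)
The paper gives no proof of Proposition \ref{p:4}; it explicitly states ``We leave to the reader the verification of the following fact,'' so there is no argument of the authors to compare yours against. On its own terms your proof is sound and follows what is likely the intended route: unfold both orbits into straight lines through the common chain of reflected copies of $P$ determined by the shared code, observe that at time $0$ and at time $m$ the two unfolded orbits lie on segments of length at most $d=\dia(P)$, and then use the law of cosines to show that two rays whose directions differ by more than $\delta$ cannot both stay within $2d$ of each other over a long stretch, giving $\min(t,\tilde t)\le d\sqrt{2/(1-\cos\delta)}$. Note that your angle comparison is legitimate only after reducing to $\sigma_0(u)=\sigma_0(\tilde u)$, since $\proj_2$ is measured from the inner normal of the side the foot point lies on; you correctly make that reduction.

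The one place that genuinely needs care, as you acknowledge, is the uniform reflection bound $N(P,L)$. Your two-ingredient sketch (wedge bound per ``corner excursion,'' plus a lower bound on travel between excursions) is morally right but imprecise as stated: ``corner excursion'' and ``between excursions'' need definitions before one can assert a lower bound $r_0(P)$, and one must also handle the case where consecutive excursions occur at the same corner, as well as reflex corners of the non-convex polygons the paper allows. A cleaner formulation that avoids these issues: choose $r_0>0$ small enough that for every $x\in\partial P$ the ball $B(x,r_0)$ meets $\partial P$ only in (at most) two adjacent sides and $B(x,r_0)\cap P$ lies in the corresponding wedge (or half-plane); then $\lceil\pi/\alpha_{\min}\rceil+2$ consecutive reflection points of any orbit cannot all lie in such a ball, since the chords between them would lie in the wedge (by convexity of the ball) and the wedge bound would be violated. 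Hence any $\lceil\pi/\alpha_{\min}\rceil+2$ consecutive reflection points span an orbit segment of length at least $r_0$, which gives $N(P,L)\le(\lceil\pi/\alpha_{\min}\rceil+2)(L/r_0+1)$. With that lemma pinned down, your argument goes through (modulo a harmless off-by-one in your final choice of $m(\delta)$, since an orbit from time $0$ to time $m$ contains $m-1$ interior reflections).
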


An increasing sequence $\{n(i)\}_{i\ge 0}$ of positive integers is called syndetic if the sequence $\{n(i+1)-n(i)\}_{i\ge 0}$ is bounded.
A symbolic itinerary $\sigma$ is said to be ({\it uniformly}) recurrent if for every initial word $(\sigma_0,\dots,\sigma_{m-1})$ there is a ({\it syndetic}) sequence $\{n(i)\}_{i\ge 0}$ such that
$$(\sigma_{n(i)}, \cdots, \sigma_{n(i) + m - 1}) = (\sigma_0,\cdots,\sigma_{m-1})$$ for all $i $.
For a polygon $P$ and billiard map $T\colon~V_P\to V_P$, a point $u=(x,\theta)\in F_P$ is said to be ({\it uniformly}) recurrent if for every $\varepsilon>0$ there is a ({\it syndetic}) sequence $\{n(i)\}_{i\ge 0}$ such that $$\varrho(T^{n(i)}u,u)<\varepsilon$$ for each $i$.

It is easy to see that a ({\it uniformly}) recurrent point $u$ has a ({\it uniformly}) recurrent symbolic itinerary. It is a consequence of Theorem \ref{t1} that the opposite implication also holds true.

 \begin{proposition}\cite{BT2}\label{p1}~Let $P$ be a polygon and $u\in F_P$. Then $\sigma(u)$ is (uniformly) recurrent if and only
 if $u$ is (uniformly) recurrent.\end{proposition}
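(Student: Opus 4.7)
The easy implication I would dispatch immediately: if $T^{n(i)} u \to u$ along a (syndetic) sequence, then, since $u \in F_P$ ensures the finite orbit piece $u, Tu, \ldots, T^{m-1}u$ avoids corners and $T$ is continuous away from preimages of corners, for every fixed $m$ the symbols $\sigma_j(T^{n(i)}u)$ must agree with $\sigma_j(u)$ on $0 \le j < m$ for all large $i$. The (syndetic) structure passes through, so $\sigma(u)$ is (uniformly) recurrent.

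For the converse I would split into two cases. If $\sigma(u)$ is periodic, applying Theorem \ref{t1}(i) to $u \in X(\sigma(u))$ makes $u$ itself periodic, which is trivially uniformly recurrent. So I may assume henceforth that $\sigma(u)$ is non-periodic and (uniformly) recurrent. The core plan is to reduce (uniform) recurrence of $u$ to (uniform) recurrence of $\sigma(u)$ via the following quantitative claim: \emph{for every $\varepsilon > 0$ there exists $m_0$ such that every $n \ge 0$ satisfying $\sigma_j(T^n u) = \sigma_j(u)$ for $0 \le j < m_0$ already fulfils $\varrho(T^n u, u) < \varepsilon$.} Once this is established, the (syndetic) set of $n$ at which the first $m_0$ symbols of $\sigma(T^n u)$ agree with those of $\sigma(u)$ sits inside $\{n : \varrho(T^n u, u) < \varepsilon\}$, giving the desired (uniform) recurrence of $u$.

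The quantitative claim I would attack by contradiction. If it fails, there is $\varepsilon > 0$ and a sequence $n_k$ with $\sigma_j(T^{n_k}u) = \sigma_j(u)$ for $0 \le j < k$ and $\varrho(T^{n_k}u, u) \ge \varepsilon$. By compactness extract a subsequential limit $T^{n_k}u \to u^*$, with $u^* \ne u$. Proposition \ref{p:4} invoked with $\delta \downarrow 0$ pins down $\proj_2(u^*) = \proj_2(u)$. For each fixed $j$, continuity of $T^j$ on a neighborhood of $u^*$ (away from its singular locus) yields $\sigma_j(u^*) = \sigma_j(u)$ at every index where $u^*$'s forward orbit is defined, so granting $u^* \in F_P$ the point $u^*$ lies in $X(\sigma(u))$, which Theorem \ref{t1}(ii) collapses to $\{u\}$, contradicting $u^* \ne u$.

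The main obstacle I anticipate is precisely the membership $u^* \in F_P$: the limit's forward orbit could in principle meet a corner, where $T$ is discontinuous and approximating points $T^{n_k}u$ could split into two branches on either side of the offending vertex. The intended workaround is an unfolding argument: in the unfolded picture the trajectories $T^{n_k}u$ are parallel segments crossing the same sequence of sides for the first $m_k$ reflections, so for large $k$ they must all pass on the one side of the vertex compatible with the fixed next symbol $\sigma_{l+1}(u)$; this singles out a canonical continuation of $u^*$'s orbit whose code is still $\sigma(u)$ and places it in $X(\sigma(u))$. A perturbation alternative would approximate $u^*$ by nearby points with globally defined forward orbits and rerun the matching-code argument, again forcing any accumulation point in $\overline{V_P}$ to equal $u$.
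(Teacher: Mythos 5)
The paper itself offers no proof: it cites \cite{BT2} and remarks only that the hard direction ``is a consequence of Theorem \ref{t1}.'' So I am judging your argument on its own terms, against that hint.

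Your easy direction and your reduction to the quantitative claim are fine, and the overall strategy --- extract a limit $u^*=\lim T^{n_k}u$ with $\varrho(u^*,u)\ge\varepsilon$, use Proposition \ref{p:4} to force $\proj_2(u^*)=\proj_2(u)$, then invoke Theorem \ref{t1}(ii) --- is clearly the intended use of Theorem \ref{t1}. But there is a genuine gap exactly at the point you flag, and neither of your two proposed repairs closes it. You need $u^*\in X(\sigma(u))$ in order to apply Theorem \ref{t1}(ii), and $X(\sigma)$ is, by the paper's definition, the set of points of $V_P$ whose forward itinerary \emph{under the billiard map $T$} equals $\sigma$. The map $T$ simply has no value at a corner: there is no ``canonical continuation'' of $T$ there, because the elastic reflection law is undefined at a vertex. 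Constructing, via unfolding, a geometric ray that threads the vertex and agrees with the approximating trajectories does \emph{not} manufacture a $T$-orbit of $u^*$; it manufactures a different object, and Theorem \ref{t1}(ii) says nothing about such generalized orbits. So if $u^*\notin F_P$ (which you correctly observe can a priori happen --- $u^*$ may land on, or be the start of, a saddle connection), you obtain $u^*\ne u$ and no contradiction; the argument stops. The ``perturbation alternative'' is circular as stated: replacing $u^*$ by a nearby point with full forward orbit only gives finitely many matching symbols with no control on how many, so it does not feed Theorem \ref{t1}(ii) either.

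What is actually needed is to get a contradiction without ever asserting that $u^*$ itself belongs to $F_P$. One standard device (and what I believe \cite{BT2} does in spirit) is to exploit that $\proj_2(u^*)=\proj_2(u)$ and work in the unfolded picture with the whole \emph{pencil} of parallel rays between the unfolded ray of $u$ (which crosses each $\tilde e_{\sigma_j}^{\circ}$ strictly in the interior) and the unfolded ray of $u^*$ (which, being a limit, crosses each $\tilde e_{\sigma_j}$ in the closure). One then argues that rays strictly between them still cross each $\tilde e_{\sigma_j}^{\circ}$, producing a nondegenerate interval of points of $V_P$ all lying in $X(\sigma(u))$ --- contradicting Theorem \ref{t1}(ii) directly, with no appeal to $u^*\in F_P$. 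Making this rigorous (in particular for non-convex $P$, where one must check that the intermediate rays do not hit an intervening reflex vertex) requires care that is not present in your sketch, and this is precisely the content the paper is outsourcing to \cite{BT2}. In short: the skeleton and the citation of Theorem \ref{t1}(ii) and Proposition \ref{p:4} are right, but the step ``grant $u^*\in F_P$'' is a real hole, and the unfolding remark as written does not fill it.
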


For a simply connected $k$-gon $P$ we always consider counterclockwise orientation of its boundary $\partial P$. Then $[x,x']$ ($(x,x')$) denotes a closed (open) arc with outgoing endpoint $x$ and incoming endpoint $x'$.

If $P,Q$ are simply connected polygons, two sequences $\{x_n\}_{n\ge 0}\subset \partial  P$ and
$\{y_n\}_{n\ge 0}\subset \partial  Q$ have the same combinatorial order if
for each non-negative integers $k,l,m$
\begin{equation}\label{e:6}x_k\in [x_l,x_m]~\iff~y_k\in [y_l,y_m].\end{equation}

Suppose that $P$, resp.\ $Q$ is a $k$-gon with sides labeled  $e_1,e_2,\cdots,e_k$, resp.\ $f_1,f_2,\cdots,f_k$.  Thus as in Section \ref{application}
 we can think of them as identified as a common $k$-punctured sphere with marked arc-sides on the equator.
 We say that $D(P) = D(Q)$ if each closed periodic trajectory in $P$ is homotopic to a closed periodic trajectory
 in $Q$ and vice versa. From Corollary \ref{c:1} and Theorem \ref{t1}(i) we obtain

 \begin{proposition}\label{p:2}The following two conditions are equivalent.
 \begin{itemize}
 \item $D(P)=D(Q)$.
 \item $\{\sigma(u)\colon~u\in V_P\text{ is periodic}\}=\{\sigma(v)\colon~u\in V_Q\text{ is periodic}\}$
 \end{itemize}

 \end{proposition}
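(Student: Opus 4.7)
The plan is to use Corollary \ref{c:1} as the bridge between the homotopy data of $D(P), D(Q)$ and the symbolic data $\{\sigma(u): u \in V_P \text{ periodic}\}$. The underlying observation is that for a periodic $u \in V_P$ of $T$-period $n$, the closed billiard trajectory $\hat{\gamma_P}$ through $u$ projects to a nice closed curve $\gamma_P = \proj_1(\hat{\gamma_P})$ whose pillowcase code $\sigma(\gamma_P)$ is a finite block (of length $n$ if $n$ is even, and of length $2n$ if $n$ is odd, so as to enforce the even length and top/bottom alternation required by the definition of a nice curve) whose periodic extension equals the bi-infinite itinerary $\sigma(u)$. Under this identification, the equivalence $\equiv$ on pillowcase codes (cyclic shift) corresponds exactly to equality of bi-infinite periodic sequences up to a time-shift, which in turn is realized by choosing a different bounce as the base point of the orbit. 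Theorem \ref{t1}(i) is what makes this dictionary reversible, since it matches periodic symbolic itineraries with periodic billiard map points (allowing a doubling of period in the odd case).

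For the forward direction, assume $D(P) = D(Q)$ and pick a periodic $u \in V_P$. The free homotopy class $[\gamma_P]$ lies in $D(P) = D(Q)$, so there is a closed billiard trajectory $\hat{\gamma_Q}$ in $Q$ with $\gamma_Q = \proj_1(\hat{\gamma_Q})$ in the same class. By Corollary \ref{c:1}, $\sigma(\gamma_P) \equiv \sigma(\gamma_Q)$. Choosing the bounce point $v \in V_Q$ of $\hat{\gamma_Q}$ that aligns the cyclic shift then gives $\sigma(v) = \sigma(u)$ as bi-infinite sequences, and $v$ is itself periodic. Hence $\sigma(u) \in \{\sigma(v) : v \in V_Q \text{ periodic}\}$, and the symmetric argument yields the set equality.

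For the backward direction, take any closed billiard trajectory $\hat{\gamma_P}$ in $P$ and a bounce point $u$, which is periodic for $T$. The hypothesis produces a periodic $v \in V_Q$ with $\sigma(v) = \sigma(u)$ as bi-infinite sequences; its orbit generates a closed billiard trajectory $\hat{\gamma_Q}$ in $Q$ whose pillowcase code agrees, as an even-length block up to cyclic shift, with that of $\hat{\gamma_P}$, so $\sigma(\gamma_P) \equiv \sigma(\gamma_Q)$. Corollary \ref{c:1} then places $\gamma_P$ and $\gamma_Q$ in the same free homotopy class, so $[\gamma_P] \in D(Q)$; by symmetry $D(P) = D(Q)$. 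The main thing that has to be handled carefully is the parity bookkeeping in passing between bi-infinite itineraries of the billiard map and finite even-length pillowcase codes, especially for orbits of odd billiard period where one must traverse the orbit twice to close up in the pillowcase model; Theorem \ref{t1}(i) precisely encodes this correspondence, so the remaining argument is a straightforward unpacking of definitions.
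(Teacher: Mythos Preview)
Your argument is correct and follows exactly the route the paper indicates: the paper's proof consists of the single sentence ``From Corollary \ref{c:1} and Theorem \ref{t1}(i) we obtain,'' and what you have written is precisely the unpacking of that sentence, including the careful treatment of the odd-period case where the closed geodesic in the pillowcase is a double cover of the billiard-map orbit. No different idea is involved.
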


\begin{definition}\label{d:1}
We say that polygons  $P,Q$ are

{\it homotopically equivalent} - we write $P\equiv_{\text{hom}} Q$ - if they have the same number of sides and
\begin{itemize}

   \item[(P)] $D(P)=D(Q)$;
      \end{itemize}

  {\it order equivalent} - we write $P\equiv_{\text{ord}} Q$ - if there are points $u\in F_{P}$, $v\in F_{Q}$ for which

   \begin{itemize}

   \item[(O1)] $\overline{\{\proj_1(T^nu)\}}_{n\ge 0}=\partial  P$, $\overline{\{\proj_1(S^nv)\}}_{n\ge
   0}=\partial  Q$,
     \item[(O2)] the sequences $\{\proj_1(T^nu)\}_{n\ge 0}$, $\{\proj_1(S^nv)\}_{n\ge 0}$
  have the same combinatorial order;
  \end{itemize}

 {\it code equivalent} - we write $P\equiv_{\text{code}} Q$ - if there are  points $u\in F_{P}$, $v\in F_{Q}$ satisfying

\begin{itemize}
  \item[(C1)] $\overline{\{\proj_1(T^nu)\}}_{n\ge 0}=\partial  P$, $\overline{\{\proj_1(S^nv)\}}_{n\ge
   0}=\partial  Q$,
\item[(C2)] $\sigma(u)=\sigma(v)$.
  \end{itemize}

and {\it weakly code equivalent} - we write $P\equiv_{\text{w-code}} Q$ - if there are non-periodic points $u\in F_{P}$, $v\in F_{Q}$ satisfying

\begin{itemize}\item[(C)] $\sigma(u)=\sigma(v)$.
  \end{itemize}
   The points $u,v$ in (O1-2), (C1-2)  and (C) will be sometimes called the leaders.
 \end{definition}

The verification that these relations are reflexive, symmetric and
transitive are left to the reader.

\begin{remark}\label{r:1}The definition of weak code equivalence is new, code equivalence was
 used in \cite{BT2}. The density requirement (C1)
 was used only when proving the set $\mathcal J(e,\theta)$ from \cite[(4)]{BT2} has nonempty interior. We will see in the proof of Proposition \ref{p:1} that the requirement of density is redundant as long as $P$ is rational  and $u$ is non-exceptional (Assumption \ref{a1}).
\end{remark}

\section{General approach, the map $\mathcal R$}

In this Section we assume that polygons $P,Q$ are homotopically equivalent and $P$ is rational. By our definition $P,Q$ have the same number of sides. In this case we always consider their counterclockwise numbering $e_i=[p_i,p_{i+1}]$ for $P$, resp.\ $f_i=[q_i,q_{i+1}]$ for $Q$, where indices are taken modulo $k$. We sometimes write $e_i\sim f_i$ to emphasize the correspondence of sides $e_i,f_i$.

\begin{definition}
If $u\in F_{P}$ and $v\in F_{Q}$ satisfy $\sigma(u)=\sigma(v)$ we say that $u,v$ are related.
\end{definition}

 \begin{definition}\label{d3}Let $P$ be a polygon and $u,\tilde u\in V_P$. We say that the trajectories of $u,\tilde u$ positively intersect before their symbolic separation if
 \begin{itemize}\item[(p)] for some positive integer $\ell$,  $\sigma_{\ell}(u)\neq\sigma_{\ell}(\tilde u)$, $$\sigma_k(u)=\sigma_k(\tilde u)~\text{ whenever }k\in\{0,\dots,\ell-1\}$$  and for some $k_0\in\{0,\dots,\ell-1\}$, the segments with endpoints $$\proj_1(T^{k_0}u),\proj_1(T^{k_0+1}u)\text{ and }\proj_1(T^{k_0}\tilde u),\proj_1(T^{k_0+1}\tilde u)$$ intersect; similarly, the trajectories of $u,\tilde u$ negatively intersect before their symbolic separation if
 \item[(n)] for some negative integer $\ell$,  $\sigma_{\ell}(u)\neq\sigma_{\ell}(\tilde u)$, $$\sigma_k(u)=\sigma_k(\tilde u)~\text{ whenever }k\in\{\ell+1,\dots,0\}$$  and for some $k_0\in\{\ell,\dots,-1\}$, the segments with endpoints $$\proj_1(T^{k_0}u),\proj_1(T^{k_0+1}u)\text{ and }\proj_1(T^{k_0}\tilde u),\proj_1(T^{k_0+1}\tilde u)$$ intersect.\end{itemize}
 We shortly say that trajectories of $u,\tilde u$ intersect before their symbolic separation if either (p) or (n) is fulfilled.

The pairs $u,\tilde u\in V_P$ and $v,\tilde v\in V_Q$ have the same type of forward symbolic separation (with parameter $k$) if for some nonnegative $k$
\begin{align*}\label{a:1}&\sigma_i(u)=\sigma_i(\tilde u)=\sigma_i(v)=\sigma_i(\tilde v), ~i=0,\dots,k-1,\\
  &\sigma_k(u)=\sigma_k(v)\neq\sigma_k(\tilde u)=\sigma_k(\tilde v)\notag.
 \end{align*}
The same type of backward symbolic separation of pairs $u,\tilde u\in V_P$ and $v,\tilde v\in V_Q$ (with some non-positive parameter) is defined analogously.

\end{definition}

\begin{proposition}\label{p5}
Let $P,Q$ be homotopically equivalent, assume that $u,v$, resp.\ $u',v'$ are related.
\begin{itemize}
\item[(i)] The pairs $u,u'$ and $v,v'$  have the same type of symbolic separation. This is also true for the pairs $T^mu,T^nu$ and $S^mv,S^nv$ and each nonnegative $m,n$.
\item[(ii)]If for some nonnegative $m,n$, $\proj_2(T^mu)=\proj_2(T^nu)$ then trajectories of $S^mv,S^nv$ cannot intersect before their symbolic separation.\end{itemize}\end{proposition}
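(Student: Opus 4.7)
Plan. Part (i) follows immediately from unpacking the definition of ``related''. Since $u,v$ are related we have $\sigma_i(u)=\sigma_i(v)$ for every $i\ge 0$, and likewise $\sigma_i(u')=\sigma_i(v')$. Hence the smallest index $k\ge 0$ at which $\sigma(u)$ and $\sigma(u')$ first disagree equals the smallest index at which $\sigma(v)$ and $\sigma(v')$ first disagree, and the four symbols at position $k$ pair up across $P$ and $Q$ exactly as required by Definition~\ref{d3}. For the second assertion, observe that the shifted pairs $(T^mu,S^mv)$ and $(T^nu,S^nv)$ remain related, since $\sigma_i(T^mu)=\sigma_{i+m}(u)=\sigma_{i+m}(v)=\sigma_i(S^mv)$ and similarly for $n$, so the first argument applies to them.

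For part (ii) I would argue by contradiction. Let $\ell\ge 1$ be the common forward separation position of $(T^mu,T^nu)$ and $(S^mv,S^nv)$ supplied by part (i), and let $\tau_0,\dots,\tau_{\ell-1}$ be the common initial symbols; suppose some pair of consecutive segments of the trajectories of $S^mv$ and $S^nv$ cross at a point $x\in Q^\circ$ at step $k_0<\ell$, so Definition~\ref{d3}(p) is satisfied in $Q$ (the case of backward separation being symmetric). The parallelism $\proj_2(T^mu)=\proj_2(T^nu)$ combined with the common initial word means that, when $P$ is unfolded along $e_{\tau_0},\dots,e_{\tau_{\ell-1}}$, the trajectories $T^mu$ and $T^nu$ straighten to two distinct parallel straight lines -- distinct because otherwise $T^mu=T^nu$ and no separation at step $\ell$ could occur. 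In particular, the $i$-th segments of $T^mu$ and $T^nu$ do not intersect in $P$ for any $0\le i<\ell$.

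The contradiction must come by transporting the crossing at $x$ back to $P$ through the homotopy equivalence. Using density of periodic points (Theorem~\ref{dense}) I would approximate the finite-time pieces of $T^mu$ and $T^nu$ by a pair of periodic orbits of $P$ sharing a direction close to $\proj_2(T^mu)$, retaining the initial word $\tau_0,\dots,\tau_{\ell-1}$ and the separation at step $\ell$; by the same unfolding argument these periodic orbits do not cross on the initial window. By Proposition~\ref{p:2} and Corollary~\ref{c:1}, the corresponding free-homotopy classes are represented in $Q$ by closed billiard orbits with identical pillowcase codes, and Theorem~\ref{t1}(i) supplies the essential uniqueness of such orbits. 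A careful application of Lemma~\ref{l:1} together with Proposition~\ref{p:4} then upgrades this symbolic agreement to Hausdorff proximity of the approximating orbits in $Q$ and of $S^mv,S^nv$ on the relevant window, forcing the approximating periodic orbits in $Q$ to cross near $x$. But their homotopy-equivalent preimages in $P$ are the parallel periodic orbits, which do not cross on the initial window -- the desired contradiction.

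The principal obstacle is exactly this bridging step: passing from the purely topological/homotopical data supplied by $P\equiv_{\text{hom}}Q$ to the pointwise geometric control needed to compare crossings of trajectory segments. The rationality of $P$ enters essentially through Theorem~\ref{dense} and the uniqueness part of Theorem~\ref{t1}(i), while Proposition~\ref{p:4} is what one leverages to convert pillowcase-code coincidence on sufficiently long prefixes into geometric closeness of the initial finite segments, allowing a concrete crossing in $Q$ to be promoted to a contradictory crossing of the parallel orbits in $P$.
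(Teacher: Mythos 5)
Part (i) of your argument is correct and matches what the paper does: the paper simply says it ``follows from our definitions of the same separation and related points,'' and your unpacking (equality of codes forces equality of first-disagreement indices, and shifting by $m$ or $n$ preserves the relation) is exactly that.

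For part (ii) there is a genuine gap, and it goes beyond the ``principal obstacle'' you flag yourself. The paper proves (ii) by citing \cite[Proposition~4.3]{BT2}, which is a direct combinatorial statement that needs only that $u,v$ are related (same bi-infinite symbolic itinerary): one argues with the unfolding of the \emph{shared} symbolic corridor of $T^m u,T^n u$ (resp.\ $S^m v, S^n v$). In that corridor each pair of trajectories becomes a pair of straight chords; parallelism of $T^m u, T^n u$ forces zero crossings, hence a fixed order at both the forward and backward separation, and these orders are read off from the separation symbols, which coincide for the $Q$-pair because the points are related. A crossing of $S^m v, S^n v$ before separation would flip the order and contradict the matching separation symbols. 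Notice this never touches $P\equiv_{\text{hom}}Q$, periodic orbits, or the pillowcase machinery -- it is purely about the codes of $u$ and $v$.

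Your route instead tries to \emph{manufacture} the argument out of density of periodic points, homotopy transfer (Proposition~\ref{p:2}, Corollary~\ref{c:1}), and Lemma~\ref{l:1}/Proposition~\ref{p:4}, and the decisive step is then unjustified. Concretely: you want to conclude that if the approximating periodic orbits in $Q$ cross near $x$, their ``homotopy-equivalent preimages'' in $P$ must also cross -- but free homotopy equivalence of closed curves in the pillowcase does not control geometric intersections between two orbits; two closed curves can lie in the same ordered pair of free homotopy classes and still have different intersection patterns, and billiard orbits in a flat (not negatively curved) polygon need not minimize mutual intersection number in their class. Likewise, Lemma~\ref{l:1} only says small Hausdorff perturbations of an admissible curve remain admissible; it does not convert agreement of symbolic codes on a long prefix into Hausdorff proximity of whole trajectory segments, and Proposition~\ref{p:4} runs in the opposite direction (direction separation implies code separation). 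So the chain ``same codes $\Rightarrow$ Hausdorff-close $\Rightarrow$ crossing in $Q$ transfers to $P$'' has two unsupported links. The fix is to drop the homotopy/density scaffolding entirely and argue combinatorially about crossings and separation symbols in the shared unfolded corridor, as the cited proposition does.
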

\begin{proof}The part (i) follows from our definitions of the same separation and related points. For (ii) see \cite[Proposition 4.3]{BT2}.\end{proof}

\noindent{\bf The map $\mathcal R$.}

Let $P,Q$ be homotopically equivalent, $P$ rational. In what follows we define a map
$$\mathcal R\colon~F_{P,\infty}\to \bigcup_f\left(f\times (-\frac{\pi}{2},\frac{\pi}{2})\right),$$
where $F_{P,\infty}$ is the set of points with bi-infinite orbit which are not parallel to a saddle connection, more formally the set of all non-exceptional points from $F_P(T)\cap F_P(T^{-1})$; the union is taken over all sides of $Q$.

Choose $u=(x,\theta)\in F_{P,\infty}$ with $x\in e=e_{\sigma_0(u)}$. We know from Theorem \ref{t1}(i) that $\sigma(u)$ is non-periodic. By Theorem \ref{dense}
there exists a sequence
$(u_n)_{n=1}^{\infty}$ of periodic points which converge to the point $u$ with respect to $\varrho$ from (\ref{rho}), i.e., $\lim_n\varrho(u_n,u)=0$. Fix $u$ and the approximating sequence
$(u_n)$.
  \begin{proposition}\label{c:2}If $v_n$ is related to $u_n$ then the limit
$ v = \lim_{n \to \infty} v_n $ exists and
$\vartheta=\lim_n\proj_2(v_n)\in (-\frac{\pi}{2},\frac{\pi}{2})$. Furthermore
the point $v$ depends only on $u$ and not on the approximating sequences $u_n,v_n$.
\end{proposition}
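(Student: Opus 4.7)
My plan has four steps. First, I would produce the periodic $v_n$: by Proposition \ref{p:2} the equality $D(P)=D(Q)$ forces the set of symbolic codes of periodic billiard orbits in $P$ to coincide with that in $Q$, so for each periodic $u_n$ one can pick a periodic $v_n\in V_Q$ with $\sigma(v_n)=\sigma(u_n)$. I will use two elementary observations throughout: $\sigma(u)$ is non-periodic (otherwise Theorem \ref{t1}(i) would make $u$ periodic, forcing $\proj_2(u)$ to lie in a saddle-connection direction and contradicting $u\in F_{P,\infty}$); and for every fixed window of indices, the symbols $\sigma_i(u_n)$ agree with $\sigma_i(u)$ once $n$ is large, hence so do $\sigma_i(v_n)$ by relatedness, because the coding is continuous at any point whose orbit avoids corners.

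Second, I would show $\proj_2(v_n)\to\vartheta\in[-\pi/2,\pi/2]$. If two subsequences gave $\proj_2(v_{n_j})\to\vartheta_1\neq\vartheta_2\gets\proj_2(v_{n_j'})$, set $\delta=|\vartheta_1-\vartheta_2|/3$ and take $m=m(\delta)$ from Proposition \ref{p:4}. For $j,j'$ large enough the length-$m$ prefixes of $\sigma(v_{n_j})$ and $\sigma(v_{n_j'})$ both coincide with that of $\sigma(u)$, while the two directions differ by more than $\delta$, contradicting Proposition \ref{p:4}.

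Third, and this is the main obstacle, I would exclude $\vartheta\in\{-\pi/2,\pi/2\}$ and then establish convergence of the foot points. A tangential limit relative to the arc-side $f_{\sigma_0(u)}$ forces the periodic cylinders containing $v_n$ to collapse onto a saddle-loop boundary of $Q$; pulling back via relatedness and the symmetric version of Proposition \ref{p:2}, one derives that $\sigma(u)$ is realized in $P$ by a configuration lying on a saddle connection, so $\proj_2(u)$ would be exceptional, contradicting $u\in F_{P,\infty}$. This is the part where genuine geometric work on nearly-grazing orbits is required. Once $\vartheta\in(-\pi/2,\pi/2)$ is in hand, the foot points $\proj_1(v_n)\in f_{\sigma_0(u)}$ converge: any two subsequential limits $v^1,v^2$ share direction $\vartheta$, have forward itinerary equal to $\sigma(u)$ by continuity of the coding at non-corner orbits, and hence both lie in $X(\sigma(u))\cap V_Q$; since $\sigma(u)$ is non-periodic, Theorem \ref{t1}(ii) forces $v^1=v^2=:v$.

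Finally, independence of $v$ from the sequences $(u_n),(v_n)$ is a routine interleaving argument. If $(u_n'),(v_n')$ is an alternative choice, the interleaved sequence $u_1,u_1',u_2,u_2',\ldots$ still converges to $u$ with associated related sequence $v_1,v_1',v_2,v_2',\ldots$; the previous steps apply verbatim to this interleaved data and produce a common limit, so $\lim v_n=\lim v_n'$. This handles simultaneously the freedom in choosing the approximating $u_n$ and the freedom in selecting $v_n$ within $X(\sigma(u_n))\cap V_Q$. The main obstacle is clearly step three, where grazing directions must be ruled out by tying geometric information about $Q$ back to the symbolic rigidity encoded in $D(P)=D(Q)$.
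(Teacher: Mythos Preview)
Your steps 2 and 4 (convergence of the direction via Proposition~\ref{p:4}, and independence of the limit via interleaving) match the paper's argument essentially verbatim.

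The genuine gap is in step 3, the exclusion of $\vartheta\in\{-\pi/2,\pi/2\}$. Your sketch (``tangential limit forces the periodic cylinders containing $v_n$ to collapse onto a saddle-loop boundary of $Q$; pulling back via relatedness \dots\ $\proj_2(u)$ would be exceptional'') does not work as stated, and I do not see how to make it work. Nothing is known about $Q$ at this stage---rationality of $Q$ is proved only later---so there is no control on how cylinders degenerate there; and even if the $v_n$ did accumulate on a saddle loop in $Q$, the only link back to $P$ is the equality of finite codes $\sigma(u_n)=\sigma(v_n)$, which says nothing about $u$ lying in a saddle-connection direction. The information flow you propose goes the wrong way.

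The paper's device is quite different and avoids any geometry of nearly-grazing orbits. Suppose $\vartheta=-\pi/2$. Using density of periodic points in the \emph{rational} polygon $P$ (Theorem~\ref{dense}), pick a single auxiliary periodic $u'$ with $\proj_1(u')\in e^\circ$, $\proj_1(u')>x$, and $\proj_2(u')\in(-\pi/2,\theta)$, and let $v'$ be related to $u'$ in $Q$. Since $v'$ is periodic, $\proj_2(v')\in(-\pi/2,\pi/2)$. For large $n$ one then has $\proj_2(u_n)>\proj_2(u')$ while $\proj_2(v_n)<\proj_2(v')$; this forces the pairs $(u_n,u')$ and $(v_n,v')$ to have \emph{different} types of symbolic separation, contradicting Proposition~\ref{p5}(i). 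So the whole obstacle dissolves into a one-line comparison argument once you bring in an auxiliary periodic point on the $P$ side.

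A smaller point: your foot-point argument assumes each subsequential limit $v^i$ lies in $F_Q$ so that $\sigma(v^i)=\sigma(u)$ makes sense, but you have not excluded that $v^i$ hits a corner. The paper sidesteps this by taking two subsequential limits $\alpha<\beta$ in $f_{\sigma_0(u)}$ and arguing that \emph{every} $(y,\vartheta)$ with $y\in(\alpha,\beta)$ is squeezed between related $v_{n}$'s on both sides, hence lies in $F_Q$ with code $\sigma(u)$; Theorem~\ref{t1}(ii) then gives the contradiction on an interval rather than at the two endpoints.
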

 \begin{proof} On the one hand, since $v_n$ is related to $u_n$, by Proposition \ref{p5} the pairs $u_n,u_m$ and $v_m,v_n$ have the same type of forward, resp.\ backward symbolic separation with parameter $k(m,n)$, resp.\ $\ell(m,n)$. We assume that $u$ has a bi-infinite symbolic itinerary and $\lim_n\varrho(u_n,u)=0$. It implies that
 $\lim_{m,n\to\infty}k(m,n)=\lim_{m,n\to\infty}-\ell(m,n)=\infty$. On the other hand, if the limit $\lim_n\proj_2(v_n)$ does not exist, for some positive $\delta$ there are arbitrarily large $m,n$ such that $\vert\proj_2(v_m)-\proj_2(v_n)\vert>\delta$. Then by Proposition \ref{p:4} applied to the maps $S,S^{-1}$ (for $Q$), $$\max\{k(m,n),-\ell(m,n)\}\le m(\delta).$$ This shows that the limiting
 angle $\vartheta\in [-\frac{\pi}{2},\frac{\pi}{2}]$  exists.

 To the contrary, let for instance $\vartheta=-\frac{\pi}{2}$. By Theorem \ref{dense} we can consider a periodic point $u'$ satisfying \begin{equation*}\proj_1(u')\in e^{\circ},~x<\proj_1(u'),~\proj_2(u')\in (-\frac{\pi}{2},\theta).\end{equation*} Now let $v'$ be a periodic point in $Q$ related to $u'$. Clearly $\proj_2(v')\in (-\frac{\pi}{2},\frac{\pi}{2})$ and for any sufficiently large $n$
\begin{equation*}\label{e:3}\proj_2(u_n)>\proj_2(u')\ \ \ \ \&\ \ \ \ \proj_2(v_n)<\proj_2(v'),\end{equation*}
hence the pairs $u_n,u'$, $v_n,v'$ do not have the same symbolic separation, a contradiction with Proposition \ref{p5}. The equality $\vartheta=\frac{\pi}{2}$ can be disproved analogously.

 If the limit $\lim_{n\to\infty}\proj_1(v_n)$ does not exist, there are distinct points $\alpha,\beta\in f=f_{\sigma_0(u)}$ such that (w.l.o.g.)
\begin{equation*}\lim_{n\to\infty}\proj_1(v_{2n+1})=\alpha,~\lim_{n\to\infty}\proj_1(v_{2n})=\beta\end{equation*}
and $[\alpha,\beta]\subset f=f_{\sigma_0(u)}$. Then for each foot point $y$ in $(\alpha,\beta)$ necessarily $v=(y,\vartheta)\in F_Q$ and $\sigma(v)=\sigma(u)$, what is by Theorem \ref{t1} impossible for non-periodic $\sigma(u)$. Thus, $\lim_nv_n=v=(y,\vartheta)$ exists with
$\vartheta \in (\frac{-\pi}{2}, \frac{\pi}{2})$.

Next suppose that $\tilde{v}_n$ is another point  related to $u_n$. Consider the sequence
$u'_n$ defined by $u'_{2n} = u_n$ and $u'_{2n+1} = u_n$ and the sequence
 of related points $v'_n$ defined by $v'_{2n} = v_n$ while $v'_{2n+1} = \tilde{v}_n$.
 By the above the limit $\lim_{n \to \infty} v'_n$ exists, thus by passing to subsequences
 $\lim_{n \to \infty} v_n = \lim_{n \to \infty} \tilde{v}_n$.

Suppose now that we have two sequences of periodic points $u_n \to u$ and $\tilde{u}_n \to u$
with there associated sequences of related point $v_n$ and $\tilde{v}_n$.
Create a new sequence $u'_n$ by setting $u'_{2n} = u_n$ and $u'_{2n+1} = \tilde{u}_n$
and similarly create a sequence $v'_n$.
Again applying the above the limit $\lim_{n \to \infty} v'_n$ exists and $\lim_{n \to \infty} v_n = \lim_{n \to \infty} \tilde{v_n}$.
\end{proof}
Thus the map  $$\mathcal R(u)=v$$ is well defined.

\begin{lemma}\label{l:6}For $u\in F_{P,\infty}$ and each $n\in\bbz$ for which the element $S^n\mathcal R(u)$ is defined well,
$$ \mathcal R(T^nu)=S^n\mathcal R(u).$$
In particular, if $\mathcal R(u)\in F_Q$ then $u$ and $\mathcal R(u)$ are related.
\end{lemma}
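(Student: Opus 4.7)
The strategy is to lift the identity to the level of the periodic approximants that define $\mathcal R$. Fix $u\in F_{P,\infty}$ and, as in Proposition \ref{c:2}, choose a sequence of periodic points $u_m\to u$ for $T$ together with related periodic points $v_m$ for $S$, so that $v_m\to v:=\mathcal R(u)$. The plan is to verify that, for the given $n\in\bbz$, the shifted sequences $(T^n u_m)$ and $(S^n v_m)$ play the roles of approximants for $\mathcal R(T^n u)$, and that the limit of $S^n v_m$ equals $S^n v$.

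Four observations drive the argument. First, since $F_{P,\infty}$ (non-exceptional, bi-infinite) is $T$-invariant we have $T^n u\in F_{P,\infty}$, so $\mathcal R(T^n u)$ is defined; moreover the whole orbit of $u$ avoids corners, so $T^n$ is continuous at $u$ and $T^n u_m\to T^n u$. Second, each $T^n u_m$ is again a periodic point of $T$, since the orbit of a periodic point is regular. Third, symbolic equality is invariant under a common shift, and so $\sigma_i(T^n u_m)=\sigma_{n+i}(u_m)=\sigma_{n+i}(v_m)=\sigma_i(S^n v_m)$, i.e.\ $S^n v_m$ is related to $T^n u_m$. Fourth, the hypothesis that $S^n\mathcal R(u)$ is well defined means that $S^n$ is continuous at $v$, so $S^n v_m\to S^n v$. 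Applying Proposition \ref{c:2} to the approximating sequence $T^n u_m\to T^n u$ now yields
\[
\mathcal R(T^n u)\;=\;\lim_m S^n v_m\;=\;S^n v\;=\;S^n\mathcal R(u).
\]

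For the ``in particular'' statement, suppose $v=\mathcal R(u)\in F_Q$. Then $S^n v$ is defined for every $n\ge 0$, so by the first part $\mathcal R(T^n u)=S^n v$ for all such $n$. Reading off the side label of the foot point, $\sigma_0(S^n v)=\sigma_n(v)$; on the other hand, since $T^n u$ is not at a corner, for $m$ large the approximants $T^n u_m$ lie on the fixed side $e_{\sigma_n(u)}$, and by relatedness $S^n v_m$ lies on the corresponding side of $Q$, which in the limit gives $\sigma_0(S^n v)=\sigma_n(u)$. Hence $\sigma_n(u)=\sigma_n(v)$ for all $n\ge 0$, i.e.\ $u$ and $\mathcal R(u)$ are related. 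The only subtle point is the continuity of the billiard maps at the limit points, but this is exactly what the hypotheses $u\in F_{P,\infty}$ and ``$S^n\mathcal R(u)$ is defined'' provide; everything else reduces to the shift-invariance of symbolic codes and a direct application of Proposition \ref{c:2}.
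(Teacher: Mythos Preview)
Your proof is correct and is precisely a detailed unpacking of the paper's one-line argument (``It follows from the definition of $\mathcal R$ and the continuity of the billiard maps $T,S$''): you use periodic approximants in the definition of $\mathcal R$, shift them by $T^n$ and $S^n$, and invoke continuity to pass to the limit. The approach is essentially identical to the paper's, only made explicit.
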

\begin{proof}It follows from the definition of $\mathcal R$ and the continuity of the billiard maps $T,S$.\end{proof}

A $u\in V_P(T)$ is an element of a saddle connection if there are distinct nonnegative $k$ and non-positive $\ell$ such that $\proj_1(T^ku),\proj_1(T^{\ell}u)\in C_P$. Denote
$$G_P=\{u\in V_P\colon~u\text{ is an element of a saddle connection}\}.$$
Clearly $G_P(T)=G_P(T^{-1})$ and since each saddle connection contains finitely many elements and there are countably many saddle connections, the set $G_P$ is countable.

\begin{lemma}\label{l:7}(i) For each $v\in \mathcal R(F_{P,\infty})\setminus G_Q$, $\card~\mathcal R^{-1}(v)=1$.\\ (ii) For each $v\in \mathcal R(F_{P,\infty})\cap G_Q$, $\card~\mathcal R^{-1}(v)\le 2$.
\end{lemma}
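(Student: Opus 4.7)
The plan is to reduce both parts to bounding the number of distinct forward symbolic itineraries $\sigma(u)|_{i\ge 0}$ that can arise for $u \in \mathcal{R}^{-1}(v)$. Since $u\in F_{P,\infty}$ is non-exceptional and hence not periodic (by Theorem \ref{t1}(i), a periodic point in $X(\sigma)$ forces $\sigma$ periodic, and periodic directions are exceptional), the forward itinerary $\sigma(u)$ is non-periodic, so by Theorem \ref{t1}(ii) the set $X(\sigma(u))$ has at most one element. Hence $u$ is uniquely determined by its forward itinerary, and it suffices to count forward itineraries. Furthermore, by the construction of $\mathcal R$, if $u_n\to u$ are periodic approximants with related partners $v_n\to v$, then for each $i\ge 0$
\begin{equation*}
\sigma_i(u)=\lim_{n\to\infty}\sigma_i(u_n)=\lim_{n\to\infty}\sigma_i(v_n),
\end{equation*}
the left equality using continuity of $T^i$ at $u\in F_{P,\infty}$, so we only need to control the possible limits $\lim_n \sigma_i(v_n)$.

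\textbf{Part (i).} Since $v\notin G_Q$, at least one of the forward or backward orbits of $v$ under $S$ is fully defined. Assume first that the forward orbit of $v$ never hits a corner. Then for each $i\ge 0$, $S^i$ is continuous at $v$ and $\sigma_i(v_n)\to\sigma_i(v)$, so the forward itinerary of every $u\in\mathcal R^{-1}(v)$ coincides with $\sigma(v)|_{i\ge 0}$. Theorem \ref{t1}(ii) then yields $|\mathcal R^{-1}(v)|\le 1$. If instead only the backward orbit of $v$ is well-defined, argue symmetrically with $T^{-1}$ and the backward itinerary.

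\textbf{Part (ii).} Now $v\in G_Q$, so the forward orbit of $v$ hits a first corner $c_+$ at some time $k_+\ge 1$. For $0\le i<k_+$ the reasoning of (i) still gives $\sigma_i(u)=\sigma_i(v)$. The novelty is that $S^{k_+}$ is not defined at $v$: its singularity at $v$ is a smooth $1$-codimensional curve $\ell_+$ in $V_Q$ passing through $v$ (the set of nearby points whose trajectory hits $c_+$ at time $k_+$), and a neighbourhood of $v$ is split by $\ell_+$ into two open components. Each periodic $v_n$ lies on one side of $\ell_+$, so after passing to a subsequence we may assume all approximants are on the same side $s\in\{+,-\}$. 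On that side, $S^{k_+}$ is continuous and the images $S^{k_+}(v_n)$ converge to a specific point $w_s\in V_Q$ near the side of $Q$ adjacent to $c_+$ on side $s$; iterating, the limit forward tail $(\sigma_{k_+ +j}(v_n))_{j\ge 0}$ coincides with the forward itinerary of the orbit of $w_s$. Even if this limit orbit further encounters a saddle connection, the same continuity of the one-sided restriction of $S^{k_+}$ propagates the side attribute consistently, so $(\sigma_{k_++j}(u))_{j\ge 0}$ is a single sequence $\tau^{(s)}$ depending only on $s\in\{+,-\}$. Thus the forward itinerary of $u$ takes at most two forms, and Theorem \ref{t1}(ii) gives $|\mathcal R^{-1}(v)|\le 2$.

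\textbf{Main obstacle.} The delicate point in (ii) is justifying that the side attribute at $v$ uniquely determines the entire forward tail even when further corners are encountered by $w_s$'s orbit. Concretely, one must verify that the image under $S^{k_+}$ of one component of a neighbourhood of $v$ minus $\ell_+$ lies entirely on a single side of each subsequent singularity curve that the limit orbit touches. This is a local check performed in the unfolded picture, where $v$'s trajectory is a straight line segment from $c_-$ to $c_+$, and the two ``continuations'' past $c_+$ correspond precisely to reflecting off one of the two sides of $Q$ incident to $c_+$; the continuity of the billiard map on each side then transports this dichotomy to all further corner encounters without introducing additional branching.
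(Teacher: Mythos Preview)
Your treatment of (i) is correct and essentially parallel to the paper's: both exploit that $v\notin G_Q$ forces at least one half-orbit of $v$ to avoid corners, after which continuity of $S^i$ (or $S^{-i}$) at $v$ pins down the itinerary of every preimage. The paper phrases this contrapositively (two distinct preimages force corners in both time directions) and handles the edge case $\proj_1(v)\in C_Q$ by shifting to $\mathcal R(Tu)$ via Lemma~\ref{l:6}; you should address that case as well.

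Your argument for (ii) has a genuine gap, and it is exactly at the ``main obstacle'' you flag. The claim that the side chosen at the first corner $c_+$ determines the side at every subsequent corner is false. In phase space, the singularity curve $\ell_+=\{v':\text{orbit of }v'\text{ reaches }c_+\text{ at time }k_+\}$ and the next singularity curve $\ell'$ (points on side $s$ of $\ell_+$ reaching $c'$ at time $k_++j$) are two curves through $v$ that generically meet transversally, so $\ell'$ still separates the side-$s$ half-neighbourhood of $v$ into two pieces. Equivalently, in the unfolding $c_+$ and $c'$ both lie on the limit line $L$, and a nearby line passing above $c_+$ may pass either above or below $c'$. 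Hence the forward tail is not a function $\tau^{(s)}$ of $s$ alone, and your dichotomy does not bound $\card\,\mathcal R^{-1}(v)$ by~$2$.

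The paper's proof of (ii) proceeds quite differently and uses the $P$-side geometry in an essential way. From three preimages it extracts two, $u,u'$, with footpoints in the \emph{same} side of $P$, then chooses ``positive'' periodic approximating sequences $(u_n),(u'_n)$ (no pair positively intersects before symbolic separation). Proposition~\ref{p5} transfers this non-intersection property to the related sequences $(v_n),(v'_n)$ in $Q$; but the corner geometry at $\mathcal R(T^ku)\in C_Q$ forces such an intersection among the $v_n$'s, giving the contradiction. Your purely $Q$-side counting argument has no analogue of this transfer step, which is what actually prevents further branching.
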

\begin{proof}(i) Suppose that  $\card~\mathcal R^{-1}(v)>1$ for some
$v\in \mathcal R(F_{P,\infty})$. Choose distinct points $u,u'\in \mathcal R^{-1}(v)$ and using Theorem \ref{dense} consider sequences $(u_n)_{n=1}^{\infty}$, $(u'_n)_{n=1}^{\infty}$ of periodic points satisfying \begin{equation}\label{e:1}\lim_n\varrho(u_n,u)=\lim_n\varrho(u'_n,u')=0.\end{equation}
Since $u,u' \in F_{P,\infty}$ this implies that $\sigma(u_n) \to \sigma(u)$ and $\sigma(u'_n) \to \sigma(u')$.
Let $v_n$ be related to  $u_n$, and $v'_n$ to $u'_n$. Proposition \ref{c:2} implies that $\lim_nv_n=\lim_nv'_n=v$.
Since $u,u' \in F_{P,\infty}$, they are
not periodic and have bi-infinite orbits, thus Theorem
\ref{t1} implies that their forward codes $\sigma(u),\sigma(u')$ differ at some time, let $k \ge 0$ be the first such time.  Similarly there backwards
codes differ at some first time $\ell \le 0$.

Suppose first that $\proj_1(v) \not \in C_Q$, then $\ell < 0 < k$.
 For each sufficiently large $n$ the pairs $u,u'$, $u_n,u'_n$ and $v_n,v'_n$ have the same type of forward, resp.\ backward symbolic separation, i.e.\
 for each sufficiently large $n$  the bi-infinite codes of $u_n$ and $u'_n$ agree for all
 $\ell < i < k$ and disagree at times $\ell$ and $k$ (see Proposition \ref{p5}).  Since $\lim_nv_n=\lim_nv'_n=v$, this implies that $S^kv,S^{\ell}v\in C_Q$ and thus $v\in G_Q$.

We now turn to the case when $\proj_1(v) \in C_Q$ and thus $k=\ell=0$. Then $\proj_1(Sv)\notin C_Q$ since by assumption $v \not \in G_Q$. Using Lemma \ref{l:6}, one can apply the previous reasoning to the points $Sv$, $Tu$, $Tu'$, $Tu_n$, $Tu'_n$, $Sv_n$ and $Sv'_n$ with respect to billiard maps $T^{-1},S^{-1}$ to show that $Sv\in G_Q(S^{-1})$ hence also $v\in G_Q(S)$, a contradiction.

(ii) Assume that for some $v\in \mathcal R(F_{P,\infty})\cap G_Q$, $\card~\mathcal R^{-1}(v)> 2$. If $v \not \in C_Q$ then the footpoint $\proj_1(u)$
of each $u \in \mathcal R^{-1}(v)$ belongs to one and the same side of $P$.
If $v \in C_Q$ then
any point in $\mathcal R^{-1}(v)$ must have
footpoint in one of two consecutive sides $e_i$ or $e_{i+1}$, where $v \in f_i \cap f_{i+1}$ and $e_i \sim f_i,\ e_{i+1} \sim f_{i+1}$.
In either case there are two points $u,u'\in R^{-1}(v)$ whose footpoints belong to the same side of $P$ and there exists the least positive integer $k$ such that $\sigma_k(u)\neq \sigma_k(u')$. Clearly,
$\proj_1(\mathcal R(T^{k}u))=\proj_1(\mathcal R(T^{k}u'))\in C_Q$. Let $\ell\in\{0,\dots,k-1\}$ be the largest value satisfying $\proj_1(\mathcal R(T^{\ell}u))\in C_Q$ if it exists or $\ell=0$ otherwise.

Using Definition \ref{d3}, a sequence of periodic points $(u_n)_{n=1}^{\infty}$ fulfilling (\ref{e:1}) will be called positive if the trajectories of no pair $u_m,u_n$ positively intersect before their symbolic separation. Consider positive sequences $(u_n)_{n=1}^{\infty}$, $(u'_n)_{n=1}^{\infty}$ of periodic points satisfying (\ref{e:1}) and let  $(v_n)_{n=1}^{\infty}$, $(v'_n)_{n=1}^{\infty}$ be sequences of related periodic points. By our definition of the numbers $k,\ell$ for one of those sequences - w.l.o.g. we can assume that it is $(v_n)$ - for some large $n$ the segments with endpoints
\begin{equation*}\proj_1(S^{\ell}v_n),~\proj_1(S^kv_n)\text{ and }\proj_1(\mathcal R(T^{\ell}u)),~\proj_1(\mathcal R(T^{k}u))\end{equation*}
intersect. But then also for that $n$ and a sufficiently large $m$,  $v_m,v_n$ positively intersect before their symbolic separation. It is not possible by our choice of $(u_n)$ and Proposition \ref{p5}, a contradiction.
\end{proof}

In a rational polygon we say that a point $u$ is
generic if it is non-exceptional, has bi-infinite orbit
and the billiard  map  restricted to the skeleton $V_P \cap R_{\proj_2(u)}$ of the
invariant surface  $R_{\proj_2(u)}$
has a single invariant measure (this measure is then automatically the
measure $\mu$).

\begin{lemma}\label{c:3}Let $P$,$Q$ be homotopically equivalent polygons with $P$ rational. There are related points $u\in F_P$ and $v\in F_Q$ with $u$ generic.  \end{lemma}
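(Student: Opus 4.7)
The plan is to combine the Kerckhoff--Masur--Smillie theorem with Lemma \ref{l:7}: the former will supply uncountably many generic $u\in V_P$, and the latter, together with Lemma \ref{l:6}, will let me single out one for which $\mathcal{R}(u)\in F_Q\cap F_Q(S^{-1})$, so that $u$ and $v:=\mathcal{R}(u)$ are automatically related.

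First, since $P$ is rational, by Kerckhoff--Masur--Smillie there is a non-exceptional direction $\theta_0\in\S$ such that the billiard map on the invariant surface $V_P\cap R_{\theta_0}$ is uniquely ergodic with respect to $\mu$. On this one-dimensional skeleton, the set of points whose orbit fails to be bi-infinite (those lying on some half-saddle-connection in one of the finitely many directions of $D_P\cdot\theta_0$) is countable, so the complementary set $A\subset F_{P,\infty}$ is uncountable and each of its points is generic in the sense required by the lemma.

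Second, I would verify by a diagonal argument based on Proposition \ref{c:2} that $\mathcal{R}$ is continuous on $F_{P,\infty}$: given $u_n\to u$, choose for each $n$ a periodic $u^n_k\to u_n$ with related periodic $v^n_k$, then a diagonal sequence of periodic $u^n_{k(n)}\to u$ produces related $v^n_{k(n)}$ which both converge to $\mathcal{R}(u)$ and stay arbitrarily close to $\mathcal{R}(u_n)$. Since Lemma \ref{l:7} makes $\mathcal{R}|_A$ at most $2$-to-$1$, the image $\mathcal{R}(A)$ is a continuous (essentially injective) image of the one-dimensional set $A$, and is therefore a topologically one-dimensional subset of the two-dimensional phase space $V_Q$.

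Third, let $B\subset V_Q$ be the set of $v$ without bi-infinite $S$-orbit. It decomposes into a countable union of one-dimensional arcs $C_s$, one per combinatorial half-saddle-connection $s$ in $Q$; each $C_s$ forces a fixed finite initial code on its members. I would argue that $\mathcal{R}^{-1}(B)\cap A$ is countable: the $2$-to-$1$ bound of Lemma \ref{l:7}, together with the code rigidity for non-periodic itineraries provided by Theorem \ref{t1}(ii) and the fixed prefix forced by each $s$, allows at most countably many $u\in A$ per $s$. Since $A$ is uncountable, some $u\in A$ satisfies $\mathcal{R}(u)\in V_Q\setminus B=F_Q\cap F_Q(S^{-1})$; Lemma \ref{l:6} then gives that $u$ and $v:=\mathcal{R}(u)$ are related, completing the proof.

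The main obstacle is the third step: a priori the pull-back $\mathcal{R}^{-1}(C_s)\cap A$ could be a one-dimensional subset of $A$, producing uncountably many bad $u$'s. Resolving this requires simultaneously exploiting the essential injectivity of $\mathcal{R}$ (Lemma \ref{l:7}), the code rigidity of Theorem \ref{t1}(ii), and the structural constraint that all of $A$ lies on the single uniquely ergodic invariant surface $R_{\theta_0}$, so that generically the one-dimensional image $\mathcal{R}(A)$ meets each one-dimensional arc $C_s$ transversally.
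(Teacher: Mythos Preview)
Your third step has a genuine gap, and the transversality heuristic you sketch at the end does not close it. The set $B$ of points in $V_Q$ without bi-infinite $S$-orbit is uncountable (a countable union of one-dimensional arcs), so the $2$-to-$1$ bound from Lemma~\ref{l:7} only yields that $\mathcal R^{-1}(B)$ is at most twice as large as $B$ --- still uncountable. A fixed finite code prefix on each arc $C_s$ does not help either: there are uncountably many generic $u\in A$ with any prescribed finite prefix, and Theorem~\ref{t1}(ii) constrains only full infinite itineraries. Nothing you have written prevents $\mathcal R(A)$ from lying entirely inside some arc $C_s$; the word ``transversally'' is doing work you have not justified.

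The paper's argument bypasses this difficulty with a much simpler idea. Instead of avoiding all of $B$, one avoids only the \emph{countable} set $G_Q$ of points lying on a saddle connection (i.e.\ points whose orbit hits a corner both forward \emph{and} backward). By Lemma~\ref{l:7}, $\mathcal R^{-1}(G_Q)$ is countable, so one may pick a generic $u\in F_{P,\infty}\setminus \mathcal R^{-1}(G_Q)$ and set $v=\mathcal R(u)$. Now $v\notin G_Q$ means the orbit of $v$ hits a corner at most once, say at the forward time $k\ge 0$. Replace $u$ by $u'=T^{k+1}u$ (still generic) and set $v'=\mathcal R(u')$; since $v\notin G_Q$ the obstruction cannot recur, so $v'\in F_Q$ and Lemma~\ref{l:6} gives $\sigma(u')=\sigma(v')$. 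The point is that you do not need $v$ itself to have a full forward orbit --- a single time-shift past the unique corner hit suffices, and this costs nothing because genericity is $T$-invariant.
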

\begin{proof}Recall that by our definition, the map $\mathcal R$ has its domain equal to the set $F_{P,\infty}$ of all non-exceptional points with bi-infinite orbit. In particular, it contains an uncountable set (of the full natural invariant measure) of generic points \cite{MT}. Since by Lemma \ref{l:7}(ii) the set $\mathcal R^{-1}(G_Q)$ is countable, we can take a generic point $u\in F_{P,\infty}\setminus\mathcal R^{-1}(G_Q)$, let $v=\mathcal R(u)$. We are done if $v\in F_Q$, since then by Lemma \ref{l:6}, $\sigma(u)=\sigma(v)$, i.e., $u$ and $v$ are related. If for some non-negative $k$, $\proj_1(S^kv)\in C_Q$,  we choose $u'=T^{k+1}u$ and $v'=\mathcal R(T^{k+1}u)$. Then from the fact that $v\notin G_Q$ and Lemma \ref{l:6} we get $v'\in F_Q$ and $u',v'$ are related. Moreover, the point $u'$ is (as an image of generic $u$) generic.\end{proof}

To state Proposition \ref{p:1} we need the following

\begin{assumption}\label{a1}Let $P$,$Q$ be polygons with $P$ rational, and let $u\in F_P$ and  $v\in F_Q$ be related points with $u$ non-exceptional.\end{assumption}
\begin{remark}As we have mentioned in Remark \ref{r:1} we do not assume that $\overline{\{\proj_1(S^nv)\}}_{n\ge 0}$ has a nonempty interior.\end{remark}
For a side $e$ of $P$ and $\theta\in (-\frac{\pi}{2},\frac{\pi}{2})$ we put

\begin{equation}\label{e16}I(u,e,\theta)=\{n\in\bbn\cup\{0\}\colon~\proj_1(T^nu)\in e,~\proj_2(T^nu)=\theta\}.\end{equation}

Throughout the section let $u_n=T^nu$, $x_n=\proj_1(u_n)$, $v_n=S^nv$, $y_n=\proj_1(v_n)$, $\alpha_n=\proj_2(u_n)$, $\beta_n=\proj_2(v_n)$. The set $I(u,e,\theta)$ defined for a side $e=e_i$ in (\ref{e16})
is nonempty only for $\theta$'s from the set $$\{\proj_2(T^nu): n \ge 0 \}$$
which  is
finite by Proposition \ref{p-summary} for any $u$, for rational $P$. In what follows we fix such $e$ and $\theta$.

From Proposition \ref{p-summary} $u$ is uniformly recurrent and thus from
Proposition \ref{p1}  $v$ is uniformly recurrent as well.

Obviously the set
\begin{equation*}\label{e12}\mathcal J=\mathcal J(e,\theta)=\overline{\{y_n\colon~n\in I(u,e,\theta)\}}\end{equation*}
is a perfect subset of a side $f \sim e$. The counterclockwise orientation of $\partial Q$ induces the linear ordering of $f$ and we can consider two elements $\min \mathcal J,\max \mathcal J\in f$.

Define a function $g\colon~\{y_n\}_{n\in I(u,e,\theta)}\to (-\frac{\pi}{2},\frac{\pi}{2})$ by $g(y_n)=\beta_n$.

\begin{proposition}\label{p:1}Under Assumption \ref{a1} the following is true.
\begin{itemize}\item[(i)] The function $g$ can be extended continuously to the map $$G\colon~\mathcal J\to [-\frac{\pi}{2},\frac{\pi}{2}].$$ Moreover, $G(y)\in (-\frac{\pi}{2},\frac{\pi}{2})$ for each $$y\in \mathcal J\setminus \{\min \mathcal J,\max \mathcal J\}.$$

\item[(ii)]The sequences $\{x_n\}_{n\in I(u,e,\theta)}\subset e$ and $\{y_n\}_{n\in I(u,e,\theta)}\subset f$ have the same combinatorial order due to (\ref{e:6}).
\item[(iii)]The set $\mathcal J$ is an interval.
\item[(iv)]The function $G$ is constant on $\mathcal J$.
\item[(v)]The set of directions $$\{\proj_2(S^nv)\colon~n\ge 0\}$$
along the trajectory of $v$ is finite.
\end{itemize}
\end{proposition}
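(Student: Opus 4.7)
I follow the blueprint of \cite{BT2}, modified as indicated in Remark \ref{r:1} to avoid the density hypothesis (C1). The replacement input is Proposition \ref{p-summary}(iii): since $P$ is rational and $u$ is non-exceptional, the billiard map is minimal on $R_{\proj_2(u)}$, only $2N$ directions arise in the forward orbit of $u$, and for every admissible pair $(e,\theta)$ the set $\{x_n:n\in I(u,e,\theta)\}$ is dense in $e$. Uniform recurrence of $u$ together with Proposition \ref{p1} makes $v$ uniformly recurrent as well.

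\textbf{Parts (ii) and (i).} For (ii), take $m,n,\ell\in I(u,e,\theta)$ with $x_n$ strictly between $x_m$ and $x_\ell$. All three iterates carry direction $\theta$ and so lie on parallel pieces of the unfolded trajectory on the floor $R_{\proj_2(u)}$. Apply Proposition \ref{p5}(i) to the pairs $(T^mu,T^nu)$ and $(T^\ell u,T^nu)$ together with their $Q$-companions: the common type of forward/backward symbolic separation forces the $y$-points to be in the same order on $f$, since otherwise the $v$-trajectories would intersect before symbolic separation, contradicting Proposition \ref{p5}(ii). For the continuous extension in (i), I would argue as in the proof of Proposition \ref{c:2}. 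If $y_{n_k}\to y\in\mathcal{J}$ while $\beta_{n_k}$ admits two subsequential limits $\beta\neq\beta'$ along auxiliary sequences $n_k,m_k$, then Proposition \ref{p:4} applied to $S$ and $S^{-1}$ bounds by $m(|\beta-\beta'|/2)$ the forward and backward symbolic separation times between $v_{n_k}$ and $v_{m_k}$, while on the $P$-side the common-direction, same-floor structure keeps the corresponding $u$-codes agreeing for arbitrarily many steps, contradicting $\sigma(u)=\sigma(v)$. The interior-value claim in (i) uses Theorem \ref{dense} to sandwich an interior $y\in\mathcal{J}$ between related periodic pairs whose $Q$-directions stay bounded away from $\pm\pi/2$, incompatibly with $G(y)=\pm\pi/2$.

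\textbf{Parts (iv), (iii), (v).} For (iv) fix $m,n\in I(u,e,\theta)$; approximating $u_m,u_n$ by periodic points via Theorem \ref{dense} and invoking Proposition \ref{c:2} yields related $Q$-periodic companions whose angles converge to $G(y_m),G(y_n)$. Should $G(y_m)\neq G(y_n)$, the two nearby families of $Q$-trajectories so obtained would eventually cross each other before the symbolic separation of the corresponding $u$-pair, contradicting Proposition \ref{p5}(ii); hence $G$ is constant on $\{y_n:n\in I(u,e,\theta)\}$, and by (i) constant on $\mathcal{J}$. Part (iii) then follows from (ii), the density of $\{x_n\}$ in $e$, and the observation that (iv) combined with continuity turns the correspondence $x_n\mapsto y_n$ into a piecewise isometry whose image cannot possess gaps without forcing a discontinuity in $G$; thus $\mathcal{J}$ is an interval of $f$. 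Finally, (v) is immediate: only finitely many pairs $(e,\theta)$ appear along the forward orbit of $u$ by Proposition \ref{p-summary}(iii), and (iv) assigns each such pair a single $\beta$-value, so $\{\proj_2(S^nv):n\ge 0\}$ is finite.

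\textbf{Main obstacle.} The real difficulty is (iv): whereas the earlier items are local (continuity), semi-local (order preservation), or cardinal (density in $e$), constancy of $G$ is a global rigidity statement about $\mathcal{R}$ restricted to one floor of $R_{\proj_2(u)}$. The sketched argument depends on the ability to align the unfolded, parallel-line $u$-picture with the distorted $Q$-picture and to rule out crossings through the intersection-vs-separation dichotomy of Proposition \ref{p5}. Executing this cleanly requires selecting the approximating periodic pairs so that the dichotomy applies at the appropriate symbolic resolution uniformly in the separation parameter, and carefully handling near-corner configurations where $G$-values approach $\pm\pi/2$.
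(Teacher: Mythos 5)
The paper's proof of this proposition is much shorter than yours because it defers parts (i), (ii), (iv), (v) to \cite{BT2} verbatim and supplies a new argument only for (iii). Your sketches of (i), (ii), (iv), (v) are plausible reconstructions of the \cite{BT2} blueprint, but the one item you actually had to prove from scratch --- (iii) --- is where your proposal goes wrong, and in a way that cannot be repaired by minor adjustments.

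You derive (iii) from (ii), density of $\{x_n\}$ in $e$, and the observation that (iv) plus continuity ``turns the correspondence $x_n\mapsto y_n$ into a piecewise isometry whose image cannot possess gaps without forcing a discontinuity in $G$.'' Both halves of that observation are unjustified. There is no reason for $x_n\mapsto y_n$ to be a piecewise isometry: $P$ and $Q$ carry different flat metrics, and the relation between the two codings is purely combinatorial (Proposition~\ref{p5}), not metric. More importantly, the implication ``$\mathcal J$ has a gap $\Rightarrow$ $G$ is discontinuous'' is simply false: $G$ is \emph{constant} on $\mathcal J$ by (iv), hence trivially continuous, and a constant function extends continuously over a Cantor set just as well as over an interval. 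So nothing in your argument rules out $\mathcal J$ being a Cantor subset of the side $f$. You have not used any ingredient that distinguishes ``dense orbit of a single point in $Q$'' from ``lamination of parallel trajectories'' --- but that distinction is exactly what (iii) is about.

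The paper's actual proof closes this gap by invoking Theorem~\ref{t1}(ii), the uniqueness of a point with a prescribed non-periodic forward itinerary, and this is the ingredient your proposal never uses. Concretely: if $\mathcal J$ had a gap $(\tilde y,y')$, one picks $x_{n(2k)}\nearrow x'\swarrow x_{n(2k+1)}$ on $e$ with $y_{n(2k)}\nearrow\tilde y<y'\swarrow y_{n(2k+1)}$ on $f$, sets $u'=(x',\theta)$ (non-exceptional, hence with a one-sided infinite orbit). Step~I shows $G(\tilde y)=G(y')$ via Proposition~\ref{p:4} exactly as in your part (i). Step~II then shows that for \emph{every} $y^*\in(\tilde y,y')$ the point $v^*=(y^*,G(y'))$ lies in $F_Q$ and has $\sigma(v^*)=\sigma(u')$: any symbolic disagreement at time $m$ would, by Step~I and continuity, force $\sigma_m(v_{n(2k)})\neq\sigma_m(v_{n(2k+1)})$ for large $k$, contradicting that the $u_n$'s and $v_n$'s are related. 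Hence two distinct points $(y^*,G(y'))$ and $(y^{**},G(y'))$ share the same non-periodic code, contradicting Theorem~\ref{t1}(ii). Without this uniqueness argument, the deduction that $\mathcal J$ is an interval does not go through, and your proposal is missing precisely this step.
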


\begin{proof}For the proofs of (i)-(ii) and (iv)-(v) - see \cite{BT2}. Assume that the conclusions (i),(ii) are true and $\mathcal J$ is not an interval. Since $P$ is rational, we know from  Proposition \ref{p-summary} that the sequence $\{x_n\}_{n\in I(u,e,\theta)}$ is dense in $e$. Using (ii) we can consider two sequences $\{n(2k)\}_k$ and $\{n(2k+1)\}_k$ such that in $e$, resp.\ $f$
\begin{equation}\label{e:4}x_{n(2k)}\nearrow \tilde x=x'\swarrow x_{n(2k+1)},\text{ resp.\ }y_{n(2k)}\nearrow \tilde y<y'\swarrow y_{n(2k+1)}.
\end{equation}
Now, since $u$ is non-exceptional, the point $u'=(x',\theta)$ is also non-exceptional and either $u'\in F_P(T)$ or  $u'\in F_P(T^{-1})$. Without loss of generality assume the first possibility.

\noindent {\bf I.} Using (i) we see that $\tilde v=(\tilde y,G(\tilde y))$, $v'=(y',G(y'))$ are defined well. Let us show that $G(\tilde y)=G(y')$. If not, for some positive
$\delta$ and each sufficiently large $k$ we get (as above, $v_n=(y_n,\beta _n)$)
\begin{equation*}\vert \beta_{n(2k)}-\beta_{n(2k+1)}\vert>\delta\end{equation*}
and for an $m(\delta)$ guaranteed by Proposition \ref{p:4} and each large $k$
\begin{equation*}(\sigma_0(v_{n(2k)}),\dots,\sigma_m(v_{n(2k)}))\neq (\sigma_0(v_{n(2k+1)}),\dots,\sigma_m(v_{n(2k+1)})),\end{equation*}
what contradicts our choice of related \begin{equation*}\label{e:5}u_{n(k)}=(x_{n(k)},\theta),~v_{n(k)}=(y_{n(k)},\beta_{n(k)}).\end{equation*}

\noindent {\bf II.} Choose arbitrary $y^*\in (\tilde y,y')$ and show that for $v^*=(y^*,G(y'))$, $v^*\in F_Q$ and $\sigma(v^*)=\sigma(u')$. If to the contrary either $\proj_1(S^mv^*)\in C_Q$ or $\sigma_m(v^*)\neq\sigma_m(u')$ then by {\bf I.} also $\sigma_m(v_{n(2k)})\neq \sigma_m(v_{n(2k+1)})$ for each sufficiently large $k$, what contradicts our choice of $u_n$ and related $v_n$. But then, $\sigma((y^*,G(y')))=\sigma((y^{**},G(y')))$ for any distinct $y^{*},y^{**}\in (\tilde y,y')$. This is impossible by Theorem \ref{t1}(ii).
\end{proof}

\begin{theorem}\label{abc}
Let $P,Q$ be weakly code equivalent with leaders $u,v$ with $u$ non-exceptional and $P$ rational.
Then
$v$ is non-exceptional and
 $Q$ is rational.
\end{theorem}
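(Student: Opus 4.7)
The plan is to derive both conclusions from the finiteness of the direction set of $v$ provided by Proposition \ref{p:1}(v). Since $(u,v)$ satisfies Assumption \ref{a1}, that proposition applies and gives in particular that $\{\proj_2(S^n v) : n \ge 0\}$ is finite. Moreover, being non-exceptional in rational $P$ places $u$ in case (iii) of Proposition \ref{p-summary}: $\omega(u) = R_{\proj_2(u)}$ and the billiard flow is minimal on this invariant translation surface, so for each side $e_i \subset \partial P$ and each direction $\theta$ in the orbit of $u$, the footpoints $\{x_n : n \in I(u, e_i, \theta)\}$ are dense in $e_i$.

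To prove $Q$ is rational, I argue by contradiction. Suppose $Q$ is irrational. Since $v \in F_Q$ is non-periodic (by the weak code equivalence assumption) and visits only finitely many floors, Theorem \ref{t-irrat}(i) forces $\proj_2(v)$ to be exceptional, and Theorem \ref{t-irrat}(ii) then yields that $\overline{\orb}(v)$ is a flat strip $\mathcal{T}$ in $Q$. A contradiction is then extracted from Proposition \ref{p:1}: parts (ii), (iii), and (iv) describe a continuous, order-preserving correspondence between the $u$-footpoints on $e_i$ in direction $\theta$ and the $v$-footpoints on $f_i$ in the constant direction $G(\mathcal{J}(e_i,\theta))$. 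Combined with the density of the $u$-footpoints and iterated over the finitely many directions, this forces $\overline{\orb}(v)$ to project onto $\partial Q$ densely in each of its directions, which the flat strip $\mathcal{T}$, being a proper invariant subset of $V_Q$ bounded by saddle connections emanating from corners of $Q$, cannot accommodate.

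Once $Q$ is known to be rational, apply Proposition \ref{p-summary} to $v \in F_Q$. Non-periodicity of $v$ excludes case (i); case (ii) (a flat strip) is ruled out by the same argument as above applied in the rational setting; hence case (iii) holds with $\omega(v) = R_{\proj_2(v)}$ and minimal dynamics. If $\proj_2(v)$ were exceptional, the associated saddle connection would produce a proper closed invariant subset of $R_{\proj_2(v)}$ (a cylinder or flat strip invariant in direction $\proj_2(v)$), contradicting minimality. Therefore $v$ is non-exceptional, completing the proof.

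The main obstacle is the flat-strip contradiction in the second paragraph: combinatorial order preservation together with density of $\{x_n\}$ in $e_i$ does not obviously force $\mathcal{J}(e_i,\theta) = f_i$ (a proper subinterval is formally allowed), so the conflict with the bounding saddle connections of $\mathcal{T}$ must be drawn more carefully, most likely by leveraging the rigidity of $G$ from Proposition \ref{p:1}(i) and (iv) together with the code-separation statement Proposition \ref{p:4}, to rule out the possibility that the $v$-footpoints cluster in a proper subinterval of $f_i$ while $u$-footpoints are dense in $e_i$.
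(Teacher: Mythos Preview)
The paper's own proof is a one-line deferral to \cite[Theorem~5.3]{BT2}, noting only that Proposition~\ref{p:1}(v) supplies the finiteness of directions that proof needs; so a line-by-line comparison is not possible from this paper alone. Your opening is correct---Assumption~\ref{a1} holds, so Proposition~\ref{p:1} applies and in particular the direction set of $v$ is finite---and invoking Theorem~\ref{t-irrat} to land in a flat strip if $Q$ were irrational is a reasonable reconstruction of the strategy. But the gap you flag is real, and your proposed repair does not close it.

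Proposition~\ref{p:1}(ii)--(iii) gives an order-isomorphism from the dense set $\{x_n:n\in I(u,e_i,\theta)\}\subset e_i$ onto a dense subset of an interval $\mathcal J(e_i,\theta)\subset f_i$, but nothing in (i)--(iv) forces $\mathcal J(e_i,\theta)=f_i$; a proper subinterval is fully consistent with all four conclusions, so you cannot infer that the $v$-footpoints are dense in $\partial Q$, and the flat-strip contradiction does not follow along this line. The same gap recurs when you try to exclude case~(ii) of Proposition~\ref{p-summary} after $Q$ is shown rational. The argument in \cite{BT2} does not go through $\mathcal J=f$. The decisive use of Proposition~\ref{p:1}(iv) is rather that the direction of $S^nv$ is determined by the pair $(\sigma_n(u),\proj_2(T^nu))$; combined with the billiard reflection law and the fact (Proposition~\ref{p-summary}(iii)) that $u$ realizes every side--direction pair on its invariant surface, one shows this direction actually depends only on $\proj_2(T^nu)$ and intertwines the side-reflection groups of $P$ and $Q$. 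Each composition of two side-reflections in $Q$ must then fix a direction, hence have finite order, so every angle of $Q$ is a rational multiple of $\pi$. Non-exceptionality of $v$ is handled by a separate argument in \cite{BT2}, again without the claim $\mathcal J=f$.
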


\begin{proof} Applying Theorm \ref{p:1}(v) we can use literally the same proof as in \cite[Theorem 5.3]{BT2}.\end{proof}

\begin{corollary}\label{t8}Let $P$, $Q$, with $P$ rational be homotopically equivalent. Then
\begin{itemize}\item[(i)]I if $u,v$ are related and $u$ is non-exceptional then also $v$ is non-exceptional.
\item[(ii)] $Q$ is rational.
\end{itemize}
\end{corollary}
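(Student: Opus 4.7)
The plan is to reduce both conclusions to a single application of Theorem \ref{abc}, using Lemma \ref{c:3} to produce the required leader in part (ii) and exploiting the rigidity of symbolic codes established by Theorem \ref{t1}(i) to pass from ``related'' to ``weakly code equivalent''.

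For (i), suppose that $u\in F_P$ and $v\in F_Q$ are related with $u$ non-exceptional. The first step is to observe that $u$ must be non-periodic: since $P$ is rational and $u\in F_P$ is non-exceptional, Proposition \ref{p-summary} forces $u$ into case (iii), so $u$ cannot be periodic. Next I want to promote this to $v$ being non-periodic. If $v$ were periodic, then $\sigma(v)$ would be a periodic sequence, and hence $\sigma(u)=\sigma(v)$ would be periodic as well; but then Theorem \ref{t1}(i) would place $u\in X(\sigma(u))$ into the periodic orbit associated with $\sigma(u)$, contradicting non-periodicity of $u$. Thus both $u$ and $v$ are non-periodic related points with identical codes, which is precisely the leader condition (C) of Definition \ref{d:1}. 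This exhibits $P$ and $Q$ as weakly code equivalent with leaders $u,v$, with $u$ non-exceptional and $P$ rational; Theorem \ref{abc} then gives that $v$ is non-exceptional.

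For (ii), one must first produce any pair of related points with $u$ non-exceptional in order to have a leader to feed into Theorem \ref{abc}. This is exactly what Lemma \ref{c:3} provides, and it is the only place where homotopical equivalence of $P$ and $Q$ is genuinely used (through the construction of the map $\mathcal R$). Given such a pair, the argument of the previous paragraph shows that both leaders are non-periodic and non-exceptional, so $P\equiv_{\text{w-code}}Q$ and Theorem \ref{abc} yields rationality of $Q$.

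The main obstacle here is essentially bureaucratic rather than substantive: one must verify that the related points supplied by Lemma \ref{c:3} really satisfy the hypotheses of Theorem \ref{abc}, in particular that one can rule out periodicity of $v$ without any circular appeal to rationality of $Q$ (which is what we are trying to prove). The argument above handles this cleanly by using only Theorem \ref{t1}(i), which holds for arbitrary polygons and makes no rationality assumption on the ambient table. Once this point is dispatched, the corollary is an immediate combination of Lemma \ref{c:3} and Theorem \ref{abc}.
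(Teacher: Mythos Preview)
Your proposal is correct and follows essentially the same approach as the paper's own proof, which reduces both parts to Theorem \ref{abc} (using Lemma \ref{c:3} to supply the related pair for part (ii)). Your version is simply more explicit in checking that the related points $u,v$ are non-periodic---hence genuine leaders for weak code equivalence---a detail the paper leaves implicit.
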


\begin{proof} This follows from Theorem \ref{abc} since we have seen in Lemma \ref{c:3} that there are related points $u\in F_P$ and $v\in F_Q$ with $u$ non-exceptional.\end{proof}

\begin{corollary}\label{t7}Let $P$, $Q$ with $P$ rational be homotopically equivalent. Then ($Q$ is rational and) there are generic related points.
\end{corollary}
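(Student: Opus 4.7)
The plan is to simply combine Lemma \ref{c:3} with Corollary \ref{t8}. Lemma \ref{c:3}, applied directly under the hypotheses of Corollary \ref{t7}, already produces a pair of related points $u\in F_P$ and $v\in F_Q$ with $u$ generic. Because a generic point is non-exceptional by definition, Assumption \ref{a1} is satisfied for this particular pair $(u,v)$, so everything downstream of that assumption is available.

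Next I would invoke Corollary \ref{t8}(ii) (whose proof in turn goes through Theorem \ref{abc}) to conclude that $Q$ is rational, which is the parenthetical statement of the corollary. Corollary \ref{t8}(i) additionally gives that the related $v$ is non-exceptional, so that genericity in the sense defined for rational polygons is now even a meaningful notion for $v\in V_Q$. The pair $(u,v)$ produced by Lemma \ref{c:3} then witnesses the existence of ``generic related points'' asserted by Corollary \ref{t7}.

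There is essentially no obstacle here: the corollary is a bookkeeping step that records the combined output of Lemma \ref{c:3}, Theorem \ref{abc}, and Corollary \ref{t8}. The only mild subtlety, which would arise if one reads ``generic related points'' as requiring $v$ itself to be generic in $Q$ and not merely non-exceptional, is to transport the unique-ergodicity clause in the definition of generic from the skeleton $V_P\cap R_{\proj_2(u)}$ to $V_Q\cap R_{\proj_2(v)}$. This would proceed via Proposition \ref{p:1}(v), which guarantees that the direction set of $v$ is finite so that $v$ indeed lies on an invariant surface of $Q$, together with the three-case trichotomy of Proposition \ref{p-summary} applied to $v$; the periodic case is ruled out because $\sigma(v)=\sigma(u)$ is non-periodic (since $u$ is generic), and the flat-strip case would contradict the equidistribution of $u$ in $R_{\proj_2(u)}$ through the combinatorial order statement of Proposition \ref{p:1}(ii).
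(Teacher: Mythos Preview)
Your reduction to Lemma \ref{c:3} and Corollary \ref{t8} correctly yields $Q$ rational together with a related pair $(u,v)$ with $u$ generic. But the paper's phrase ``generic related points'' means that \emph{both} leaders are generic; otherwise Corollary \ref{t7} would add nothing to Lemma \ref{c:3} plus Corollary \ref{t8}, and the paper would not give it a separate proof. So the ``mild subtlety'' you flag is in fact the entire content of the corollary, and your sketch for it has real gaps.

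First, the $v$ coming out of Lemma \ref{c:3} lies in $F_Q$ but is not known to have a bi-infinite orbit, which the definition of generic requires. Second, and more importantly, your trichotomy argument at best places $v$ in case (iii) of Proposition \ref{p-summary}, which gives minimality on $R_{\proj_2(v)}$; it says nothing about unique ergodicity of the billiard map on the skeleton $V_Q\cap R_{\proj_2(v)}$, and minimality does not imply unique ergodicity. (Your flat-strip exclusion via Proposition \ref{p:1}(ii) is also not quite right: that statement compares the \emph{order} of the footpoints $\{x_n\}$ and $\{y_n\}$, not their density, so it does not by itself prevent $\mathcal J$ from being a proper subinterval of $f$.)

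The paper proceeds differently on both points. After applying Proposition \ref{p:1} to see that $G\equiv\vartheta$ is constant on the interval $\mathcal J$, it passes to a \emph{new} related pair: choose $y'\in\mathcal J$ outside a countable bad set so that $v'=(y',\vartheta)$ has bi-infinite orbit, and simultaneously $x'\in e$ outside a countable set so that $u'=(x',\theta)$ is generic on the same surface $R_\theta$; relatedness of $u',v'$ is checked by approximating along a monotone sequence via Proposition \ref{p:1}(ii). Unique ergodicity on $K_Q$ is then obtained not from the trichotomy but by using $\mathcal R$ restricted to $K_P$ together with Lemma \ref{l:6} (and the almost-injectivity of Lemma \ref{l:7}) to push forward the unique invariant measure $\mu$ on $K_P$ to $\nu=\mathcal R^*\mu$ on $K_Q$; this is what makes $v'$ generic.
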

\begin{proof}As a result of our construction of the map $\mathcal R$ and Lemma \ref{c:3} we can consider related points $u,v$ with $u$ generic. Then Proposition \ref{p:1} applies and the map $G\equiv\vartheta$ is constant on the interval $\mathcal J$. Since $G$ is constant, there is a countable subset $\mathcal J_0$ of $\mathcal J$ such that each point
from $(\mathcal J\setminus \mathcal J_0)\times\vartheta$ has a bi-infinite
trajectory (either the forward or backward trajectory starting from
any point of $\mathcal J_0\times\vartheta$ finishes in a corner of $Q$).

The analogous argument says that $(x,\theta)$ is generic (on the same surface $R_{\theta}$) with bi-infinite trajectory for each $x\in e\setminus e_0$, where $e_0$ is countable - see the definition of the set $\mathcal J$. Using Proposition \ref{p:1}(ii) one can consider a sequence $\{n(k)\}_k$ such that in $e$, resp.\ $f$
\begin{equation*}x_{n(k)}\nearrow x'\in e\setminus e_0,\text{ resp.\ }y_{n(k)}\nearrow y'\in\mathcal J\setminus\mathcal J_0
\end{equation*}
and the points $u'=(x',\theta)$, $v'=(y',\vartheta)$ are related. Then $v'$ is non-exceptional by Corollary \ref{t8}(i) and using the map $\mathcal R$ restricted to the skeleton $K_P$, by Lemma \ref{l:6} there is a unique invariant measure $\nu=\mathcal R^*\mu$ on the skeleton $K_Q$, so also the point $v'$ is generic.\end{proof}

\section{The Results}

We define one more equivalence relation.  Suppose $P,Q$ are polygons with
$P$ rational.  Remembering that the notion $N_P$ was defined in Proposition \ref{p-summary}, we  define $P \equiv_{\text{sim}} Q$ by
\begin{enumerate}
\item{}  if $N_P\ge 3$, $Q$ is similar to $P$;
\item{}  if $N_P = 2$, $Q$ is affinely similar to $P$.
\end{enumerate}

\begin{theorem}\cite{BT2}\label{converse}
Suppose $P,Q$ are code equivalent with leaders $u,v$; $P$ rational, $u$ generic.  Then $P,Q$ are order equivalent with leaders $u,v$.
\end{theorem}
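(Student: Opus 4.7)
Condition (O1) is literally (C1), so the entire content lies in the combinatorial-order condition (O2): that the sequences $x_n:=\proj_1(T^nu)$ and $y_n:=\proj_1(S^nv)$ satisfy $x_k\in[x_l,x_m]\iff y_k\in[y_l,y_m]$ for every triple $k,l,m$. Since $\sigma_n(u)=\sigma_n(v)$ we have $x_n\in e_i$ iff $y_n\in f_i$, so triples landing on three distinct sides are handled automatically by the common cyclic labeling. The task reduces to showing that for every side $e\sim f$ and any two indices $m,n$ with $x_m,x_n\in e$ the counterclockwise order of $x_m,x_n$ on $e$ agrees with that of $y_m,y_n$ on $f$.

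Because $u$ is generic (hence non-exceptional) and $P$ is rational, Theorem \ref{abc} provides that $Q$ is rational and $v$ is non-exceptional, so Proposition \ref{p:1} applies. By Proposition \ref{p-summary}(iii) the orbit of $u$ enters $e$ with exactly $N_P$ directions $\theta_1,\ldots,\theta_{N_P}$, each producing a footpoint set dense in $e$. Fixing $\theta_j$, Proposition \ref{p:1}(ii) gives combinatorial-order preservation of the restricted correspondence $x_n\mapsto y_n$ for $n\in I(u,e,\theta_j)$, and items (i), (iii), (iv) extend this by density to a continuous monotone surjection $\phi_j:e\to\mathcal J_j\subset f$, with the image carrying the single direction $\vartheta_j$. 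The footpoint density of $\{y_n:x_n\in e\}$ in $f$ coming from (C1) forces $\bigcup_{j=1}^{N_P}\mathcal J_j=f$.

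The crux, and the main obstacle, is to promote the direction-by-direction order preservation to order preservation on all of $e$, equivalently, to show that the $\phi_j$'s all coincide with a single continuous monotone bijection $\phi:e\to f$ (so that in particular $\mathcal J_j=f$ for every $j$). Suppose otherwise: there exist an interior point $x\in e$ and indices $j_1\neq j_2$ with $\phi_{j_1}(x)\neq\phi_{j_2}(x)$, and by continuity the gap persists on a neighborhood $U$ of $x$. Using density of each direction class in $e$, select same-direction pairs $m,m^\ast\in I(u,e,\theta_{j_1})$ together with $n,n^\ast\in I(u,e,\theta_{j_2})$ whose footpoints all lie in $U$ and whose mutual order in $e$ may be freely prescribed. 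Proposition \ref{p5}(ii) applied to the same-direction pairs $(u_m,u_{m^\ast})$ and $(u_n,u_{n^\ast})$ forbids a pre-separation geometric intersection of $(v_m,v_{m^\ast})$ and of $(v_n,v_{n^\ast})$ in $Q$; however, for a suitable choice of orders among the $x$'s the persistent gap $\phi_{j_1}-\phi_{j_2}$ forces exactly such a pre-separation intersection between segments issuing from $v_m$ and $v_n$, by propagating with the parallel companions $v_{m^\ast},v_{n^\ast}$. The contradiction yields $\phi_{j_1}\equiv\phi_{j_2}$, hence the desired single homeomorphism $\phi$.

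With the single map $\phi:e\to f$ in hand on each side $e$, monotonicity of the footpoint correspondence on every side combines with the side-level compatibility supplied by $\sigma(u)=\sigma(v)$ to yield (O2), so that $P$ and $Q$ are order equivalent with the same leaders $u,v$. The delicate step is the cross-direction coincidence $\phi_{j_1}=\phi_{j_2}$: it is precisely there that genericity of $u$ enters, since it is through Proposition \ref{p-summary}(iii) that we are entitled to the density of \emph{each} direction class in each side $e$, and it is Proposition \ref{p5}(ii) that converts that density into the prohibition of the anomalous intersections needed to rule out the alternative.
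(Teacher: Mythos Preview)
The paper does not actually prove this theorem; it simply cites \cite{BT2} and uses the result as a black box. So there is no ``paper's own proof'' to compare against beyond that citation.

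As for your argument, the overall architecture is sound---reducing (O2) to same-side order preservation, invoking Proposition~\ref{p:1}(ii) to get direction-by-direction monotone maps $\phi_j:e\to\mathcal J_j$, and identifying the coincidence $\phi_{j_1}=\phi_{j_2}$ as the crux---but the crux itself is not established. Proposition~\ref{p5}(ii) only says: if $T^mu$ and $T^nu$ have the \emph{same} direction in $P$, then $S^mv,S^nv$ cannot intersect before symbolic separation in $Q$. Applied to your setup it yields only that the same-direction pairs $(v_m,v_{m^\ast})$ and $(v_n,v_{n^\ast})$ do not intersect before separation. Your claimed contradiction, however, is an intersection ``between segments issuing from $v_m$ and $v_n$,'' which is a \emph{cross}-direction pair and is not forbidden by Proposition~\ref{p5}(ii). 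The phrase ``propagating with the parallel companions'' does not bridge this gap: parallelism of $v_m,v_{m^\ast}$ (and of $v_n,v_{n^\ast}$) in $Q$ does not by itself force either same-direction pair to cross before separation, regardless of how the footpoints are ordered.

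There is also a misidentification of where genericity enters. The density of each direction class $\{x_n:n\in I(u,e,\theta_j)\}$ in $e$ follows already from $u$ being non-exceptional via Proposition~\ref{p-summary}(iii); it does not require genericity. Genericity in this paper means unique ergodicity of the billiard map on the skeleton $K_P$, and your argument never invokes that property. In \cite{BT2} the passage from per-direction order preservation to global order preservation uses unique ergodicity in an essential, measure-theoretic way (comparing visit frequencies to subintervals on the $P$ and $Q$ sides), which is what forces the maps $\phi_j$ to be affine and hence to agree. Your sketch substitutes a geometric intersection argument that, as written, does not close.
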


\begin{theorem}\cite{BT1}\label{o-similar} Let $P,Q$ be order equivalent with leaders $u,v$; $P$ rational, $u$ generic. Then $Q \equiv_{\text{sim}} P$.
\end{theorem}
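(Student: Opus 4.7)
The plan is to build an explicit similarity from $P$ to $Q$ using the dynamical data carried by the leader $u$. First, from the density condition (O1) and the combinatorial matching (O2), I would define a map $\phi\colon\partial P\to\partial Q$ by $\phi(\proj_1(T^nu))=\proj_1(S^nv)$. The order-preservation forces $\phi$ to extend uniquely to a cyclic-order-preserving homeomorphism between the two boundaries; after a cyclic relabeling of the sides of $Q$, we may assume $\phi(e_i)=f_i$ for every $i$.

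Next, I would use rationality and genericity to upgrade $\phi$ to a piecewise affine map. By Proposition \ref{p-summary}(iii), the orbit of $u$ is equidistributed on the invariant translation surface $R_{\proj_2(u)}$ with respect to the unique invariant measure of the uniquely ergodic billiard map; in particular, for each side $e_i$ and each $\theta$ in the finite $D_N$-orbit of $\proj_2(u)$, the subsequence of footpoints $\{\proj_1(T^nu)\colon~\sigma_n(u)=i,\ \proj_2(T^nu)=\theta\}$ is equidistributed in $e_i$ with respect to Lebesgue measure (weighted by $\sin\theta$). Since $\phi$ preserves order on the dense orbit, it pushes this equidistribution forward to the analogous equidistribution on $f_i$, which forces $\phi|_{e_i}$ to be an affine homeomorphism onto $f_i$ with some ratio $\lambda_i>0$.

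The main obstacle is then to show that the $\lambda_i$ collapse to a single constant (or, when $N_P=2$, to two constants corresponding to horizontal and vertical sides) and that the interior angles of $Q$ at corresponding vertices agree with those of $P$. For this I would analyze maximal cylinders of closed orbits in $R_{\proj_2(u)}$: such a cylinder is bounded by saddle loops whose combinatorial codes are common to $P$ and $Q$, and Theorem \ref{dense} supplies periodic orbits arbitrarily close to any generic trajectory, whose widths and lengths yield quantitative relations between neighboring ratios $\lambda_i,\lambda_{i+1}$. Combining these cylinder relations with the $D_N$-symmetry (rotation by $\pi/N$ around each vertex must intertwine the reflections on adjacent sides), when $N_P\ge 3$ the constraints are rigid enough to force all $\lambda_i$ equal and all vertex angles of $Q$ equal to those of $P$, yielding similarity; when $N_P=2$ only horizontal and vertical reflections survive so one can only match two scalings independently, yielding affine similarity. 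I expect the delicate step to be the quantitative cylinder bookkeeping: translating the combinatorial identity of closed orbits in $P$ and $Q$ into the precise linear relation on the $\lambda_i$ needed to globalize the local affine data produced in the previous paragraph.
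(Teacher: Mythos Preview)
This theorem is not proved in the present paper; it is quoted from \cite{BT1} and used as a black box in the proof of Theorem~\ref{compare}. There is therefore no proof here against which to compare your sketch.

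For what it is worth, your first two steps are in the right spirit: one does manufacture an order-preserving boundary map $\phi$ from the leaders via (O1)--(O2), and unique ergodicity of the generic direction on the $P$ side is exactly what forces $\phi\vert_{e_i}$ to be affine. The part you correctly flag as the crux---pinning down the angles of $Q$ and collapsing the side ratios $\lambda_i$---is where the real content lies, and your ``cylinder bookkeeping'' remains a hope rather than an argument: you have not said what relation a common periodic code actually imposes on $\lambda_i$ and $\lambda_{i+1}$, nor why the $D_N$-symmetry on the $P$ side should transport to $Q$ before you know the angles of $Q$. If you want to turn this into a proof you should consult \cite{BT1} directly; the present paper does not supply the missing ingredients.
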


\begin{theorem}\label{compare} Let $P,Q$ be polygons, $P$ rational, then
the following are equivalent.
\begin{enumerate}
\item
$P\equiv_{\text{hom}} Q$
\item $P\equiv_{\text{ord}} Q$ with leaders $u,v$;  $u$ generic
\item $P\equiv_{\text{code}} Q$ with leaders $u,v$; $u$ generic
\item $P\equiv_{\text{w-code}} Q$ with leaders $u,v$; $u$ generic
\item $P\equiv_{\text{sim}} Q$.
\end{enumerate}
\end{theorem}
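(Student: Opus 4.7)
The plan is to close the chain $(1) \Rightarrow (4) \Rightarrow (3) \Rightarrow (2) \Rightarrow (5) \Rightarrow (1)$; essentially all the technical work is already done, so the proof should amount to a synthesis of the preceding results.

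For $(1) \Rightarrow (4)$, I invoke Corollary \ref{t7}: homotopical equivalence with $P$ rational yields generic related points $u \in F_P$ and $v \in F_Q$, i.e.\ $\sigma(u) = \sigma(v)$ with $u$ generic. Since genericity includes being non-exceptional with bi-infinite orbit, $u$ falls in case (iii) of Proposition \ref{p-summary} and hence is non-periodic; consequently $\sigma(u) = \sigma(v)$ is non-periodic, so Theorem \ref{t1}(i) forces $v$ to be non-periodic as well. This yields condition (C) with $u$ generic. For $(4) \Rightarrow (3)$, $u$ generic is in particular non-exceptional, so Theorem \ref{abc} gives that $Q$ is rational and $v$ is non-exceptional. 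Applying Proposition \ref{p-summary}(iii) to both $u \in F_P$ and $v \in F_Q$ (the final sentence of that proposition guarantees case (iii) for any non-exceptional point in a rational polygon), the forward omega-limit of each point contains $R_{\proj_2(u)}$, resp.\ $R_{\proj_2(v)}$; projecting to the boundary then gives density of $\{\proj_1(T^n u)\}_{n \ge 0}$ in $\partial P$ and of $\{\proj_1(S^n v)\}_{n \ge 0}$ in $\partial Q$, verifying (C1).

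The implication $(3) \Rightarrow (2)$ is Theorem \ref{converse}, and $(2) \Rightarrow (5)$ is Theorem \ref{o-similar}. For $(5) \Rightarrow (1)$, suppose first $N_P \ge 3$ and $Q$ is similar to $P$ via a map $\phi$. Then $\phi$ is an angle-preserving homeomorphism carrying sides bijectively to sides, so it induces a cyclic labeling of $Q$ matching that of $P$, and carries billiard trajectories of $P$ onto billiard trajectories of $Q$; closed orbits map to closed orbits, and the induced homeomorphism of pillowcases sends free homotopy classes bijectively to free homotopy classes, so $D(P) = D(Q)$. If $N_P = 2$, all sides of $P$ and $Q$ are horizontal or vertical and an affine similarity is a composition of a similarity with a diagonal linear map; since such a diagonal map commutes with reflections across horizontal and vertical lines, it still sends billiard orbits to billiard orbits and the preceding argument applies.

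The main obstacle is not in this final assembly but in the earlier machinery it relies upon: the engine of $(1) \Rightarrow (4)$ is Corollary \ref{t7}, which in turn rests on the construction of the map $\mathcal{R}$, Lemma \ref{l:7}, Proposition \ref{p:1} and Theorem \ref{abc}. The only genuinely new verification at this stage is $(5) \Rightarrow (1)$, and there the affine case deserves the brief remark that $\mathrm{diag}(a,b)$ preserves the law of reflection at horizontal and vertical mirrors, hence carries billiard orbits to billiard orbits.
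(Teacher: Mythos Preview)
Your proof is correct and follows essentially the same route as the paper: the paper also derives $(1)\Rightarrow(4)$ from Corollary~\ref{t7}, $(2)\Rightarrow(5)$ from Theorem~\ref{o-similar}, and treats $(5)\Rightarrow(1)$ as clear. The only cosmetic difference is that the paper passes directly from $(4)$ to $(2)$ by invoking Theorem~\ref{converse} (relying on Remark~\ref{r:1} that the density hypothesis (C1) is redundant under Assumption~\ref{a1}), whereas you insert an explicit $(4)\Rightarrow(3)$ step via Theorem~\ref{abc} and Proposition~\ref{p-summary}(iii) so that Theorem~\ref{converse} can be cited exactly as stated; your version is slightly more self-contained in that respect.
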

\begin{proof}
Clearly (5) implies (1), (2), (3) and (4).
Clearly (3) implies (4).
Corollary \ref{t7}  shows that (1) implies (4),
 while Theorem \ref{converse}
shows that (4) implies (2).
Finally Theorem \ref{o-similar} shows that (2) implies (5),
\end{proof}

\begin{corollary}\label{main} Let $P,Q$, $P$ rational be homotopically equivalent.
Then $Q$ is rational such that
\begin{enumerate}
\item{}  if $N(P)\ge 3$, $Q$ is similar to $P$;
\item{}  if $N(P) = 2$, $Q$ is affinely similar to $P$.
\end{enumerate}
\end{corollary}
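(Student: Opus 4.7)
The plan is a direct application of the equivalence machinery already assembled earlier in the paper. Assuming $P\equiv_{\text{hom}} Q$ with $P$ rational, I would first invoke Corollary \ref{t8}(ii) to conclude that $Q$ is also rational; this handles the first assertion of the corollary. Then, using the implication (1)$\Rightarrow$(5) in Theorem \ref{compare}, I immediately obtain $P\equiv_{\text{sim}} Q$. Unpacking the definition of $\equiv_{\text{sim}}$ given at the beginning of Section 5 yields exactly the two remaining conclusions: $Q$ is similar to $P$ when $N_P\ge 3$, and $Q$ is affinely similar to $P$ when $N_P=2$.

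In other words, the corollary is essentially a restatement of the implication (1)$\Rightarrow$(5) of Theorem \ref{compare}, combined with the rationality transfer from Corollary \ref{t8}(ii). Since both ingredients are already in place, no further argument is required at this final step, and consequently there is no real obstacle to overcome in the proof of the corollary itself. All the substantive work has been carried out in the supporting results that feed into Theorem \ref{compare}: the bijection between free homotopy classes and pillowcase codes (Corollary \ref{c:1}), the construction of the transfer map $\mathcal{R}$ and the existence of generic related points (Corollary \ref{t7}), the code-to-order implication (Theorem \ref{converse}), and the order-to-similarity implication (Theorem \ref{o-similar}). The corollary's role is simply to repackage this chain as a clean rigidity statement phrased solely in terms of the marked length spectrum domain $D(P)$.
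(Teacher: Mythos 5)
Your proposal is correct and matches the paper's intent: Corollary~\ref{main} is indeed an immediate consequence of the implication $(1)\Rightarrow(5)$ in Theorem~\ref{compare} together with the definition of $\equiv_{\text{sim}}$, and the rationality of $Q$ is supplied by Corollary~\ref{t8}(ii). The paper gives no separate proof because the corollary is precisely this repackaging, so there is nothing to add.
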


\begin{example}
Consider an arbitrary  figure $L$ billiard table. Code the sides as in Figure 1.
Then for any strictly positive integer $n$ there is a periodic billiard orbit
with code $(lr)^n (t) (lr)^n (b)$.  These orbits are pictures for $n=1$ and $n=3$ in a partial
unfolding in Figure 1. Thus it does not suffice to assume that $D(P) \cap D(Q)$ is infinite to
conclude that $P\equiv_{\text{sim}} Q$.
\end{example}

\begin{figure}[t]
\label{thefig}

\centering
\begin{tikzpicture} [scale=0.75]
\draw [] (0,0) -- (0,3) -- (3,3) -- (3,1) -- (5,1) -- (5,0) --(0,0);
   \draw (-0.5,1) node {$\ell$};
\draw  (5.5,0.5) node {$r$};
\draw (2.5,-0.5) node {$b$};
\draw (4,1.5) node {$t$};
\end{tikzpicture}

\vspace{.2in}
\centering
\begin{tikzpicture} [scale=0.45]
\draw [] (0,0) -- (0,3) -- (3,3) -- (3,1) -- (5,1) -- (5,0) --(0,0);
\draw [] (5,0) -- (5,1) -- (7,1) -- (7,3) -- (10,3) -- (10,0) --(5,0);
\draw [] (10,0) -- (10,3) -- (13,3) -- (13,1) -- (15,1) -- (15,0) --(10,0);
\draw [] (15,0) -- (15,1) -- (17,1) -- (17,3) -- (20,3) -- (20,0) --(15,0);
\draw [] (20,0) -- (20,3) -- (23,3) -- (23,1) -- (25,1) -- (25,0) --(20,0);
\draw [] (25,0) -- (25,1) -- (27,1) -- (27,3) -- (30,3) -- (30,0) --(25,0);

\draw [] (0,0.6) -- (4,1) -- (10,0.4) -- (6,0 ) -- (0,0.6);
\draw [dashed] (0,0.55) -- (13.5,1) -- (30,0.45) -- (17.5,0) -- (0,0.55);

\end{tikzpicture}

    \caption{Two periodic orbits drawn in unfolding}

\end{figure}
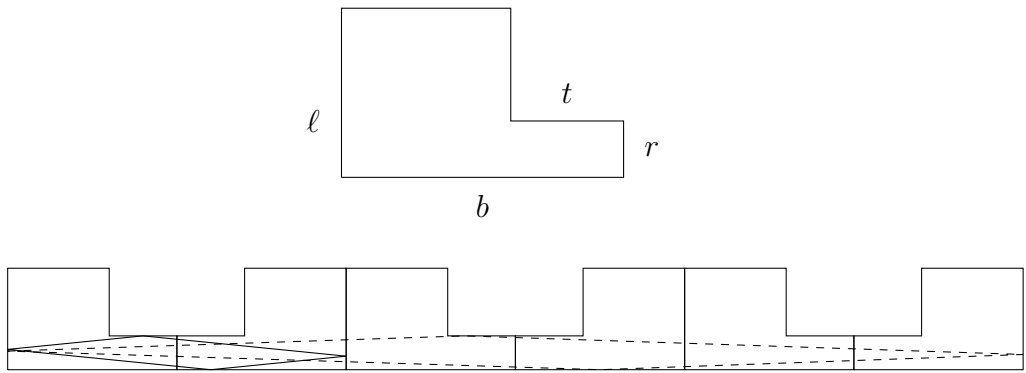

\subsection*{Acknowledgements}
The first author was partially supported by the Grant Agency of the Czech Republic contract number
201/09/0854. He also gratefully acknowledges the support of the MYES of the
Czech Republic via the contract MSM 6840770010. The second author was partially supported by ANR Perturbations.

\end{document}